\documentclass[12pt,english]{article}
\usepackage[T1]{fontenc}
\usepackage[latin9]{inputenc}
\usepackage{geometry}
\geometry{verbose,tmargin=2cm,bmargin=2cm,lmargin=2cm,rmargin=2cm,headheight=1cm,headsep=1cm,footskip=1cm}
\usepackage{mathrsfs}
\usepackage{amscd}
\usepackage{amsmath}
\usepackage{amsthm}
\usepackage{amssymb}

\makeatletter

\providecommand{\tabularnewline}{\\}

\numberwithin{equation}{section}
\numberwithin{figure}{section}
\theoremstyle{plain}
\newtheorem{thm}{\protect\theoremname}[section]
\theoremstyle{plain}
\newtheorem{lem}[thm]{\protect\lemmaname}
\theoremstyle{plain}
\newtheorem{prop}[thm]{\protect\propositionname}
\theoremstyle{plain}
\newtheorem{cor}[thm]{\protect\corollaryname}
\theoremstyle{plain}
\newtheorem{assumption}[thm]{\protect\assumptionname}
\theoremstyle{definition}
\newtheorem*{example*}{\protect\examplename}
\theoremstyle{definition}
\newtheorem{defn}[thm]{\protect\definitionname}
\theoremstyle{remark}
\newtheorem{rem}[thm]{\protect\remarkname}
\theoremstyle{plain}
\newtheorem{conjecture}[thm]{\protect\conjecturename}

\date{}

\makeatother

\usepackage{babel}
\providecommand{\assumptionname}{Assumption}
\providecommand{\conjecturename}{Conjecture}
\providecommand{\corollaryname}{Corollary}
\providecommand{\definitionname}{Definition}
\providecommand{\examplename}{Example}
\providecommand{\lemmaname}{Lemma}
\providecommand{\propositionname}{Proposition}
\providecommand{\remarkname}{Remark}
\providecommand{\theoremname}{Theorem}

\begin{document}
\title{Weyl group action on Radon hypergeometric function and its symmetry }
\author{Hironobu Kimura\\
 Department of Mathematics, Graduate School of Science and\\
 Technology, Kumamoto University}

\maketitle
\global\long\def\R{\mathbb{R}}%
 
\global\long\def\al{\alpha}%
\global\long\def\be{\beta}%
 
\global\long\def\ga{\gamma}%
 
\global\long\def\de{\delta}%
 
\global\long\def\expo{\mathrm{exp}}%
 
\global\long\def\f{\varphi}%
 
\global\long\def\W{\Omega}%
 
\global\long\def\lm{\lambda}%
\global\long\def\te{\theta}%
 
\global\long\def\C{\mathbb{C}}%
 
\global\long\def\Z{\mathbb{Z}}%
 
\global\long\def\Ps{\mathbb{P}}%
 
\global\long\def\De{\Delta}%
 
\global\long\def\cbatu{\mathbb{C}^{\times}}%
 
\global\long\def\La{\Lambda}%
 
\global\long\def\vt{\vartheta}%
 
\global\long\def\G{\Gamma}%
\global\long\def\GL#1{\mathrm{GL}(#1)}%
 
\global\long\def\gras{\mathrm{Gr}}%
 
\global\long\def\diag{\mathrm{diag}}%
 
\global\long\def\tr{\,\mathrm{^{t}}}%
 
\global\long\def\re{\mathrm{Re}}%
 
\global\long\def\im{\mathrm{Im}}%
 
\global\long\def\norm{\mathscr{N}(n)}%
 
\global\long\def\sp{\mathrm{sp}}%
 
\global\long\def\rank{\mathrm{rank}}%
 
\global\long\def\tH{\tilde{H}}%
 
\global\long\def\mat{\mathrm{Mat}}%
 
\global\long\def\lto{\longrightarrow}%
 
\global\long\def\S{\mathfrak{S}}%
 
\global\long\def\cO{\mathcal{O}}%
 
\global\long\def\gl{\mathfrak{gl}}%
\global\long\def\gee{\mathfrak{g}}%
 
\global\long\def\Tr{\,\mathrm{Tr}}%
 
\global\long\def\cS{\mathcal{S}}%
 
\global\long\def\ad{\mathrm{ad}}%
 
\global\long\def\Ad{\mathrm{Ad}}%
 
\global\long\def\ha{\mathfrak{h}}%
 
\global\long\def\fj{\mathfrak{j}}%
 
\global\long\def\mrn{\mat'(r,N)}%
 
\global\long\def\sgn{\mathrm{sgn}}%
 
\global\long\def\hlam{H_{\lambda}}%
 
\global\long\def\ghyp{\,_{2}F_{1}}%
 
\global\long\def\mnm{\mat'(m,N)}%
\global\long\def\cP{\mathcal{P}}%
 
\global\long\def\auto{\mathrm{Aut}}%
 
\global\long\def\la{\langle}%
 
\global\long\def\ra{\rangle}%
 
\global\long\def\Ai{\mathrm{Ai}}%
 
\global\long\def\adj{\mathrm{Ad}}%
 
\global\long\def\yn{\mathbf{Y}_{n}}%
 
\global\long\def\eq{\mathcal{I}}%
 
\global\long\def\hyp#1#2{\, _{#1}F_{#2}}%
\global\long\def\jro{J_{r}^{\circ}}%
 
\global\long\def\jroo{\mathfrak{j}_{r}^{\circ}}%
\global\long\def\jor#1{J^{\circ}(#1)}%
 
\global\long\def\pa{\partial}%
 
\global\long\def\bx{\mathbf{x}}%
 
\global\long\def\fa{\mathfrak{a}}%
 
\global\long\def\bb{\mathbf{b}}%
 
\global\long\def\bz{\mathbf{z}}%
 
\global\long\def\etr{\mathrm{etr}}%
 
\global\long\def\cR{\mathcal{R}}%
 
\global\long\def\cL{\mathcal{L}}%
 
\global\long\def\wm{\omega}%
 
\global\long\def\lm{\lambda}%
\global\long\def\te{\theta}%
 
\global\long\def\ep{\epsilon}%
 
\global\long\def\fl{\mathrm{Flag}}%
 
\global\long\def\vep{\varepsilon}%
 
\global\long\def\Gl{\mathrm{GL}}%
 
\global\long\def\sm{\sigma}%
 
\global\long\def\ini{\mathrm{in}_{\prec}}%
 
\global\long\def\Span{\mathrm{span}}%
 
\global\long\def\cS{\mathcal{S}}%
 
\global\long\def\cL{\mathcal{L}}%
 
\global\long\def\cF{\mathcal{F}}%
 
\global\long\def\rank{\mathrm{rank}}%
 
\global\long\def\tH{\tilde{H}}%
 
\global\long\def\mat{\mathrm{Mat}}%
 
\global\long\def\lto{\longrightarrow}%
 
\global\long\def\Si{\mathfrak{S}}%
 
\global\long\def\cO{\mathcal{O}}%
 
\global\long\def\gl{\mathfrak{gl}}%
\global\long\def\gee{\mathfrak{g}}%
 
\global\long\def\Tr{\,\mathrm{Tr}}%
 
\global\long\def\ad{\mathrm{ad}}%
 
\global\long\def\Ad{\mathrm{Ad}}%
 
\global\long\def\ha{\mathfrak{h}}%
 
\global\long\def\fj{\mathfrak{j}}%
 
\global\long\def\mrn{\mat'(r,N)}%
 
\global\long\def\sgn{\mathrm{sgn}}%
 
\global\long\def\hlam{H_{\lambda}}%
 
\global\long\def\mnm{\mat'(m,N)}%
\global\long\def\cP{\mathcal{P}}%
 
\global\long\def\herm{\mathscr{H}(r)}%
 
\begin{abstract}
For positive integers $r,n,N:=rn$, we consider the Radon hypergeometric
function (Radon HGF) associated with a partition $\lm$ of $n$ defined
on the Grassmannian $\gras(m,N)$ for $r<m<N$, which is obtained
as the Radon transform of a character of the group $H_{\lm}\subset G:=\GL N$.
We study its symmetry described by the Weyl group analogue $N_{G}(H_{\lm})/H_{\lm}$.
We consider the Hermitian matrix integral analogue of the Gauss HGF
and its confluent family, which are understood as the Radon HGF on
$\gras(2r,4r)$ for partitions $\lm$ of $4$, we apply the result
of symmetry to these particular cases and derive a transformation
formula for the Gauss analogue which is known as a part of ''24 solutions
of Kummer'' for the classical Gauss HGF. We derive a similar transformation
formula for the analogue Kummer's confluent HGF.
\end{abstract}

\section{Introduction}

This paper is a succession of the previous papers \cite{kimura-2}
and \cite{kimura-3} on the Radon hypergeometric function (Radon HGF).
The Radon HGF is an extension of the Gelfand HGF. The Gelfand HGF
was introduced in 1986 by Gelfand \cite{Gelfand} using the Radon
transform. Like as the Gelfand HGF, the Radon HGF is also defined
by the Radon transform. In \cite{kimura-2} we gave the definition
of the Radon HGF of confluent and non-confluent type, and in \cite{kimura-3}
we studied the contiguity relations of the Radon HGF. In this paper,
we discuss the symmetry of the Radon HGF which is described by an
action of a certain analogue of Weyl group as will be explained below.
This symmetry gives a transformation formula when applied to the Hermitian
matrix integral analogue of the Gauss HGF (Section \ref{subsec:exa-gauss}).

Let $r,n$ be positive integers and let $N:=nr$. Let $G=\GL N$ be
the complex general linear group. For any partition $\lm$ of $n$,
we consider the subgroup $H_{\lm}\subset G$ and a character $\chi_{\lm}(\cdot;\al)$
of the universal covering group $\tilde{H}_{\lm}$ which depends on
$\al\in\C^{n}$. When $\lm=(1,\dots,1)$, namely the partition whose
parts are all $1$, $H_{\lm}\simeq(\GL r)^{n}$ and a character is
given by 
\[
\chi_{(1,\dots,1)}(h;\al)=\prod_{1\leq j\leq n}\left(\det h_{j}\right)^{\al_{j}}
\]
with $\al=(\al_{1},\dots,\al_{n})\in\C^{n}$. Then the Radon HGF is,
roughly speaking, defined by 
\[
F_{\lm}(z,\al;C)=\int_{C(z)}\chi_{\lm}(tz;\al)\cdot\tau(t)
\]
as a function on some Zariski open set $Z\subset\mat(m,N)$, where
$m$ is an integer such that $r<m<N$ and $t\in\mat(r,m)$ is the
homogeneous coordinates of the Grassmannian $\gras(r,m)$, the set
of $r$-dimensional subspaces of $\C^{m}$, and $\tau(t)$ is a certain
$r(m-r)$-form in $t$-space. In case $\lm=(1,\dots,1)$, we write
$z=(z^{(1)},\dots,z^{(n)})$ with $z^{(j)}\in\mat(m,r)$, then the
Radon HGF has the form 
\[
F_{(1,\dots,1)}(z,\al;C)=\int_{C(z)}\prod_{1\leq j\leq n}\left(\det tz^{(j)}\right)^{\al_{j}}\cdot\tau(t).
\]
The Weyl group in our context is defined by $W_{\lm}:=N_{G}(H_{\lm})/H_{\lm}$,
where $N_{G}(H_{\lm})$ is the normalizer of $H_{\lm}$ in $G$. One
of our main result is the explicit determination of the Weyl group
$W_{\lm}$ (Theorem \ref{thm:weyl-main-1}). When $\lm=(1,\dots,1)$,
$W_{\lm}$ is isomorphic to the permutation group $\S_{n}$ and the
isomorphism is given by $\S_{n}\ni\sm\mapsto P_{\sm}=(\de_{a,\sm(b)}\cdot1_{r})_{1\leq a,b\leq n}\in G$,
where $1_{r}$ is the identity matrix of size $r$ and $P_{\sm}$
is a permutation matrix in blocks associated with $\sm\in\S_{n}$.
Hence $W_{\lm}$ is a finite group in this case. However, $W_{\lm}$
is not so for $\lm\ne(1,\dots,1)$. It has the form of a semi-direct
product of a continuous group and a finite group.

Let us explain our motivation more concretely explaining the relation
of the Radon HGF to the classical HGFs and to their Hermitian matrix
integral analogues. 

Among the classical HGFs, the Gauss HGF and its confluent family form
an important part. The confluent family consists of Kummer's confluent
HGF, Bessel function, Hermite-Weber function and Airy function. They
are given by the integrals
\begin{align*}
\text{Gauss: } & \int_{C}u^{a-1}(1-u)^{c-a-1}(1-xu)^{-b}du,\\
\text{Kummer: } & \int_{C}e^{xu}u^{a-1}(1-u)^{c-a-1}du,\\
\text{Bessel: } & \int_{C}e^{u-\frac{x}{u}}u^{-c-1}dt=\int_{C'}e^{xu-\frac{1}{u}}u^{c-1}du,\\
\text{Hermite-Weber: } & \int_{C}e^{xu-\frac{1}{2}u^{2}}u^{-a-1}du,\\
\text{Airy: } & \int_{C}e^{xu-u^{3}/3}du,
\end{align*}
with an appropriate path of integration $C$, and each of them is
characterized as a solution of the 2nd order differential equation
on the $1$-dimesional complex projective space $\Ps^{1}$. For the
Gauss and Kummer, we have 
\begin{align*}
\,_{2}F_{1}(a,b,c;x) & =\frac{\G(c)}{\G(a)\G(c-a)}\int_{0<u<1}u^{a-1}(1-u)^{c-a-1}(1-xu)^{-b}du,\\
\,_{1}F_{1}(a,c;x) & =\frac{\G(c)}{\G(a)\G(c-a)}\int_{0<u<1}e^{xu}u^{a-1}(1-u)^{c-a-1}du,
\end{align*}
which give the holomorphic solutions of the differential equations
at $x=0$ taking the value $1$ at this point, respectively. The integrals
for the Gauss family are understood as the Gelfand HGF (=Radon HGF
for $r=1$) on $\gras(2,4)$ corresponding to the partitions $(1,1,1,1),(2,1,1),(2,2),(3,1)$
and $(4)$, respectively. See \cite{Gelfand,Kimura-Haraoka} for the
detail. 

A Hermitian matrix integral analogue of the Gauss and its confluent
family is used and/or studied in several works \cite{Faraut,inamasu-ki,kimura-1,Kontsevich,Mehta,muirhead,muirhead-2}.
Let $\herm$ be the set of complex Hermitian matrices of size $r$,
which is a real vector space of dimension $r^{2}$. Then the analogue
of the Gauss family is 
\begin{align}
\text{Gauss: } & \int_{C}|U|^{a-r}|I-U|^{c-a-r}|I-UX|^{-b}\,dU,\nonumber \\
\text{Kummer: } & \int_{C}|U|^{a-r}|I-U|^{c-a-r}\etr(UX)\,dU,\nonumber \\
\text{Bessel: } & \int_{C}|U|^{c-r}\etr(UX-U^{-1})\,dU,\label{eq:intro-1}\\
\text{Hermite-Weber: } & \int_{C}|U|^{-c-r}\etr(UX-\frac{1}{2}U^{2})\,dU,\nonumber \\
\text{Airy: } & \int_{C}\etr(UX-\frac{1}{3}U^{3})\,dU,\nonumber 
\end{align}
where $X,U\in\herm$, $|U|:=\det U,\etr(U)=\expo(\Tr U)$ and $dU=\bigwedge_{i}dU_{i,i}\bigwedge_{i<j}d(\re U_{i,j})\wedge d(\im U_{i,j})$
is the Euclidean volume form on $\herm$. They can be understood as
the Radon HGF on $\gras(2r,4r)$ corresponding to the partitions $\lm$
of $4$: $(1,1,1,1),(2,1,1),(2,2),(3,1)$ and $(4)$, respectively
\cite{kimura-2}. Note that the number of parameters contained in
the above example is equal to $\ell(\lm)-1$, where $\ell(\lm)$ is
the length of $\lm$, namely the number of parts in $\lm$. For example,
Kummer's HGF contains $2(=\ell(\lm)-1)$ parameters $a,c$. However,
in the definition of Radon HGF corresponding to the above cases, we
know that the number of parameters contained in the Radon HGF is essentially
$3$ for any $\lm$. The reason for this gap concerning the number
of parameters can be explained by considering the action of continuous
part of the Weyl group on the Radon HGF (Proposition \ref{prop:act-3}).
On the other hand, the action of the part of finite group of $W_{\lm}$
gives the formulas for the Gauss HGF and Kummer's confluent HGF:
\begin{align}
\hyp 21(a,b,c;x) & =(1-x)^{-b}\hyp 21\left(c-a,b,c;\frac{x}{x-1}\right),\label{eq:intro-2}\\
\hyp 11(a,c;x) & =e^{x}\cdot\hyp 11\left(c-a,c;-x\right)\label{eq:intro-3}
\end{align}
\cite{Kimura-Koitabashi} and similar formulae for their Hermitian
matrix integral analogues (Propositions \ref{prop:ex-gauss-2}, \ref{prop:ex-kummer-1}).
So our motivation to study the Weyl group analogue associated with
the Radon HGF is to understand various transformation formulae known
for the classical HGF and its extension from a unified viewpoint.

This paper is organized as follows. In Section 2, we recall the definition
of the Radon HGF and the results necessary in this paper. In Section
3, we determine the structure of the normalizer $N_{G}(H_{\lm})$
(Theorem \ref{thm:weyl-main-1}) and give the explicit form of the
Weyl group $W_{\lm}$ associated with the Radon HGF (Proposition \ref{prop:weyl-group}).
They are the first main results of this paper. In Section 4, we study
the action of the Weyl group on the Radon HGF, which describes the
symmetry of the Radon HGF. This is the second main result of this
paper and is given in Theorem \ref{thm:main-2}. Section 5 is devoted
to the examples. We consider the Radon HGF on $\gras(2r,3r)$ and
on $\gras(2r,4r)$ associated with the partitions of 3 and 4, respectively.
For the Radon HGF on $\gras(2r,3r)$ for the partitions $(1,1,1),(2,1)$
and $(3)$ of $3$, we have the Hermitian matrix integral analogues
of the beta function, the gamma function and the Gaussian integral,
respectively. For the Radon HGF on $\gras(2r,4r)$ for the partitions
$(1,1,1,1),(2,1,1),(2,2),(3,1)$ and $(4)$, we have the Hermitian
matrix integral analogues of Gauss, Kummer, Bessel, Hermite-Weber
and Airy, respectively. For these cases we try to make clear what
Theorem \ref{thm:main-2} provides. We will see the reason why no
parameter is contained in the analogue of Airy function and how the
analogue of transformation formulae (\ref{eq:intro-2}), (\ref{eq:intro-3})
are obtained. The same formula is given as Proposition XV.3.4 in \cite{Faraut}
which is obtained by a different approach.

\section{Radon HGF}

\subsection{\label{subsec:Jordan-group}Jordan group}

We recall the definition of Radon HGF. For the detailed explanation,
see \cite{kimura-2}. Let $r$ and $N$ be positive integers such
that $r<N$ and assume $N=nr$ for some integer $n$. Suppose we are
given a partition $\lm=(n_{1},n_{2},\dots,n_{\ell})$ of $n$, namely
a sequence of positive integers $n_{1}\geq n_{2}\geq\cdots\geq n_{\ell}$
such that $|\lm|:=n_{1}+\cdots+n_{\ell}=n$. For $\lm$, let us consider
a complex Lie subgroup $H_{\lm}$ of the complex general linear group
$G=\GL N$. Put 

\[
J_{r}(p):=\left\{ h=\left(\begin{array}{cccc}
h_{0} & h_{1} & \dots & h_{p-1}\\
 & \ddots & \ddots & \vdots\\
 &  & \ddots & h_{1}\\
 &  &  & h_{0}
\end{array}\right)\mid h_{0}\in\GL r,\ h_{i}\in\mat(r)\right\} \subset\GL{pr},
\]
which is a Lie group called the (generalized) \emph{Jordan group}.
Define
\[
H_{\lm}:=\left\{ h=\diag(h^{(1)},\dots,h^{(\ell)})\mid h^{(j)}\in J_{r}(n_{j})\right\} \subset G.
\]
Then $H_{\lm}\simeq J_{r}(n_{1})\times\cdots\times J_{r}(n_{\ell})$,
where an element $(h^{(1)},\dots,h^{(\ell)})\in J_{r}(n_{1})\times\cdots\times J_{r}(n_{\ell})$
is identified with a block-diagonal matrix $\diag(h^{(1)},\dots,h^{(\ell)})\in H_{\lm}$.
In particular, for $\lm=(1,\dots,1)$, $H_{\lm}\simeq(\GL r)^{n}$
since $J_{r}(1)=\GL r$, and when $r=1$ it reduces to the Cartan
subgroup of $G$ consisting of diagonal matrices. We also use a unipotent
subgroup $\jro(p)\subset J_{r}(p)$: 
\[
\jro(p):=\left\{ h=\left(\begin{array}{cccc}
1_{r} & h_{1} & \dots & h_{p-1}\\
 & \ddots & \ddots & \vdots\\
 &  & \ddots & h_{1}\\
 &  &  & 1_{r}
\end{array}\right)\mid h_{i}\in\mat(r)\right\} .
\]
An element $h\in J_{r}(p)$ is expressed as $h=\sum_{0\leq i<p}h_{i}\otimes\La^{i}$
using the shift matrix $\La=(\de_{i+1,j})$ of size $p$. Taking into
account the expression $h=\sum_{0\leq i<p}h_{i}\otimes\La^{i}$, we
can describe $J_{r}(p)$ and $\jro(p)$ as follows. Put $R=\mat(r)$
and consider it as a $\C$-algebra. Let $R[w]$ be the ring of polynomials
in $w$ with coefficients in $R$. Then $J_{r}(p)$ is identified
with the group of units in the quotient ring of $R[w]$ by the principal
ideal $(w^{p})$:
\[
J_{r}(p)\simeq\left(R[w]/(w^{p})\right)^{\times}.
\]
Thus we can write $J_{r}(p)$ and $\jro(p)$ as 
\begin{align*}
J_{r}(p) & \simeq\left\{ h_{0}+\sum_{1\leq i<p}h_{i}w^{i}\in R[w]/(w^{p})\mid h_{0}\in\GL r\right\} ,\\
\jro(p) & \simeq\left\{ 1_{r}+\sum_{1\leq i<p}h_{i}w^{i}\in R[w]/(w^{p})\right\} .
\end{align*}
In the following we use freely this identification when necessary. 

\subsection{\label{subsec:Char-conf-1-1}Character of Jordan group}

In this section, we give the characters of the universal covering
group of Jordan group and of $H_{\lm}$. The Radon HGF is defined
as a Radon transform of these characters. The following lemma is easily
shown.
\begin{lem}
\label{lem:radon-conf-1}We have a group isomorphism 
\[
J_{r}(p)\simeq\GL r\ltimes\jro(p)
\]
defined by the correspondence $\GL r\ltimes\jro(p)\ni(g,h)\mapsto g\cdot h=\sum_{0\leq i<p}(gh_{i})\otimes\La^{i}\in J_{r}(p)$,
where the semi-direct product is defined by the action of $\GL r$
on $\jro(p)$: $h\mapsto g^{-1}hg=\sum_{i}(g^{-1}h_{i}g)\otimes\La^{i}$.
\end{lem}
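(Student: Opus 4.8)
The plan is to establish the isomorphism $J_{r}(p)\simeq\GL r\ltimes\jro(p)$ by exhibiting the stated map as a bijective group homomorphism, working throughout in the algebra model $J_{r}(p)\simeq(R[w]/(w^{p}))^{\times}$ with $R=\mat(r)$. First I would observe that an element $h\in J_{r}(p)$ corresponds to a polynomial $h_{0}+\sum_{1\leq i<p}h_{i}w^{i}$ with $h_{0}\in\GL r$, and that this can be factored uniquely as $h_{0}\cdot\bigl(1_{r}+\sum_{1\leq i<p}h_{0}^{-1}h_{i}w^{i}\bigr)$; the first factor lies in $\GL r$ (embedded as constants) and the second lies in $\jro(p)$. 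This gives the set-theoretic bijection $J_{r}(p)\to\GL r\times\jro(p)$, $h\mapsto(h_{0},h_{0}^{-1}h)$, with inverse $(g,h)\mapsto g\cdot h$, so the only real content is checking that this inverse is a group homomorphism for the semidirect product structure specified in the statement.

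The key computation is the following: given $(g,h),(g',h')\in\GL r\ltimes\jro(p)$, the semidirect product law is $(g,h)\cdot(g',h')=(gg',\,(g'^{-1}hg')h')$, and I must verify $(g\cdot h)(g'\cdot h')=(gg')\cdot\bigl((g'^{-1}hg')h'\bigr)$ inside $J_{r}(p)$. Since $g,g'$ are constants in $R[w]/(w^{p})$ they commute with everything only up to the ring structure of $R=\mat(r)$ — but the point is precisely that $g'^{-1}$ does \emph{not} commute with $g$ in general, which is why the twist appears. Writing the left side as $g h g' h'$ and inserting $g'g'^{-1}$, one gets $g g' (g'^{-1} h g') h' = (gg')\cdot\bigl((g'^{-1}hg')h'\bigr)$, as desired; here one uses that conjugation $h\mapsto g'^{-1}hg'$ acts coefficientwise, $\sum_{i}(g'^{-1}h_{i}g')\otimes\La^{i}$, which is immediate from the matrix description since $g'$ acts as $g'\otimes 1_{p}$ and commutes with $1_{r}\otimes\La^{i}$. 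One should also note that $g'^{-1}hg'$ indeed lands back in $\jro(p)$: its constant term is $g'^{-1}1_{r}g'=1_{r}$, so $\jro(p)$ is stable under the $\GL r$-action, making the semidirect product well defined.

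There is essentially no obstacle here — the lemma is flagged in the text as ``easily shown'' — and the only thing requiring a moment's care is bookkeeping the order of multiplication, since $R$ is noncommutative and one must resist the temptation to treat $g,g'$ as scalars. I would present the proof as: (i) recall the factorization $h=h_{0}\cdot(h_{0}^{-1}h)$ giving the bijection; (ii) record that $\jro(p)$ is $\GL r$-stable under conjugation, so the semidirect product $\GL r\ltimes\jro(p)$ is defined; (iii) carry out the one-line multiplication check above to confirm $(g,h)\mapsto g\cdot h$ is a homomorphism; (iv) conclude it is an isomorphism since it is a bijective homomorphism. If desired one can remark that this is just the general fact that a group $J$ with a normal subgroup $\jro$ and a subgroup $\GL r$ with $\jro\cap\GL r=\{1\}$ and $\jro\cdot\GL r=J$ splits as a semidirect product, applied to $J=J_{r}(p)$.
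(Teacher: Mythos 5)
Your proof is correct, and it is precisely the standard argument the paper has in mind when it declares the lemma ``easily shown'' (no proof is given there): the unique factorization $h=h_{0}\cdot(h_{0}^{-1}h)$ together with the one-line check $ghg'h'=gg'(g'^{-1}hg')h'$ and the observation that conjugation by constants acts coefficientwise and preserves $\jro(p)$. Nothing is missing.
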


Let us determine the characters of the universal covering group $\tilde{J}_{r}(p)$
of $J_{r}(p)$. Since $J_{r}(p)\simeq\GL r\ltimes\jro(p)$, it is
sufficient to determine characters of the universal covering group
$\widetilde{\Gl}(r)$ and of $\jro(p)$ which come from those of $\tilde{J}_{r}(p)$
by restriction. 

The characters of $\widetilde{\Gl}(r)$ is given as follows (Lemma
2.2 of \cite{kimura-2}). 
\begin{lem}
\label{lem:radon-conf-1-1-1} Any character $f:\widetilde{\Gl}(r)\to\cbatu$
is given by $f(x)=(\det x)^{a}$ for some $a\in\C$.
\end{lem}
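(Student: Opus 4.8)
The plan is to pass to Lie algebras. A character $f:\widetilde{\Gl}(r)\to\cbatu$ is, by definition, a holomorphic homomorphism, so its differential $df:\gl(r)\to\C$ at the identity is a homomorphism of Lie algebras, where $\C=\mathrm{Lie}(\cbatu)$ carries the trivial (abelian) bracket. Hence $df$ annihilates the derived algebra $[\gl(r),\gl(r)]$. An elementary computation with matrix units ($[E_{ij},E_{ji}]=E_{ii}-E_{jj}$ and $[E_{ii},E_{ij}]=E_{ij}$ for $i\ne j$) shows $[\gl(r),\gl(r)]=\mathfrak{sl}(r)$, so $df$ factors through the one–dimensional quotient $\gl(r)/\mathfrak{sl}(r)$, which is identified with $\C$ via the trace. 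Therefore there is a constant $a\in\C$ with $df(X)=a\,\Tr X$ for every $X\in\gl(r)$.

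Next I would produce a character with the same differential. The determinant $\det:\GL r\to\cbatu$, composed with the covering projection $\widetilde{\Gl}(r)\to\GL r$, lifts through the universal cover $\exp:\C\to\cbatu$ to a homomorphism $\widetilde{\det}:\widetilde{\Gl}(r)\to\C$, because $\widetilde{\Gl}(r)$ is simply connected; and since covering maps are local diffeomorphisms, the differential of $\widetilde{\det}$ at the identity equals $d\det=\Tr$. One then sets $(\det x)^{a}:=\exp\!\bigl(a\,\widetilde{\det}(x)\bigr)$ — this is precisely the meaning of the symbol $(\det x)^{a}$ on the covering group — obtaining a character of $\widetilde{\Gl}(r)$ whose differential is again $a\,\Tr$.

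Finally, $f$ and $x\mapsto(\det x)^{a}$ are two holomorphic homomorphisms $\widetilde{\Gl}(r)\to\cbatu$ with the same differential at the identity, and $\widetilde{\Gl}(r)$ is connected, so they coincide. Concretely, the quotient $g\mapsto f(g)\exp\!\bigl(-a\,\widetilde{\det}(g)\bigr)$ is a homomorphism with vanishing differential, hence it is identically $1$ on $\exp\gl(r)$, which generates the connected group $\widetilde{\Gl}(r)$; thus $f(x)=(\det x)^{a}$, as claimed.

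The only step carrying any real content is the identification $[\gl(r),\gl(r)]=\mathfrak{sl}(r)$, which is routine, so I do not anticipate a genuine obstacle; everything else is the standard correspondence between Lie group homomorphisms out of a (simply) connected group and Lie algebra homomorphisms. If one prefers to bypass Lie algebras, an equivalent route uses the isomorphism $\widetilde{\Gl}(r)\simeq\mathrm{SL}(r,\C)\times\C$, in which the $\C$–factor is the "$\widetilde{\det}$" coordinate mapping to $\cbatu$ by $z\mapsto e^{z}$: any character is trivial on the perfect group $\mathrm{SL}(r,\C)$ and on the factor $\C$ it has the form $z\mapsto e^{az}$, which again yields $f=(\det)^{a}$.
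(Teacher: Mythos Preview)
Your argument is correct and is the standard route: pass to the Lie algebra, use $[\gl(r),\gl(r)]=\mathfrak{sl}(r)$ to force $df=a\,\Tr$, then exploit simple connectedness of $\widetilde{\Gl}(r)$ to integrate back. The alternative you sketch via the decomposition $\widetilde{\Gl}(r)\simeq\mathrm{SL}(r,\C)\times\C$ (using that $\mathrm{SL}(r,\C)$ is perfect, hence admits no nontrivial character) is equally valid and perhaps even cleaner.

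As for comparison with the paper: there is nothing to compare, since the paper does not prove this lemma here but simply cites it as Lemma~2.2 of \cite{kimura-2}. Your write-up would serve perfectly well as a self-contained proof in place of that citation.
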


Let us give the characters of $\jro(p)$. Let $\jroo(p)$ be the Lie
algebra of $\jro(p)$:
\[
\jroo(p)=\{X=\sum_{1\leq i<p}X_{i}w^{i}\mid X_{i}\in R\},
\]
where the Lie bracket of $X,Y\in\jroo(p)$ is given by $[X,Y]=\sum_{2\leq k<p}\sum_{i+j=k}[X_{i},Y_{j}]w^{k}$.
A character $\chi$ of $\jro(p)$ is obtained by lifting a character
of $\jroo(p)$ to that of $\jro(p)$ by the exponential map so that
the following diagram becomes commutative:

\[
\begin{CD}\jro(p)@>\chi>>\cbatu\\
@A\exp AA@AA\exp A\\
\jroo(p)@>d\chi>>\C.
\end{CD}
\]
Since $\jro(p)$ is a simply connected Lie group, the exponential
map
\[
\exp:\jroo(p)\to\jro(p),\,X\mapsto\exp(X)=\sum_{0\leq k}\frac{1}{k!}X^{k}=\sum_{0\leq k<p}\frac{1}{k!}X^{k}
\]
is a biholomorphic map. Hence we can consider the inverse map $\log:\jro(p)\to\jroo(p)$,
which, for $h=1_{r}+\sum_{1\leq i<p}h_{i}w^{i}\in\jro(p)$, defines
$\te_{k}(h)\in R$:
\begin{align}
\log h & =\log\left(1_{r}+h_{1}w+\cdots+h_{p-1}w^{p-1}\right)\nonumber \\
 & =\sum_{1\leq k<p}\frac{(-1)^{k+1}}{k}\left(h_{1}w+\cdots+h_{p-1}w^{p-1}\right)^{k}\nonumber \\
 & =\sum_{1\leq k<p}\theta_{k}(h)w^{k}.\label{eq:char-conf-0-1}
\end{align}
Here $\te_{k}(h)$ is a sum of monomials of noncommutative elements
$h_{1},\dots,h_{p-1}\in R$. If a weight of $h_{i}$ is defined to
be $i$, then the monomials appearing in $\te_{k}(h)$ has the weight
$k$. For example we have 
\begin{align*}
\te_{1}(h) & =h_{1},\\
\te_{2}(h) & =h_{2}-\frac{1}{2}h_{1}^{2},\\
\te_{3}(h) & =h_{3}-\frac{1}{2}(h_{1}h_{2}+h_{2}h_{1})+\frac{1}{3}h_{1}^{3},\\
\te_{4}(h) & =h_{4}-\frac{1}{2}(h_{1}h_{3}+h_{2}^{2}+h_{3}h_{1})+\frac{1}{3}(h_{1}^{2}h_{2}+h_{1}h_{2}h_{1}+h_{2}h_{1}^{2})-\frac{1}{4}h_{1}^{4}.
\end{align*}

\begin{lem}
\label{lem:Radon-conf-2-1-1}Let $\chi:\jro(p)\to\cbatu$ be a character
obtained from that of $\tilde{J}_{r}(p)$ by restricting it to $\jro(p)$.
Then there exists $\al=(\al_{1},\dots,\al_{p-1})\in\C^{p-1}$ such
that 
\begin{equation}
\chi(h;\al)=\exp\left(\sum_{1\leq i<p}\al_{i}\Tr\,\theta_{i}(h)\right).\label{eq:char-conf-1-1}
\end{equation}
Conversely, $\chi$ defined by (\ref{eq:char-conf-1-1}) gives a character
of $\jro(p)$.
\end{lem}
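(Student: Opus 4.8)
The plan is to exploit the fact that $\jro(p)$ is simply connected and unipotent, so that characters $\chi:\jro(p)\to\cbatu$ correspond bijectively, via $\chi\circ\exp=\exp\circ\,d\chi$, to Lie algebra homomorphisms $d\chi:\jroo(p)\to\C$, where $\C$ is regarded as the abelian Lie algebra of $\cbatu$. A Lie algebra homomorphism into an abelian Lie algebra is exactly a linear functional vanishing on the derived subalgebra $[\jroo(p),\jroo(p)]$, i.e.\ a linear functional on the abelianization $\jroo(p)/[\jroo(p),\jroo(p)]$. So the first step is to identify this abelianization. From the bracket formula $[X,Y]=\sum_{2\leq k<p}\sum_{i+j=k}[X_{i},Y_{j}]w^{k}$ one sees that $[\jroo(p),\jroo(p)]$ consists of elements $\sum_{2\leq k<p}Z_{k}w^{k}$ whose coefficients $Z_{k}\in R=\mat(r)$ lie in the span of commutators $[R,R]=\mathfrak{sl}_{r}$; hence the abelianization is $\bigoplus_{1\leq i<p} R/[R,R]\cong\bigoplus_{1\leq i<p}\C$, the isomorphism on each summand being the trace $\Tr:R/[R,R]\xrightarrow{\sim}\C$. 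Therefore every $d\chi$ has the form $d\chi(X)=\sum_{1\leq i<p}\al_i\,\Tr X_i$ for a unique $\al=(\al_1,\dots,\al_{p-1})\in\C^{p-1}$.

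The second step is to transport this back through the exponential. For $h\in\jro(p)$ we have $\chi(h;\al)=\exp\bigl(d\chi(\log h)\bigr)$, and writing $\log h=\sum_{1\leq k<p}\te_k(h)w^k$ as in \eqref{eq:char-conf-0-1} gives $d\chi(\log h)=\sum_{1\leq i<p}\al_i\,\Tr\,\te_i(h)$, which is precisely \eqref{eq:char-conf-1-1}. For the converse, one checks directly that $h\mapsto\exp\bigl(\sum_i\al_i\Tr\,\te_i(h)\bigr)$ is multiplicative: equivalently, that $\sum_i\al_i\Tr\,\te_i(\cdot)$ is additive under the group law of $\jro(p)$. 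This follows from the Baker--Campbell--Hausdorff formula, since $\log(gh)=\log g+\log h+\tfrac12[\log g,\log h]+\cdots$ and all the higher BCH terms are brackets, hence killed by $\Tr$ (as $\Tr[A,B]=0$); thus $\Tr\,\te_i(gh)=\Tr\,\te_i(g)+\Tr\,\te_i(h)$ for each $i$, and exponentiating a sum of additive functions gives a homomorphism.

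The only mild subtlety is the claim that \emph{every} character of $\jro(p)$ arising by restriction from $\tilde J_r(p)$ is obtained this way; but in fact the argument shows more, namely that every character of $\jro(p)$ whatsoever has this form (the restriction hypothesis is not needed for this direction), so the statement follows a fortiori. The step I expect to require the most care is the identification of $[\jroo(p),\jroo(p)]$: one must verify that the commutators $[R,R]$ really fill out all of $\mathfrak{sl}_r$ in each weight $k\geq 2$ (so that no extra functionals survive), and separately that the weight-$1$ component $R w$ is central and hence contributes a full copy of $R/[R,R]\cong\C$ to the abelianization — together these pin down the dimension of the space of characters as exactly $p-1$, matching the parameter space $\C^{p-1}$.
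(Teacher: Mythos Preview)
Your overall strategy is sound, but there is a genuine gap in the computation of the abelianization of $\jroo(p)$. You claim that $[\jroo(p),\jroo(p)]$ meets each weight component $Rw^{k}$ in $[R,R]w^{k}$, and hence that the abelianization is $\bigoplus_{1\le i<p}R/[R,R]\cong\C^{p-1}$. This is correct for $k\ge 2$, but fails for $k=1$: by the bracket formula every commutator $[X,Y]$ has vanishing $w^{1}$-coefficient, so $[\jroo(p),\jroo(p)]\cap Rw=0$ and the weight-$1$ summand contributes a full copy of $R$, of dimension $r^{2}$, to the abelianization. (Your side remark that $Rw$ is central is also false for $r\ge 2$, since $[X_{1}w,Y_{j}w^{j}]=[X_{1},Y_{j}]w^{j+1}$ need not vanish; but centrality is in any case the wrong notion here.) Consequently, for $r\ge 2$ there exist characters of $\jro(p)$ \emph{not} of the form \eqref{eq:char-conf-1-1}: for any $B\in R$ the map
\[
h\longmapsto\exp\Bigl(\Tr\bigl(B\,\te_{1}(h)\bigr)+\sum_{2\le i<p}\al_{i}\Tr\,\te_{i}(h)\Bigr)
\]
is a character of $\jro(p)$, and it has the form \eqref{eq:char-conf-1-1} only when $B$ is a scalar matrix. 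So your assertion that ``every character of $\jro(p)$ whatsoever has this form'' is false, and the restriction hypothesis is not superfluous.

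What rescues the argument is precisely that hypothesis. A character $\chi$ of $\tilde J_{r}(p)$ satisfies $\chi(gng^{-1})=\chi(n)$ for all $g$ and all $n\in\jro(p)$, so $\chi|_{\jro(p)}$ is invariant under the $\GL r$-action $h_{i}\mapsto g^{-1}h_{i}g$ of Lemma~\ref{lem:radon-conf-1}. On the weight-$1$ part this forces the linear functional $X_{1}\mapsto\Tr(BX_{1})$ to satisfy $\Tr(BX_{1})=\Tr(gBg^{-1}X_{1})$ for all $g\in\GL r$ and all $X_{1}$, hence $B$ is scalar. This is the missing step that cuts the parameter space down to $\C^{p-1}$. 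With this invariance argument inserted, the rest of your proof (the BCH verification of the converse, etc.) goes through; since the paper itself simply cites \cite{kimura-2} for this lemma, your corrected argument would supply a self-contained proof.
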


\begin{proof}
See Lemma 2.7 of \cite{kimura-2}.
\end{proof}
By virtue of the isomorphism in Lemma \ref{lem:radon-conf-1}, the
characters of $\tilde{J}_{r}(p)$ are determined as a consequence
of Lemmas \ref{lem:radon-conf-1-1-1} and \ref{lem:Radon-conf-2-1-1}.
\begin{prop}
\label{prop:Radon-conf-3-1}Any character $\chi_{p}:\tilde{J}_{r}(p)\to\cbatu$
is given by 
\[
\chi_{p}(h;\al)=(\det h_{0})^{\al_{0}}\exp\left(\sum_{1\leq i<p}\al_{i}\Tr\,\theta_{i}(\underline{h})\right),
\]
for some $\al=(\al_{0},\al_{1},\dots,a_{p-1})\in\C^{p}$, where $\underline{h}\in\jro(p)$
is defined by $h=\sum_{0\leq i<p}h_{i}w^{i}=\sum_{0\leq i<p}h_{0}(h_{0}^{-1}h_{i})w^{i}=h_{0}\cdot\underline{h}$.
\end{prop}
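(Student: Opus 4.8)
The proof of Proposition \ref{prop:Radon-conf-3-1} will be a direct assembly of the two preceding lemmas through the semidirect product decomposition. The plan is as follows. First I would invoke the isomorphism $J_r(p)\simeq\Gl(r)\ltimes\jro(p)$ of Lemma \ref{lem:radon-conf-1}, which lifts to an isomorphism of universal covering groups $\tilde J_r(p)\simeq\widetilde{\Gl}(r)\ltimes\jro(p)$ (note that $\jro(p)$ is already simply connected, so only the $\Gl(r)$-factor needs to be covered). Any character $\chi_p:\tilde J_r(p)\to\cbatu$ restricts to a character of $\widetilde{\Gl}(r)$ and to a character of $\jro(p)$; since $\cbatu$ is abelian, $\chi_p$ factors through the abelianization and is therefore \emph{uniquely} determined by this pair of restrictions, and conversely any pair of characters of the two factors that agree on the (here trivial) intersection glues to a character of the semidirect product provided it is invariant under the conjugation action --- this invariance is automatic because $\cbatu$ is abelian and $\Tr\te_i(g^{-1}\underline h g)=\Tr\te_i(\underline h)$ by conjugation-invariance of the trace. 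So the content reduces to writing down the two restrictions.

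Next I would identify the two restrictions. By Lemma \ref{lem:radon-conf-1-1-1} the restriction to $\widetilde{\Gl}(r)$ has the form $g\mapsto(\det g)^{\al_0}$ for a unique $\al_0\in\C$. By Lemma \ref{lem:Radon-conf-2-1-1} the restriction to $\jro(p)$ has the form $\underline h\mapsto\exp\bigl(\sum_{1\le i<p}\al_i\Tr\,\te_i(\underline h)\bigr)$ for a unique $(\al_1,\dots,\al_{p-1})\in\C^{p-1}$. Writing a general element of $J_r(p)$ as $h=\sum_{0\le i<p}h_iw^i$ and factoring $h=h_0\cdot\underline h$ with $\underline h=1_r+\sum_{1\le i<p}(h_0^{-1}h_i)w^i\in\jro(p)$ as in the statement, the image of $h$ under $\chi_p$ is the product of the two restricted values, giving
\[
\chi_p(h;\al)=(\det h_0)^{\al_0}\exp\Bigl(\sum_{1\le i<p}\al_i\Tr\,\te_i(\underline h)\Bigr),
\]
which is exactly the claimed formula with $\al=(\al_0,\al_1,\dots,\al_{p-1})\in\C^p$. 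Conversely, given any such $\al$, the right-hand side is a well-defined character: it is the product of a character pulled back from $\widetilde{\Gl}(r)$ and one pulled back from $\jro(p)$ along the two projections $\tilde J_r(p)\to\widetilde{\Gl}(r)$ and $\tilde J_r(p)\to\jro(p)$ (the latter being a group homomorphism precisely because $\jro(p)$ is normal), hence is multiplicative.

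There is essentially no obstacle here beyond bookkeeping; the one point requiring a line of care is that the projection $h\mapsto\underline h$, $J_r(p)\to\jro(p)$, is a group homomorphism --- equivalently, that $\jro(p)$ is a \emph{normal} subgroup of $J_r(p)$ with quotient $\Gl(r)$ --- so that $\underline{h h'}=\underline h\cdot({}^{h_0'^{-1}}\!\underline h\text{-conjugate})$ multiplies correctly under $\chi_p$; this is immediate from the semidirect-product structure of Lemma \ref{lem:radon-conf-1}. The only other thing to check, that the expression is independent of the choice of logarithm branch in $(\det h_0)^{\al_0}$ once one passes to the universal cover, is subsumed in Lemma \ref{lem:radon-conf-1-1-1}. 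Hence the proposition follows by combining Lemmas \ref{lem:radon-conf-1}, \ref{lem:radon-conf-1-1-1} and \ref{lem:Radon-conf-2-1-1}.
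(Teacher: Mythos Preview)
Your approach matches the paper's exactly: the paper states the proposition immediately after remarking that ``the characters of $\tilde{J}_{r}(p)$ are determined as a consequence of Lemmas \ref{lem:radon-conf-1-1-1} and \ref{lem:Radon-conf-2-1-1}'' via the semidirect-product isomorphism of Lemma \ref{lem:radon-conf-1}, and offers no further argument. One slip in your final paragraph: the map $h\mapsto\underline h$ from $J_r(p)$ to $\jro(p)$ is \emph{not} a group homomorphism (only the quotient map $h\mapsto h_0$ is, since $\jro(p)$ is the normal factor in the semidirect product); what actually makes the product formula multiplicative is the $\GL r$-conjugation-invariance of $\chi|_{\jro(p)}$ coming from the trace, which you already invoked correctly in your first paragraph.
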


Now the characters of the group $\tilde{H}_{\lm}$ are given as follows.
\begin{prop}
\label{prop:char-conf-1}For a character $\chi_{\lm}:\tH_{\lm}\to\cbatu$,
there exists $\alpha=(\alpha^{(1)},\dots,\alpha^{(\ell)})\in\C^{n}$,
$\alpha^{(k)}=(\alpha_{0}^{(k)},\alpha_{1}^{(k)},\dots,\alpha_{n_{k}-1}^{(k)})\in\C^{n_{k}}$
such that 
\[
\chi_{\lm}(h;\al)=\prod_{1\leq k\leq\ell}\chi_{n_{k}}(h^{(k)};\al^{(k)}),\quad h=(h^{(1)},\cdots,h^{(\ell)})\in\tilde{H}_{\lm},\;h^{(k)}\in\tilde{J}_{r}(n_{k}).
\]
\end{prop}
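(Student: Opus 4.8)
The plan is to reduce the assertion to Proposition~\ref{prop:Radon-conf-3-1} by exploiting the direct product decomposition of $H_{\lm}$. First I would note that the isomorphism $H_{\lm}\simeq J_{r}(n_{1})\times\cdots\times J_{r}(n_{\ell})$ recalled in Section~\ref{subsec:Jordan-group}, combined with the fact that passing to universal covering groups commutes with finite direct products of connected Lie groups (a finite product of simply connected spaces is simply connected, and a finite product of universal covering maps is again a universal covering map), gives a group isomorphism
\[
\tH_{\lm}\;\simeq\;\tilde{J}_{r}(n_{1})\times\cdots\times\tilde{J}_{r}(n_{\ell}).
\]
Under this identification the $k$-th factor $\tilde{J}_{r}(n_{k})$ sits inside $\tH_{\lm}$ as $\{e\}\times\cdots\times\tilde{J}_{r}(n_{k})\times\cdots\times\{e\}$; these $\ell$ subgroups pairwise commute and together generate $\tH_{\lm}$.

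Next, given an arbitrary character $\chi_{\lm}:\tH_{\lm}\to\cbatu$, I would restrict it to each factor, setting $\psi_{k}:=\chi_{\lm}|_{\tilde{J}_{r}(n_{k})}$, which is again a (holomorphic) character of $\tilde{J}_{r}(n_{k})$. Writing $h=(h^{(1)},\dots,h^{(\ell)})\in\tH_{\lm}$ as the product of the elements $(e,\dots,h^{(k)},\dots,e)$ and using that $\chi_{\lm}$ is a homomorphism into the abelian group $\cbatu$, one obtains
\[
\chi_{\lm}(h)=\prod_{1\leq k\leq\ell}\psi_{k}(h^{(k)}).
\]
Then Proposition~\ref{prop:Radon-conf-3-1} applies to each $\psi_{k}$: there exists $\al^{(k)}=(\al_{0}^{(k)},\al_{1}^{(k)},\dots,\al_{n_{k}-1}^{(k)})\in\C^{n_{k}}$ with $\psi_{k}=\chi_{n_{k}}(\cdot;\al^{(k)})$. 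Since $n_{1}+\cdots+n_{\ell}=n$, collecting these parameters produces $\al=(\al^{(1)},\dots,\al^{(\ell)})\in\C^{n}$ and the stated formula; conversely, any such product of the $\chi_{n_{k}}$'s is manifestly a character of $\tH_{\lm}$.

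The argument is short because all the real work has already been done in Proposition~\ref{prop:Radon-conf-3-1} (which in turn rests on Lemmas~\ref{lem:radon-conf-1}, \ref{lem:radon-conf-1-1-1}, \ref{lem:Radon-conf-2-1-1}). The only step that needs a word of justification, and hence the closest thing to an \emph{obstacle}, is the first one: that the universal cover of $J_{r}(n_{1})\times\cdots\times J_{r}(n_{\ell})$ is the product of the individual universal covers, so that restriction of $\chi_{\lm}$ to the $k$-th slot really does land in $\tilde{J}_{r}(n_{k})$ and not in some quotient or extension of it. This, however, is a standard fact from covering-space theory, so no genuine difficulty arises.
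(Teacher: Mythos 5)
Your argument is correct and is exactly the reasoning the paper leaves implicit: it states the proposition as an immediate consequence of Proposition \ref{prop:Radon-conf-3-1} via the direct product decomposition $\tH_{\lm}\simeq\tilde{J}_{r}(n_{1})\times\cdots\times\tilde{J}_{r}(n_{\ell})$, which is precisely the factorization you carry out. The covering-space point you flag (universal cover of a finite product is the product of universal covers) is the right thing to note and is indeed standard, so there is nothing to add.
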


\begin{cor}
In case $\lm=(1,\dots,1)$, a character $\chi:=\chi_{\lm}$ has the
form 
\[
\chi(h;\al)=\prod_{1\leq k\leq n}(\det h^{(k)})^{\al^{(k)}},\quad h=\diag(h^{(1)},\dots,h^{(n)}),\quad h^{(k)}\in\widetilde{\Gl}(r)
\]
with $\al=(\al^{(1)},\dots,\al^{(n)})\in\C^{n}$. 
\end{cor}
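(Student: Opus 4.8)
The plan is to specialize Proposition \ref{prop:char-conf-1} to the partition $\lm=(1,\dots,1)$ and unwind the notation. In this case every part equals $1$, i.e. $n_k=1$ for $1\leq k\leq\ell$ and $\ell=n$, so each Jordan block is $J_r(n_k)=J_r(1)=\GL r$ and the associated unipotent group $\jro(1)$ is trivial. Consequently $\tH_\lm\simeq\widetilde{\Gl}(r)^n$, and the decomposition $h=\diag(h^{(1)},\dots,h^{(n)})$ with $h^{(k)}\in\widetilde{\Gl}(r)$ is exactly the block-diagonal description of $H_\lm$ from Section \ref{subsec:Jordan-group}.

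Next I would invoke Proposition \ref{prop:char-conf-1}: any character $\chi_\lm$ of $\tH_\lm$ factors as $\chi_\lm(h;\al)=\prod_{1\leq k\leq n}\chi_{n_k}(h^{(k)};\al^{(k)})$ with $\al^{(k)}\in\C^{n_k}$. Since $n_k=1$, the parameter $\al^{(k)}=(\al_0^{(k)})$ lies in $\C^1$, and the full parameter $\al=(\al^{(1)},\dots,\al^{(n)})$ lies in $\C^{n_1+\cdots+\cdots+n_\ell}=\C^n$, matching the statement. It then remains to evaluate $\chi_1$. By Proposition \ref{prop:Radon-conf-3-1} with $p=1$, the sum $\sum_{1\leq i<p}\al_i\Tr\,\te_i(\underline h)$ is empty and $\underline h=1_r$, so $\chi_1(h^{(k)};\al^{(k)})=(\det h^{(k)})^{\al_0^{(k)}}$; equivalently this is Lemma \ref{lem:radon-conf-1-1-1} applied directly to $\widetilde{\Gl}(r)$. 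Relabelling $\al^{(k)}:=\al_0^{(k)}$ gives $\chi(h;\al)=\prod_{1\leq k\leq n}(\det h^{(k)})^{\al^{(k)}}$, which is the claimed form; conversely any such product is visibly a character, so the description is exhaustive.

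There is no substantial obstacle here — the corollary is a straightforward consequence of the already-established Propositions \ref{prop:Radon-conf-3-1} and \ref{prop:char-conf-1}. The only points requiring a word of care are the degenerate cases $p=1$ (empty products over the unipotent directions, $\jro(1)=\{1_r\}$) and bookkeeping of the parameter count, namely that $\sum_k n_k=n$ so that $\al\in\C^n$ as asserted; both are immediate once the notation is set up.
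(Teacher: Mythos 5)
Your proposal is correct and follows exactly the route the paper intends (the corollary is stated without proof, with only the remark that $\al^{(k)}$ abbreviates $\al_0^{(k)}$): specialize Proposition \ref{prop:char-conf-1} to $n_k=1$, observe $J_r(1)=\GL r$ with trivial unipotent part so Proposition \ref{prop:Radon-conf-3-1} (or Lemma \ref{lem:radon-conf-1-1-1}) gives $\chi_1(h^{(k)};\al^{(k)})=(\det h^{(k)})^{\al_0^{(k)}}$, and relabel. Nothing is missing.
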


In the above corollary, we write $\al^{(k)}$ for $\al_{0}^{(k)}$
since $\al^{(k)}=(\al_{0}^{(k)})$ in the notation of Proposition
\ref{prop:char-conf-1}.

\subsection{\label{subsec:Definition-of-HGF}Definition of HGF of type $\protect\lm$}

To define the HGF as a Radon transform of the character $\chi_{\lm}:=\chi_{\lm}(\cdot;\al)$,
we prepare the space of independent variables of the HGF. Let $m$
be an integer such that $r<m<N$ and put $\mat'(m,N)=\{z\in\mat(m,N)\mid\rank\,z=m\}$.
According as the partition $\lm=(n_{1},\dots,n_{\ell})$ of $n$,
we write $z\in\mat'(m,N)$ as
\[
z=(z^{(1)},\dots,z^{(\ell)}),\;z^{(j)}=(z_{0}^{(j)},\dots,z_{n_{j}-1}^{(j)}),\;z_{k}^{(j)}\in\mat(m,r).
\]
Put
\begin{equation}
Z=\{z=(z^{(1)},\dots,z^{(\ell)})\in\mat'(m,N)\mid\rank z_{0}^{(k)}=r\;(1\leq k\leq\ell)\}.\label{eq:radon-00}
\end{equation}
Also we take $T=\gras(r,m)=\GL r\setminus\mat'(r,m)$ as the space
of integration variables. Denote by $t=(t_{a,b})\in\mat'(r,m)$ the
homogeneous coordinates of $T$ and by $[t]$ the point of $T$ with
the homogeneous coordinate $t$.

Let $h\in H_{\lm}$ be denoted as $h=\diag(h^{(1)},\dots,h^{(\ell)}),\quad h^{(j)}=\sum_{0\leq k<n_{j}}h_{k}^{(j)}\otimes\La^{k}\in J_{r}(n_{j})$.
Then define the map $\iota:\hlam\to\mat'(r,N)$ by the correspondence
\begin{equation}
h\mapsto(h_{0}^{(1)},\dots,h_{n_{1}-1}^{(1)},\dots,h_{0}^{(\ell)},\dots,h_{n_{\ell}-1}^{(\ell)}).\label{eq:radon-0}
\end{equation}
The map $\iota$ is injective and its image is a Zariski open subset
of $\mat'(r,N)$. The group $\hlam$ is sometimes identified with
the image 
\[
\iota(H_{\lm})=\{v=(v_{0}^{(1)},\dots,v_{n_{1}-1}^{(1)},\dots,v_{0}^{(\ell)},\dots,v_{n_{\ell}-1}^{(\ell)})\mid v_{k}^{(j)}\in\mat(r).\;\det v_{0}^{(j)}\neq0\;(\forall j,k)\}.
\]
This map is lifted naturally to the map $\tilde{H}_{\lm}\to\widetilde{\iota(H_{\lm})}$,
which will be denoted also by $\iota$. For $z\in Z$, put $tz=(tz^{(1)},\dots,tz^{(\ell)})\in\mat(r,N)$
and $tz^{(j)}=(tz_{0}^{(j)},\dots,tz_{n_{j}-1}^{(j)})\in\mat(r,n_{j}r)$,
where $tz_{k}^{(j)}\in\mat(r)$. Note that $\det(tz_{0}^{(j)})\neq0$
for generic $t\in\mat(r,m)$ since $\rank\,z_{0}^{(j)}=r$ by the
definition of $Z$. Then $\iota^{-1}(tz)$ can be considered, where
$tz^{(j)}$ is identified with $\sum_{0\leq k<n_{j}}tz_{k}^{(j)}\otimes\La^{k}\in J_{r}(n_{j})$
and $tz$ is identified with $\diag(tz^{(1)},\dots,tz^{(\ell)})\in H_{\lm}$.
Then we consider $\chi(\iota^{-1}(tz);\al)$. We write simply $\chi(tz;\al)$
for $\chi(\iota^{-1}(tz);\al)$ when there is no risk of confusion.

Assume here that the character $\chi_{\lm}(\cdot;\al)$ satisfies
the following condition.
\begin{assumption}
\label{assu:Radon-conf-4-1}(i) $\al_{0}^{(j)}\notin\Z$ for $1\leq j\leq\ell$,

(ii) $\al_{n_{j}-1}^{(j)}\neq0$ if $n_{j}\geq2$,

(iii) $\al_{0}^{(1)}+\cdots+\al_{0}^{(\ell)}=-m$.
\end{assumption}

By Assumption \ref{assu:Radon-conf-4-1} (iii), we see that $\chi_{\lm}(tz;\al)$
satisfies 
\begin{equation}
\chi_{\lm}((gt)z;\al)=(\det g)^{-m}\chi_{\lm}(tz;\al),\quad g\in\GL r,\label{eq:radon-1}
\end{equation}
which implies that $\chi_{\lm}(tz;\al)$ gives a multivalued analytic
section of the line bundle on $T$ associated with the character $\rho_{m}:\GL r\to\cbatu,\rho_{m}(g)=(\det g)^{m}$.
The branch locus of $\chi_{\lm}(tz;\al)$ on $T$ is 
\[
\bigcup_{1\leq j\leq\ell}S_{z}^{(j)},\quad S_{z}^{(j)}:=\{[t]\in T\mid\det(tz_{0}^{(j)})=0\}.
\]
Put $X_{z}:=T\setminus\cup_{1\leq j\leq\ell}S_{z}^{(j)}$, which is
a complement of the arrangement $\{S_{z}^{(1)},\dots,S_{z}^{(\ell)}\}$
of hypersurfaces of degree $r$ in $T$. 

We need $\tau(t)$, an $r(m-r)$-form in $t$-space, which can be
given as follows. For the homogeneous coordinates $t$ of $T$, put
$t=(t',t'')$ with $t'\in\mat(r),t''\in\mat(r,m-r)$ and consider
the affine neighbourhood $U=\{[t]\in T\mid\det t'\neq0\}$. Then we
can take affine coordinates $u$ of $U$ defined by $u=(t')^{-1}t''$.
Put $du:=\wedge_{i,j}du_{i,j}$, then we give $\tau(t)$ by
\begin{equation}
\tau(t)=(\det t')^{m}du.\label{eq:radon-2}
\end{equation}

\begin{example*}
In case $r=1$, $T=\gras(1,m)=\Ps^{m-1}$ with the homogeneous coordinates
$t=(t_{1},\dots,t_{m})$. We take $\tau(t)=\sum_{1\leq j\leq m}(-1)^{j+1}t_{j}dt_{1}\wedge\cdots\wedge\widehat{dt_{j}}\wedge\cdots\wedge dt_{m}$.
In the coordinate neighbourhood $U=\{[t]\in T\mid t_{1}\neq0\}$ with
the affine coordinates $(u_{2},\dots,u_{m})=(t_{2}/t_{1},\dots,t_{m}/t_{1})$,
we have 
\[
\tau(t)=t_{1}^{m}d\left(\frac{t_{2}}{t_{1}}\right)\wedge\cdots\wedge d\left(\frac{t_{m}}{t_{1}}\right)=t_{1}^{m}du_{2}\wedge\cdots\wedge du_{m}.
\]
\end{example*}
For $\tau(t)$ given by (\ref{eq:radon-2}), we have 
\begin{equation}
\tau(gt)=(\det g)^{m}\tau(t),\quad g\in\GL r.\label{eq:radon-3}
\end{equation}
Then, by virtue of (\ref{eq:radon-1}) and (\ref{eq:radon-3}), we
see that $\chi_{\lm}(tz;\al)\cdot\tau(t)$ gives a multivalued $r(m-r)$-form
on $X_{z}$. 
\begin{defn}
For a character $\chi_{\lm}(\cdot;\al)$ of the group $\tH_{\lm}$
satisfying Assumption \ref{assu:Radon-conf-4-1}, 
\begin{equation}
F_{\lm}(z,\al;C):=\int_{C(z)}\chi_{\lm}(tz;\al)\cdot\tau(t)\label{eq:radon-4}
\end{equation}
is called the Radon HGF of type $\lm$. Here $C(z)$ is an $r(m-r)$-cycle
of the homology group of locally finite chains $H_{r(m-r)}^{\Phi_{z}}(X_{z};\cL_{z})$
of $X_{z}$ with coefficients in the local system $\cL_{z}$ and with
the family of supports $\Phi_{z}$ determined by $\chi_{\lm}(tz;\al)$. 
\end{defn}

We briefly explain about the homology group $H_{r(m-r)}^{\Phi_{z}}(X_{z};\cL_{z})$.
For the detailed explanation, we refer to \cite{kimura-2} and references
therein. Write the integrand of (\ref{eq:radon-4}) as 
\[
\chi_{\lm}(tz;\al)=f(t,z)\exp(g(t,z)),
\]
where
\[
f(t,z)=\prod_{1\leq j\leq\ell}\left(\det tz_{0}^{(j)}\right)^{\al_{0}^{(j)}},\quad g(t,z)=\sum_{1\leq j\leq\ell}\sum_{1\leq k<n_{j}}\al_{k}^{(j)}\Tr\,\theta_{k}(\underline{tz}^{(j)}).
\]
Note that $f(t,z)\cdot\tau(t)$ concerns the multivalued nature of
the integrand whose ramification locus is $\cup_{j}S_{z}^{(j)}$.
On the other hand, $g(t,z)$ is a rational function on $T$ with a
pole divisor $\cup_{j;n_{j}\geq2}S_{z}^{(j)}$ and concerns the nature
of exponential increase to infinity or exponential decrease to zero
of the integrand when $[t]$ approaches to the pole divisor $\cup_{j;n_{j}\geq2}S_{z}^{(j)}$.
The monodromy of $f(t,z)\cdot\tau(t)$, which is the same as that
of $\chi_{\lm}(tz;\al)\cdot\tau(t)$, defines a rank one local system
$\cL_{z}$ on $X_{z}$. On the other hand, $g_{z}:=g|_{X_{z}}:X_{z}\to\C$
defines a family $\Phi_{z}$ of closed subsets of $X_{z}$ by the
condition
\[
A\in\Phi_{z}\iff A\cap g_{z}^{-1}(\{w\in\C\mid\mathrm{Re}\,w\geq a\})\ \ \mbox{is compact for any \ensuremath{a\in\R}}.
\]
Then $\Phi_{z}$ satisfies the condition of a family of supports \cite{kimura-2,Pham-1,Pham-2}
and we can consider a homology groups of locally finite chains with
coefficients in the local system $\cL_{z}$ whose supports belong
to $\Phi_{z}$. This homology group is denoted by $H_{\bullet}^{\Phi_{z}}(X_{z};\cL_{z})$.
Moreover there is a Zariski open subset $V\subset Z$ such that 

\[
\bigcup_{z\in V}H_{r(m-r)}^{\Phi_{z}}(X_{z};\cL_{z})\to V,
\]
which maps $H_{r(m-r)}^{\Phi_{z}}(X_{z};\cL_{z})$ to $z$, gives
a local system on $V$ \cite{kimura-2}. We take its local section
as $C=\{C(z)\}$ to obtain the Radon HGF of type $\lm$. 

We give an expression of $F_{\lm}$ in terms of the affine coordinates
$u=(u_{i,j})=(t')^{-1}t''$ of the chart $U=\{[t]\in T\mid\det t'\neq0\}$.
Using (\ref{eq:radon-1}) and (\ref{eq:radon-2}), we have 
\begin{align*}
F_{\lm}(z,\al;C) & =\int_{C(z)}\chi_{\lm}(\vec{u}z;\al)du\\
 & =\int_{C(z)}\prod_{1\leq j<\ell}\left(\det\vec{u}z_{0}^{(j)}\right)^{\al_{0}^{(j)}}\cdot\exp\left(\sum_{1\leq j\leq\ell}\sum_{1\leq k<n_{j}}\al_{k}^{(j)}\Tr\,\theta_{k}(\underline{\vec{u}z}^{(j)})\right)du,
\end{align*}
where $\vec{u}=(1_{r},u)$. In case $\lm=(1,\dots,1)$, the Radon
HGF is written as 
\[
F(z,\al;C)=\int_{C(z)}\prod_{1\leq j\leq n}\left(\det tz^{(j)}\right)^{\al^{(j)}}\tau(t)=\int_{C(z)}\prod_{1\leq j\leq n}\left(\det(\vec{u}z^{(j)})\right)^{\al^{(j)}}du
\]
and is said to be of \emph{non-confluent type}. 

We give an important property for the Radon HGF which states the covariance
of the function under the action of $\GL m\times\hlam$ on $Z$. we
see that the action
\[
\GL m\times\mnm\times\hlam\ni(g,z,h)\mapsto gzh\in\mnm
\]
induces that on the set $Z$. The following is Proposition 2.12 of
\cite{kimura-2}.
\begin{prop}
\label{prop:covariance}For the Radon HGF of type $\lm$, we have

(1) $F_{\lm}(gz,\al;C)=\det(g)^{-r}F_{\lm}(z,\al;\tilde{C}),\quad g\in\GL m,$

(2) $F_{\lm}(zh,\al;C)=F_{\lm}(z,\al;C)\chi_{\lm}(h;\al),\quad h\in\tilde{H}_{\lm}$.
\end{prop}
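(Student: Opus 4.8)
The plan is to verify the two covariance properties directly from the definition (\ref{eq:radon-4}) by tracking how a change of variables in the integral $\int_{C(z)}\chi_{\lm}(tz;\al)\cdot\tau(t)$ transforms the integrand. For part (1), the substitution $z\mapsto gz$ with $g\in\GL m$ should be absorbed by the change of integration variable $t\mapsto tg^{-1}$ on $T=\gras(r,m)$; indeed $t(gz)=(tg)z$, so if we set $s=tg$ then the integral over $C(gz)$ becomes $\int \chi_{\lm}(sz;\al)\cdot\tau(sg^{-1})$. First I would observe that the cycle $C(gz)$ pulls back to a cycle $\tilde C(z)$ in $H_{r(m-r)}^{\Phi_{z}}(X_{z};\cL_{z})$ under this substitution, since $\GL m$ acts on $T$ and the arrangement $S_z^{(j)}$ transforms compatibly: $S_{gz}^{(j)}=\{[t]\mid\det(tgz_0^{(j)})=0\}$ is the preimage of $S_z^{(j)}$ under $[t]\mapsto[tg]$, and the local system $\cL_z$ and family of supports $\Phi_z$ are likewise carried along. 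Then I would compute how $\tau$ transforms. From (\ref{eq:radon-3}) we know $\tau$ is $\GL r$-homogeneous of weight $m$ on the left; the right action of $\GL m$ on the homogeneous coordinates must produce the factor $\det(g)^{-r}$ — this follows because $\tau$ descends to a section of $\det$ of a tautological-type bundle on $\gras(r,m)$, and a clean way to see the exponent is to write $\tau(t)=(\det t')^m\,du$ with $u=(t')^{-1}t''$ and directly compute the Jacobian of the induced fractional-linear action of $\GL m$ on the affine coordinates $u$.

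For part (2), the substitution $z\mapsto zh$ with $h\in\tilde H_{\lm}$ touches neither the integration variable $t$ nor the form $\tau(t)$, so only the character factor changes: $t(zh)=(tz)h$, and since $\chi_{\lm}$ is a character of $\tilde H_{\lm}$ we get $\chi_{\lm}((tz)h;\al)=\chi_{\lm}(tz;\al)\,\chi_{\lm}(h;\al)$. The constant $\chi_{\lm}(h;\al)$ then factors out of the integral. The one technical point to address is that the identification $tz\mapsto\iota^{-1}(tz)\in H_{\lm}$ is only generically defined, and multiplication by $h$ must be compatible with the block-Jordan group structure; but this is exactly the multiplicativity built into the identification $H_{\lm}\simeq\prod_j J_r(n_j)\simeq\prod_j(R[w]/(w^{n_j}))^\times$, so $\iota^{-1}((tz)h)=\iota^{-1}(tz)\cdot\iota^{-1}(h)$ holds wherever both sides are defined, and hence on a Zariski-dense set, which suffices for the integral identity. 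I should also check that right multiplication by $h$ preserves the open set $Z$ (it does, since it acts on each block by an element of $J_r(n_j)$ with invertible leading term, so $\rank z_0^{(k)}=r$ is preserved), and that the cycle is unaffected — $X_{zh}=X_z$ as sets since $S_{zh}^{(j)}=S_z^{(j)}$, and $\cL_{zh}=\cL_z$, $\Phi_{zh}=\Phi_z$ because $\chi_\lm(h;\al)$ is a nonzero constant that alters neither the monodromy of the integrand nor the behaviour of $\Tr\,\theta_k(\underline{tz}^{(j)})$.

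The main obstacle is the bookkeeping in part (1): making precise that the $\GL m$-action on $T$ lifts to an isomorphism of the homology groups $H_{r(m-r)}^{\Phi_{z}}(X_{z};\cL_{z})\to H_{r(m-r)}^{\Phi_{gz}}(X_{gz};\cL_{gz})$ — i.e. that $\tilde C$ in the statement is genuinely a well-defined element of the same homology group that parametrizes the Radon HGF at $z$ — and pinning down the exponent $-r$ rather than, say, $m-r$ or $-m$, which requires care with the homogeneity conventions of $\tau$ under the two commuting actions of $\GL r$ (on the left) and $\GL m$ (on the right). Once the transformation rule $\tau(tg)=(\det g)^{-r}\tau(t)$ up to the induced reparametrization is established, both identities are immediate; everything else is a routine change of variables, and since this is Proposition 2.12 of \cite{kimura-2} I would simply cite that source for the homological details and present the computation of the transformation factors.
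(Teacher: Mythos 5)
The paper does not prove this proposition at all: it is quoted verbatim as Proposition 2.12 of \cite{kimura-2}, so there is no in-text argument to compare against. Your sketch is the standard (and correct) one: part (2) is the multiplicativity $\iota^{-1}(t(zh))=\iota^{-1}(tz)\cdot h$ together with the character property, and part (1) is the substitution $s=tg$ combined with the equivariance of $\tau$ under the right $\GL m$-action and the transport of the cycle. One small slip: the transformation rule you state at the end, $\tau(tg)=(\det g)^{-r}\tau(t)$, has the wrong sign in the exponent --- a direct check (e.g.\ $r=1$, $m=2$, $\tau=t_{1}dt_{2}-t_{2}dt_{1}$, $g=\diag(a,b)$) gives $\tau(tg)=(\det g)^{r}\tau(t)$; what your substitution actually uses is $\tau(sg^{-1})=(\det g)^{-r}\tau(s)$, which is consistent with the computation in the body of your argument and yields the correct factor $\det(g)^{-r}$ in (1).
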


\section{\label{sec:Weyl-group}Weyl group analogue}

Recall that the Radon HGF of type $\lm$, $\lm$ is a partition of
$n$, is defined by the Radon transform of a character of the subgroup
$H_{\lm}\subset\GL N$, where $N=nr$. When $\lm=(1,\dots,1)$ and
$r=1$, $H_{\lm}$ reduces to the Cartan subgroup $H$ of $\GL N$
consisting of diagonal matrices. In this case $N_{\GL N}(H)/H$ is
the Weyl group of $\GL N$ in the usual sense. In this section, we
introduce an analogue of Weyl group taking $H_{\lm}$ instead of the
Cartan subgroup $H$, and we determine the structure of the Weyl group
analogue.

\subsection{Statement of the result}
\begin{defn}
For the subgroup $\hlam$ of $G=\GL N$ introduced in Section \ref{subsec:Jordan-group},
the Weyl group associated with $\hlam$ is defined by 
\[
W_{\lm}:=N_{G}(\hlam)/\hlam,
\]
where $N_{G}(\hlam)$ is the normalizer of $H_{\lm}$ in $G$:
\[
N_{G}(\hlam)=\{g\in G\mid ghg^{-1}\in H_{\lm}\text{ for }\forall h\in\hlam\}.
\]
\end{defn}

To determine the structure of the Weyl group for $H_{\lm}$, we change
the description of the partition $\lm$ of $n$ as 
\begin{equation}
\lm=(\overbrace{n_{1},\dots,n_{1}}^{p_{1}},\overbrace{n_{2},\dots,n_{2}}^{p_{2}},\dots,\overbrace{n_{s},\dots,n_{s}}^{p_{s}})\label{eq:weyl-1}
\end{equation}
with $n_{1}>n_{2}>\cdots>n_{s}>0$. Hence $p_{1}n_{1}+\cdots+p_{\ell}n_{\ell}=n$.
Accordingly, the group $H_{\lm}$ is written as 
\[
H_{\lm}=\prod_{1\leq i\leq s}H_{i},\quad H_{i}=\overbrace{J_{r}(n_{i})\times\cdots\times J_{r}(n_{i})}^{p_{i}\text{ times}}.
\]
 We also write $G_{i}=\GL{p_{i}n_{i}r}$ and regard $H_{i}$ as a
subgroup of $G_{i}$. 
\begin{prop}
\label{prop:weyl-2}We have the isomorphism
\[
\prod_{1\leq i\leq s}N_{G_{i}}(H_{i})\simeq N_{G}(H_{\lm})
\]
by the map
\[
\prod_{1\leq i\leq s}N_{G_{i}}(H_{i})\ni(X_{1},\dots,X_{s})\longmapsto\diag(X_{1},\dots,X_{s})\in\GL N.
\]
\end{prop}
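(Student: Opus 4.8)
The containment $\supseteq$ is the easy direction: if $X_i \in N_{G_i}(H_i)$ for each $i$, then for any $h = \diag(h^{(1)},\dots,h^{(s)}) \in H_\lm$ with $h^{(i)} \in H_i$ one has $\diag(X_1,\dots,X_s)\, h\, \diag(X_1,\dots,X_s)^{-1} = \diag(X_1 h^{(1)} X_1^{-1},\dots) \in H_\lm$, so $\diag(X_1,\dots,X_s)\in N_G(H_\lm)$; and this block-diagonal embedding is visibly an injective group homomorphism from $\prod_i N_{G_i}(H_i)$ into $G$. The whole content is therefore the reverse inclusion: every $g\in N_G(H_\lm)$ is block-diagonal with blocks in $N_{G_i}(H_i)$. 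The plan is to extract this from how $g$ acts by conjugation on a cleverly chosen one-parameter subgroup (or finite set of elements) of $H_\lm$ whose centralizer structure ``sees'' the block decomposition into the $G_i$.

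The key step is to locate inside $H_\lm$ an element (or a small family) $h$ whose centralizer $Z_G(h)$ in $G = \GL N$ is exactly the block-diagonal subgroup $\prod_i \GL{p_i n_i r} = \prod_i G_i$, matching the decomposition $\C^N = \bigoplus_i \C^{p_i n_i r}$. The natural candidate is a semisimple element: take $h = \diag(h^{(1)},\dots,h^{(s)})$ where $h^{(i)}$ is a scalar $c_i 1_{p_i n_i r}$ with $c_1,\dots,c_s$ distinct nonzero complex numbers. Such $h$ lies in $H_\lm$ (scalars lie in every $J_r(p)$), and its centralizer in $G$ is precisely $\prod_i \GL{p_i n_i r}$ since the eigenspaces of $h$ are exactly the summands $\C^{p_i n_i r}$. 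Now if $g\in N_G(H_\lm)$, then $ghg^{-1} \in H_\lm$; one must check that $ghg^{-1}$ has the \emph{same} eigenvalues $\{c_i\}$ with the same multiplicities $\{p_i n_i r\}$ — this follows because $g$ conjugates $H_\lm$ to itself, so it permutes the ``block-scalar'' elements, and a dimension/eigenvalue-multiplicity argument (the multiplicity $p_i n_i r$ of $c_i$ is determined, e.g. the $c_i$ with largest multiplicity goes to the $c_j$ with largest multiplicity, etc. — or one normalizes the $c_i$ to have all distinct multiplicities by choosing the partition data, noting $p_i n_i$ need not be distinct, so one argues more carefully) shows $g h g^{-1}$ is again block-scalar. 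After possibly composing $g$ with a permutation of blocks of equal size that lies in $N_G(H_\lm)$, one may assume $ghg^{-1} = h$, i.e. $g \in Z_G(h) = \prod_i G_i$, hence $g = \diag(X_1,\dots,X_s)$ with $X_i \in G_i$; and then $g \in N_G(H_\lm)$ forces $X_i \in N_{G_i}(H_i)$ for each $i$ by restricting the normalizing condition to the $i$-th block (since $H_\lm = \prod_i H_i$ respects the block decomposition).

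The main obstacle I anticipate is the bookkeeping around \emph{blocks of equal size}: two distinct indices $i \ne j$ can have $p_i n_i = p_j n_j$, so the eigenvalue multiplicities $p_i n_i r$ need not distinguish the blocks, and a priori $g$ could mix a $G_i$-block with a $G_j$-block of the same dimension. To handle this cleanly one should use a \emph{refined} test element rather than a single block-scalar: impose on $h \in H_\lm$ a Jordan-type structure that encodes $n_i$ within block $i$ (e.g. within each $J_r(n_i)$ factor put a regular element of the form $c_i 1_r + (\text{nilpotent of nilpotency index } n_i) \otimes \Lambda$), so that the \emph{nilpotent part} of the conjugated element records $n_i$ and thus pins down which $G_i$ each summand belongs to. Alternatively — and this is likely the route the paper takes — one observes that $H_\lm \subset N_G(H_\lm)$ already contains the block permutations exchanging equal-$n_i$ factors only within a fixed $H_i$ (not across different $i$), and that a genuine cross-block mixing would conjugate $J_r(n_i)$ onto a subgroup of $G_j$ not equal to $J_r(n_j)$ when $n_i \ne n_j$ (e.g. because $J_r(n_i)$ contains unipotent elements of nilpotency index $n_i$ on an $r$-dimensional generalized eigenspace pattern, an invariant not shared by $J_r(n_j)$), contradicting $g H_\lm g^{-1} = H_\lm$. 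Making this ``$J_r(p)$ remembers $p$'' statement precise — via the conjugacy-invariant structure of the unipotent radical $\jro(p)$, or via the $R[w]/(w^p)$ description in Section~\ref{subsec:Jordan-group} — is the technical heart, and once it is in hand the decomposition $N_G(H_\lm) = \prod_i N_{G_i}(H_i)$ drops out.
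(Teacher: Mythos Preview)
Your refined approach—testing with an element whose Jordan-cell sizes encode the $n_i$—is exactly what the paper does, and once you commit to it the argument is shorter than you seem to anticipate. The paper takes $A=\diag(A_1,\dots,A_s)\in H_\lm$ where each $A_i=\diag(A_{i,1},\dots,A_{i,p_i})$ and every $A_{i,k}\in J_r(n_i)$ is a single Jordan cell of size $n_i r$, with \emph{all} eigenvalues $a_{i,k}$ chosen pairwise distinct across different indices $i$. Setting $B=XAX^{-1}\in H_\lm$, one writes $B=\diag(B_1,\dots,B_s)$ and observes that since $n_1>n_2>\cdots>n_s$ are strictly decreasing, the only block of $B$ that can contain a Jordan cell of size $n_1r$ is $B_1$; hence $A_1$ and $B_1$ have the same Jordan form, in particular the same eigenvalues. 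Iterating downward through $n_2,n_3,\dots$ shows each $A_i$ and $B_i$ share eigenvalues. Then the blockwise relation $X_{i,j}A_j=B_iX_{i,j}$ is a Sylvester equation: for $i\ne j$ the matrices $A_j$ and $B_i$ have disjoint spectra, so $X_{i,j}=0$. That is the whole proof.

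Two remarks on your writeup. First, your scalar test element and the subsequent ``compose $g$ with a block permutation lying in $N_G(H_\lm)$'' step is circular: a permutation swapping a $G_i$-block with a $G_j$-block for $i\ne j$ (even when $p_in_i=p_jn_j$) would lie in $N_G(H_\lm)$ only if the proposition were already known to fail, so you cannot use it to normalize. You were right to abandon that route. Second, the ``$J_r(p)$ remembers $p$'' idea and the centralizer reduction are both avoidable: the paper never reduces to $ghg^{-1}=h$ and never analyzes unipotent radicals—it goes straight from ``eigenvalues of $A_i$ and $B_i$ agree'' to the Sylvester equation. The strict inequality $n_1>\cdots>n_s$ built into the notation \eqref{eq:weyl-1} is precisely what resolves your equal-block-size worry without further work.
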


\begin{proof}
Take $X\in N_{G}(H_{\lm})$ and write it as a block matrix $X=(X_{i,j})_{1\leq i,j\leq s},\;X_{i,j}\in\mat(p_{i}n_{i}r,p_{j}n_{j}r)$
according as the direct product structure $H_{\lm}=\prod_{i}H_{i}$.
We show that $X_{i,j}=0$ for $i\neq j$. In fact, take $A\in H_{\lm}$
such that $A=\diag(A_{1},\dots,A_{s}),$ $A_{i}\in H_{i}$ and each
$A_{i}$ has the form $A_{i}=\diag(A_{i,1},\dots,A_{i,p_{i}}),A_{i,k}\in J_{r}(n_{i})$,
where $A_{i,k}$ is a Jordan cell with eigenvalue $a_{i,k}$, and
furthermore suppose $\{a_{i,1},\dots,a_{i,p_{i}}\}\cap\{a_{j,1},\dots,a_{j,p_{j}}\}=\emptyset$
for $i\neq j$. Put $B=XAX^{-1}$. Since $X$ belongs to $N_{G}(H_{\lm})$,
$B$ is an element of $\hlam$. Write $B$ as $B=\diag(B_{1},\dots,B_{s})$
with $B_{i}\in H_{i}$. Since $A$ and $B$ are similar, their Jordan
normal forms coincide up to the ordering of Jordan cells. Taking into
account that $n_{1}>n_{2}>\cdots>n_{s}$, the only block which can
contain a Jordan cell of size $n_{1}r$ is $B_{1}$ in the block diagonal
expression of $B$. It follows that the Jordan normal form of $A_{1}$
and that of $B_{1}$ must coincide. In particular, their eigenvalues
coincide counting multiplicity. Next let us consider the Jordan cells
of size $n_{2}r$ for $A$ and $B$. Among $B_{2},\dots,B_{s}$, the
block which can contain the cells of size $n_{2}r$ is $B_{2}$. Hence
we see that the Jordan normal forms of $A_{2}$ and $B_{2}$ coincide.
In particular, their eigenvalues coincide counting multiplicity. Proceeding
in a similar way, we see that the eigenvalues of $A_{i}$ and $B_{i}$
coincide counting with multiplicity for any $i$. Write the relation
$XAX^{-1}=B$ in blockwise manner and get 
\begin{equation}
X_{i,j}A_{j}=B_{i}X_{i,j}\quad(1\leq i,j\leq s).\label{eq:weyl-1-2}
\end{equation}
We regard this relation as a linear equation for $X_{i,j}$. In case
$i\neq j$, $A_{i}$ and $B_{j}$ has no common eigenvalue. It follows
that equation (\ref{eq:weyl-1-2}) admits only trivial solution $X_{i,j}=0$.
Thus $X$ has the form $X=\diag(X_{1},\dots,X_{s}),X_{i}\in G_{i}$.
Now for any $A=\diag(A_{1},\dots,A_{s}),A_{i}\in H_{i}$, we have
\[
XAX^{-1}=\diag(X_{1}A_{1}X_{1}^{-1},\dots,X_{s}A_{s}X_{s}^{-1})\in H_{\lm}.
\]
Namely, for any $i$, we have $X_{i}A_{i}X_{i}^{-1}\in H_{i}$. This
implies that $X_{i}\in N_{G_{i}}(H_{i})$.
\end{proof}
\begin{prop}
\label{prop:weyl-3}(1) For $X\in N_{G_{i}}(H_{i})$, there exists
$\sm\in\S_{p_{i}}$ such that $X$ is uniquely written as 
\begin{equation}
X=\diag(X_{1},\dots,X_{p_{i}})\cdot P_{\sm},\quad X_{i}\in N_{\GL{n_{i}r}}(J_{r}(n_{i})),\label{eq:weyl-2}
\end{equation}
where $P_{\sm}=(\de_{j,\sm(k)}\cdot1_{n_{i}r})_{1\leq j,k\leq p_{i}}\in\GL{p_{i}n_{i}r}$
is a permutation matrix. 

(2) The correspondence $X\mapsto((X_{1},\dots,X_{p_{i}}),\sm)$ gives
an isomorphism
\begin{equation}
N_{\GL{p_{i}n_{i}r}}(H_{i})\simeq\left(\prod_{p_{i}\text{times}}N_{\GL{n_{i}r}}(J_{r}(n_{i}))\right)\rtimes\S_{p_{i}},\label{eq:weyl-3}
\end{equation}
where $\sm\in\S_{p_{i}}$ acts on $X=\diag(X_{1},\dots,X_{p_{i}})\in\prod_{p_{i}\text{times}}N_{\GL{n_{i}r}}(J_{r}(n_{i}))$
as a permutation of diagonal blocks:
\[
\diag(X_{1},\dots,X_{p_{i}})\mapsto\diag(X_{\sm(1)},\dots,X_{\sm(p_{i})}).
\]
\end{prop}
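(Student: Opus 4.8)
The plan is to reduce the statement about $N_{G_i}(H_i)$, where $H_i$ is a product of $p_i$ identical Jordan groups $J_r(n_i)$, to the single-block normalizer $N_{\GL{n_ir}}(J_r(n_i))$, exactly in the way Proposition \ref{prop:weyl-2} reduced $N_G(H_\lm)$ to the factors $N_{G_i}(H_i)$. The essential difference is that here the $p_i$ blocks carry \emph{the same} Jordan type, so the obstruction used before — distinct block sizes forcing the off-diagonal blocks of $X$ to vanish — is no longer available, and instead one obtains a permutation of the blocks. So the output of the argument is not a plain direct product but a semidirect product with $\S_{p_i}$, which is precisely assertion (2), while (1) is the corresponding normal-form decomposition of a single element.

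First I would fix $X\in N_{\GL{p_in_ir}}(J_r(n_i)^{p_i})$ and write it as a block matrix $X=(X_{a,b})_{1\le a,b\le p_i}$ with $X_{a,b}\in\mat(n_ir)$. The key step is to choose a probe element $A=\diag(A_1,\dots,A_{p_i})\in H_i$ with each $A_a\in J_r(n_i)$ a single Jordan cell of size $n_ir$ having scalar eigenvalue $a_a$, the $a_1,\dots,a_{p_i}$ being pairwise distinct; in particular $A$ is a regular element whose block structure is rigid. Conjugating, $B:=XAX^{-1}\in H_i$, so $B=\diag(B_1,\dots,B_{p_i})$ with $B_b\in J_r(n_i)$; comparing Jordan normal forms shows each $B_b$ is again a single Jordan cell of size $n_ir$, with eigenvalue $a_{\sm^{-1}(b)}$ for a well-defined permutation $\sm\in\S_{p_i}$ (the multiset of eigenvalues of $A$ and $B$ agree, and distinctness makes $\sm$ unique). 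Writing $XA=BX$ blockwise gives $X_{a,b}A_b = B_a X_{a,b}$; since $A_b$ has eigenvalue $a_b$ and $B_a$ has eigenvalue $a_{\sm^{-1}(a)}$, this Sylvester equation forces $X_{a,b}=0$ unless $a_b = a_{\sm^{-1}(a)}$, i.e. unless $b=\sm^{-1}(a)$, i.e. $a=\sm(b)$. Hence $X = \tilde X\cdot P_\sm$ where $\tilde X=\diag(X_{\sm(1),1},\dots,X_{\sm(p_i),p_i})$ is block-diagonal and $P_\sm=(\de_{a,\sm(b)}\cdot 1_{n_ir})$. Then for \emph{every} $A'=\diag(A_1',\dots,A_{p_i}')\in H_i$ (no genericity now) the relation $XA'X^{-1}\in H_i$, after cancelling the permutation matrix $P_\sm$ (which permutes the $A_b'$ among themselves, staying in $H_i$), yields $X_a A_a' X_a^{-1}\in J_r(n_i)$ for each diagonal block $X_a$ of $\tilde X$; running over all such $A'$ shows $X_a\in N_{\GL{n_ir}}(J_r(n_i))$. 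This proves (1), and the uniqueness is clear because $\sm$ is determined as above and then the $X_a$ are read off as the nonzero blocks.

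For (2), the map $X\mapsto((X_1,\dots,X_{p_i}),\sm)$ is well-defined and bijective by (1). To check it is a group isomorphism onto the semidirect product, I would compute, for $X=\tilde X P_\sm$ and $Y=\tilde Y P_\tau$ with $\tilde X=\diag(X_a)$, $\tilde Y=\diag(Y_a)$, the product $XY=\tilde X P_\sm \tilde Y P_\sm^{-1}\cdot P_\sm P_\tau=\tilde X\,{}^{\sm}\!(\tilde Y)\cdot P_{\sm\tau}$, where $P_\sm\tilde YP_\sm^{-1}={}^{\sm}\!(\tilde Y)=\diag(Y_{\sm^{-1}(1)},\dots,Y_{\sm^{-1}(p_i)})$ is the block permutation; matching this against the semidirect-product multiplication rule in which $\sm$ permutes the diagonal blocks exactly this way confirms the isomorphism (one just has to be careful and consistent about whether $\sm$ or $\sm^{-1}$ appears, which is a bookkeeping matter fixed by the convention $P_\sm=(\de_{j,\sm(k)}1_{n_ir})$ already used in the paper).

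The main obstacle is the middle step: verifying that conjugating a single Jordan cell $A_a$ of size $n_ir$ by $X$ and landing inside $J_r(n_i)^{p_i}$ really does force each image block to be again a \emph{single} cell of the same size and that the off-diagonal blocks of $X$ vanish. This rests on the rigidity of the Jordan normal form together with the block-size hierarchy inside $H_i$ (all equal to $n_ir$ here), plus the standard fact that the Sylvester equation $XM = NX$ with $M,N$ having disjoint spectra has only the zero solution; once that is in place the rest is the same semidirect-product bookkeeping as in the classical Weyl group $N_{\GL{p}}(\text{diagonal})/(\cdot)\simeq\S_p$.
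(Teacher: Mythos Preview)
Your proposal is correct and follows essentially the same argument as the paper: probe with a block-diagonal $A$ whose blocks are Jordan cells with pairwise distinct eigenvalues, use similarity of $A$ and $B=XAX^{-1}$ to produce the permutation $\sm$, kill the off-pattern blocks of $X$ via the Sylvester equation $X_{j,k}A^{(k)}=B^{(j)}X_{j,k}$, and then vary $A$ freely to see each surviving diagonal block lies in $N_{\GL{n_ir}}(J_r(n_i))$. The only difference is that you spell out the semidirect-product multiplication for (2), whereas the paper simply invokes the uniqueness in (1); also, your expression $\tilde X=\diag(X_{\sm(1),1},\dots,X_{\sm(p_i),p_i})$ has the indexing off (the paper's $X_j=X_{j,\sm^{-1}(j)}$ is the correct placement so that $\diag(X_j)\cdot P_\sm$ reproduces $X$), which is exactly the bookkeeping you flag yourself.
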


\begin{example*}
We give an example of $P_{\sm}$ in Proposition \ref{prop:weyl-3}
in the case $p_{i}=3$ and for $\sm\in\S_{3}$, $(\sm(1),\sm(2),\sm(3))=(2,3,1)$.
In this case 
\[
P_{\sm}=\left(\begin{array}{ccc}
0 & 0 & 1_{n_{i}r}\\
1_{n_{i}r} & 0 & 0\\
0 & 1_{n_{i}r} & 0
\end{array}\right),\quad X=\left(\begin{array}{ccc}
0 & 0 & X_{1}\\
X_{2} & 0 & 0\\
0 & X_{3} & 0
\end{array}\right).
\]
\end{example*}
\emph{Proof of Proposition \ref{prop:weyl-3}}. We prove assertion
(1). Recall that $G_{i}=\GL{p_{i}n_{i}r},\;H_{i}=J_{r}(n_{i})^{p_{i}}$.
Take $X\in N_{G_{i}}(H_{i})$. Write $X$ as a block matrix $X=(X_{j,k})_{1\leq j,k\leq p_{i}}$
with blocks $X_{j,k}\in\mat(n_{i}r)$. Since $X\in N_{G_{i}}(H_{i})$,
for any $A=\diag(A^{(1)},\dots,A^{(p_{i})})\in H_{i},\ A^{(j)}\in J_{r}(n_{i})$
there exists $B=\diag(B^{(1)},\dots,B^{(p_{i})})\in H_{i},\ B^{(j)}\in J_{r}(n_{i})$
such that 
\begin{equation}
XAX^{-1}=B.\label{eq:weyl-4}
\end{equation}
 Note that each $A^{(j)}$ has the form
\[
A^{(j)}=\sum_{0\leq k<n_{i}}A_{k}^{(j)}\otimes\La^{k},\qquad A_{k}^{(j)}\in\mat(r),
\]
where $\La$ is the shift matrix of size $n_{i}$. Assume that $A^{(j)}$
is a Jordan cell with an eigenvalue $a^{(j)}$ for any $j$ and that
$a^{(1)},\dots,a^{(p_{i})}$ are all distinct. The Jordan normal form
of $B$ is obtained by taking each of $B^{(1)},\dots,B^{(p_{i})}$
to the Jordan normal form. Since $A$ and $B$ are similar, $B$ has
the Jordan normal form with a Jordan cell of size $n_{i}r$. This
situation appears when one of $B^{(1)},\dots,B^{(p_{i})}$ is similar
to this Jordan cell. It follows that there exits $\sm\in\S_{p_{i}}$
such that 
\[
A^{(k)}\sim B^{(\sm(k))}\qquad(1\leq k\leq p_{i}).
\]
Then from (\ref{eq:weyl-4}), we have the equation
\begin{equation}
X_{j,k}A^{(k)}=B^{(j)}X_{j,k}\qquad(1\leq j,k\leq p_{i}).\label{eq:weyl-6}
\end{equation}
Since $A^{(k)}$ and $B^{(j)}$ have no common eigenvalue if $j\neq\sm(k)$,
we have $X_{j,k}=0$ in this case. Put $X_{j}=X_{j,\sm^{-1}(j)}$,
then we have 
\[
X=\diag(X_{1},\dots,X_{p_{i}})\cdot P_{\sm},
\]
where $P_{\sm}$ denotes the permutation matrix belonging to $\GL{p_{i}n_{i}r}$,
which, represented in a block matrix as for $X$, has the $(j,k)$-block
$\de_{j,\sm(k)}\cdot1_{n_{i}r}$. From (\ref{eq:weyl-6}), we have
$X_{j}A^{(\sm^{-1}(j))}X_{j}^{-1}=B^{(j)}\in J_{r}(n_{i})$ for any
$A=\diag(A^{(1)},\dots,A^{(p_{i})})\in H_{i}$. It follows that $X_{j}\in N_{\GL{n_{i}r}}(J_{r}(n_{i}))$.
Hence assertion (1) of the proposition is shown. Assertion (2) results
from the uniqueness of the representation (\ref{eq:weyl-2}). $\qquad\square$

\bigskip

As a consequence of Proposition \ref{prop:weyl-3}, we have the following
result in the non-confluent case. 
\begin{cor}
For the group $H=H_{(1,\dots,1)}$, we have 
\[
N_{G}(H)\simeq H\rtimes\S_{n}
\]
 and the Weyl group $W=W_{(1,\dots,1)}=N_{G}(H)/H\simeq\S_{n}$.
\end{cor}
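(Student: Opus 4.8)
The plan is to specialize the two structural results just proved to the partition $\lm=(1,\dots,1)$ of $n$. In that case all parts equal $1$, so in the notation of \eqref{eq:weyl-1} we have $s=1$, $n_{1}=1$, $p_{1}=n$, and $r$ arbitrary. Hence $H=H_{(1,\dots,1)}$ coincides with the single factor $H_{1}=J_{r}(1)^{n}=(\GL r)^{n}$, and $G=G_{1}=\GL{nr}=\GL N$. Proposition \ref{prop:weyl-2} is then vacuous (the product has a single factor), so everything reduces to Proposition \ref{prop:weyl-3}.

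First I would record that $J_{r}(1)=\GL r$, so the inner normalizer appearing in \eqref{eq:weyl-3} is $N_{\GL{r}}(J_{r}(1))=N_{\GL r}(\GL r)=\GL r$ itself. Substituting $p_{i}=n$, $n_{i}=1$ into Proposition \ref{prop:weyl-3}(2) gives directly
\[
N_{G}(H)=N_{\GL{nr}}\bigl((\GL r)^{n}\bigr)\simeq (\GL r)^{n}\rtimes\S_{n}=H\rtimes\S_{n},
\]
where $\S_{n}$ acts by permuting the $n$ diagonal $\GL r$-blocks, and the embedding $\S_{n}\hookrightarrow G$ is $\sm\mapsto P_{\sm}=(\de_{j,\sm(k)}\cdot 1_{r})_{1\le j,k\le n}$, exactly the block permutation matrices described in the introduction. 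This proves the first assertion.

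For the Weyl group statement, I would simply quotient by $H$: since the semidirect product $N_{G}(H)=H\rtimes\S_{n}$ has $H$ as a normal subgroup with complement $\{P_{\sm}\mid\sm\in\S_{n}\}$, the quotient is
\[
W=W_{(1,\dots,1)}=N_{G}(H)/H\simeq\S_{n}.
\]
One small point worth a sentence is that the complement $\{P_{\sm}\}$ is genuinely a subgroup isomorphic to $\S_{n}$ (i.e.\ $P_{\sm}P_{\te}=P_{\sm\te}$ with the chosen convention), so the projection $N_{G}(H)\to\S_{n}$ restricts to an isomorphism on it; this is immediate from the block structure. I do not anticipate any real obstacle here: the corollary is a pure specialization, and the only thing to be careful about is bookkeeping the degenerate indices ($n_{i}=1$ kills the Jordan structure, leaving plain $\GL r$) and checking that $N_{\GL r}(\GL r)=\GL r$, which is trivial. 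All the substance was already carried out in Propositions \ref{prop:weyl-2} and \ref{prop:weyl-3}.
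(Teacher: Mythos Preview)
Your proposal is correct and is exactly the specialization the paper has in mind: the corollary is stated immediately after Proposition \ref{prop:weyl-3} as a direct consequence, and your identification $s=1$, $n_{1}=1$, $p_{1}=n$, $J_{r}(1)=\GL r$, $N_{\GL r}(\GL r)=\GL r$ is precisely what reduces \eqref{eq:weyl-3} to $H\rtimes\S_{n}$.
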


The next step is to determine $N_{\GL{n_{i}r}}(J_{r}(n_{i}))$. To
state the results, we introduce the polynomials $\mu_{i,j}(x)$ of
$x=(x_{1},x_{2},\dots)$ indexed by $(i,j)\in\Z_{\geq0}\times\Z_{\geq0}$.
Consider the formal power series
\[
M(x,T)=x_{1}T+x_{2}T^{2}+\cdots
\]
in $T$ and define $\mu_{i,j}(x)$ in terms of generating functions:
\[
M(x,T)^{i}=\sum_{j\geq0}\mu_{i,j}(x)T^{j}\quad(i=0,1,2,\dots).
\]
Here we put $M(x,T)^{0}=1$ by definition. It implies
\[
\mu_{0,j}(x)=\begin{cases}
1 & \quad(j=0),\\
0 & \quad(j\geq1).
\end{cases}
\]
From the definition, we have
\begin{equation}
\mu_{i,j}(x)=\begin{cases}
0 & \quad(i>j),\\
\sum_{j_{1}+\cdots+j_{i}=j;j_{1},\dots,j_{i}\geq1}x_{j_{1}}\cdots x_{j_{i}} & \quad(i\leq j)
\end{cases}\label{eq:weyl-6-1}
\end{equation}
and therefore $\mu_{i,j}(x)$ is a polynomial of $x$ which is a sum
of monomials of degree at most $j$. In particular we have $\mu_{i,i}(x)=x_{1}^{i}$. 
\begin{lem}
\label{lem:weyl-6} (1) $\mu_{i,j}(x)=0$ for $i>j$.

(2) For any integer $i_{1},i_{2}\geq0$, we have 
\[
\mu_{i_{1}+i_{2},j}(x)=\sum_{k}\mu_{i_{1},k}(x)\mu_{i_{2},j-k}(x).
\]

(3) For the two sets of variables $x=(x_{1},x_{2},\dots),y=(y_{1},y_{2},\dots)$,
define $z=(z_{1},z_{2},\dots)$ by
\begin{equation}
z_{j}=\sum_{k}x_{k}\mu_{k,j}(y),\label{eq:weyl-7}
\end{equation}
then we have 
\begin{equation}
\mu_{i,j}(z)=\sum_{k}\mu_{i,k}(x)\mu_{k,j}(y).\label{eq:weyl-8}
\end{equation}
\end{lem}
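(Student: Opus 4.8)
The plan is to treat all three parts as formal identities in the ring of formal power series in $T$ (with coefficients that are polynomials in the $x$'s, $y$'s, or $z$'s), rather than manipulating the combinatorial sums (\ref{eq:weyl-6-1}) directly. The point is that $\mu_{i,j}(x)$ is, by definition, the coefficient of $T^{j}$ in $M(x,T)^{i}$, and $M(x,T)$ has no constant term, so $M(x,T)^{i}$ begins in degree $\geq i$. Part (1) is then immediate: if $i>j$ the coefficient of $T^{j}$ in $M(x,T)^{i}$ vanishes because every monomial in $M(x,T)^{i}$ has degree at least $i>j$. For Part (2), I would simply expand the identity
\[
M(x,T)^{i_{1}+i_{2}}=M(x,T)^{i_{1}}\cdot M(x,T)^{i_{2}}
\]
and read off the coefficient of $T^{j}$ on both sides, using the Cauchy product of power series; the left side gives $\mu_{i_{1}+i_{2},j}(x)$ and the right side gives $\sum_{k}\mu_{i_{1},k}(x)\mu_{i_{2},j-k}(x)$. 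Note the sum over $k$ is finite (and in fact runs over $i_{1}\le k\le j-i_{2}$ by Part (1)), so there is no convergence issue.

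For Part (3), the key observation is that the substitution (\ref{eq:weyl-7}) is exactly a composition of formal power series. Indeed, with $M(y,T)=\sum_{k\ge1}y_{k}T^{k}$, substituting $T\mapsto M(y,T)$ into $M(x,T)=\sum_{k\ge1}x_{k}T^{k}$ — which is legitimate since $M(y,T)$ has zero constant term — yields
\[
M\bigl(x,M(y,T)\bigr)=\sum_{k\ge1}x_{k}\,M(y,T)^{k}=\sum_{k\ge1}x_{k}\sum_{j}\mu_{k,j}(y)T^{j}=\sum_{j}\Bigl(\sum_{k}x_{k}\mu_{k,j}(y)\Bigr)T^{j},
\]
which is precisely $M(z,T)$ for the $z$ defined by (\ref{eq:weyl-7}). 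Therefore $M(z,T)^{i}=M\bigl(x,M(y,T)\bigr)^{i}=M(x,M(y,T))^{i}$. Expanding the right-hand side, $M(x,S)^{i}=\sum_{k}\mu_{i,k}(x)S^{k}$, and then substituting $S=M(y,T)$ and using $M(y,T)^{k}=\sum_{j}\mu_{k,j}(y)T^{j}$, the coefficient of $T^{j}$ becomes $\sum_{k}\mu_{i,k}(x)\mu_{k,j}(y)$. Comparing with $M(z,T)^{i}=\sum_{j}\mu_{i,j}(z)T^{j}$ gives (\ref{eq:weyl-8}).

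The only genuinely delicate point is the bookkeeping needed to make the formal substitution rigorous — one must check that in each expression the inner series has no constant term so that infinite sums are never actually summed (each coefficient of a fixed power of $T$ is a finite sum), and that the interchange of summations used to pass from $\sum_{k}x_{k}\sum_{j}\mu_{k,j}(y)T^{j}$ to $\sum_{j}(\sum_{k}x_{k}\mu_{k,j}(y))T^{j}$ is justified by Part (1), which forces $\mu_{k,j}(y)=0$ once $k>j$ so that the coefficient of each $T^{j}$ involves only finitely many terms. I expect this to be the main (though routine) obstacle; once the formal-power-series framework is set up cleanly, all three identities fall out of associativity of multiplication and associativity of composition of power series with zero constant term.
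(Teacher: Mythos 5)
Your proposal is correct and follows essentially the same route as the paper: part (2) from expanding $M(x,T)^{i_{1}+i_{2}}=M(x,T)^{i_{1}}M(x,T)^{i_{2}}$, and part (3) by identifying the substitution (\ref{eq:weyl-7}) with the composition $M(z,T)=M(x,M(y,T))$ and comparing coefficients of $T^{j}$ in the $i$-th powers. The only difference is that you spell out the finiteness/interchange-of-summation bookkeeping, which the paper leaves implicit.
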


\begin{proof}
Assertion (2) is a consequence of the trivial identity $M(x,T)^{i_{1}+i_{2}}=M(x,T)^{i_{1}}M(x,T)^{i_{2}}$.
It is sufficient to compare the coefficients of $T^{i_{1}+i_{2}}$
of the both sides in the expansion with respect to the indeterminate
$T$. We show assertion (3). Multiply $T^{j}$ to the both sides of
(\ref{eq:weyl-7}) and take a sum with respect to $j$, then we have
\begin{align*}
M(z,T) & =\sum_{j}z_{j}T^{j}=\sum_{j}\left(\sum_{k}x_{k}\mu_{k,j}(y)\right)T^{j}\\
 & =\sum_{k}x_{k}\left(\sum_{j}\mu_{k,j}(y)T^{j}\right)=\sum_{k}x_{k}M(y,T)^{k}\\
 & =M(x,M(y,T)).
\end{align*}
Hence there holds the identity $M(z,T)^{i}=M(x,M(y,T))^{i}$ for any
integer $i\geq0$. Expanding the both sides in the power series of
$T$ and comparing the coefficients of $T^{j}$, we have the identity
(\ref{eq:weyl-8}).
\end{proof}
For a positive integer $p$, put 
\begin{equation}
W_{r}(p)=\Bigl\{\mu(c)=\bigl(\mu_{i,j}(c)1_{r}\bigr)_{0\leq i,j<p}\mid\ c=(c_{1},c_{2},\dots,c_{p-1})\in\C^{p-1},c_{1}\neq0\Bigr\}.\label{eq:weyl-5}
\end{equation}
Note that $\mu(c)$ is a nonsingular block matrix of size $pr$ whose
$(i,j)$-block is the scalar matrix $\mu_{i,j}(c)1_{r}$ of size $r$
and that $\mu_{1,j}(c)=c_{j}$ for $j=1,\dots,p-1$.
\begin{lem}
$W_{r}(p)$ is a connected linear subgroup of $\GL{pr}$ of dimension
$p-1$.
\end{lem}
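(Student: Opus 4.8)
The plan is to identify $W_r(p)$ with an explicit parameter space carrying a group law, and then read off all three required properties. Write $\Gamma_p:=\{c=(c_1,\dots,c_{p-1})\in\C^{p-1}\mid c_1\neq0\}$ and consider the map $\mu\colon\Gamma_p\to\GL{pr}$, $c\mapsto\mu(c)=(\mu_{i,j}(c)1_r)_{0\leq i,j<p}$. By Lemma \ref{lem:weyl-6}(1) the matrix $\mu(c)$ is block upper triangular with diagonal blocks $\mu_{i,i}(c)1_r=c_1^i1_r$, so $\det\mu(c)=c_1^{rp(p-1)/2}\neq0$; thus $\mu$ takes values in $\GL{pr}$ and $W_r(p)=\mu(\Gamma_p)$.

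The first step is to show $\mu$ turns a natural group law on $\Gamma_p$ into matrix multiplication, so that $W_r(p)$ is a subgroup. Define $c*c':=c''$ by $c''_j:=\sum_k c_k\mu_{k,j}(c')$ for $1\le j\le p-1$; this is a finite sum involving only $c_1,\dots,c_{p-1}$ and $c'_1,\dots,c'_{p-1}$, because $\mu_{k,j}(c')=0$ for $k>j$ (Lemma \ref{lem:weyl-6}(1)) and, by (\ref{eq:weyl-6-1}), $\mu_{i,j}$ with $i,j<p$ depends only on its first $p-1$ arguments; moreover $c''_1=c_1c'_1\ne0$, so $c*c'\in\Gamma_p$. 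Applying Lemma \ref{lem:weyl-6}(3) with $c,c'$ padded by zeros (which changes nothing, by the same two facts) gives $\mu_{i,j}(c*c')=\sum_k\mu_{i,k}(c)\mu_{k,j}(c')$ for $0\le i,j<p$, i.e.\ $\mu(c*c')=\mu(c)\mu(c')$. Since $\mu_{i,j}(e)=\delta_{ij}$ for $e=(1,0,\dots,0)$ we have $\mu(e)=1_{pr}$; and given $c\in\Gamma_p$ one solves $c*c'=e$ recursively for $c'$ — expanding, $c''_j=c_1c'_j+(\text{a polynomial in }c_1,\dots,c_{p-1},c'_1,\dots,c'_{j-1})$, so $c'_1=c_1^{-1}$ and each later $c'_j$ is uniquely forced — obtaining $c'\in\Gamma_p$ with $\mu(c)\mu(c')=1_{pr}$, whence $\mu(c)^{-1}=\mu(c')\in W_r(p)$. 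Therefore $W_r(p)$ is a subgroup of $\GL{pr}$.

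Next I would check that $W_r(p)$ is Zariski-closed in $\GL{pr}$, which is the content of being a \emph{linear} subgroup: a matrix $g=(g_{i,j})_{0\le i,j<p}$ with blocks $g_{i,j}\in\mat(r)$ lies in $W_r(p)$ iff every block is scalar, $g_{i,j}=\gamma_{i,j}1_r$, with $\gamma_{0,0}=1$, $\gamma_{0,j}=0$ for $j\ge1$, and $\gamma_{i,j}=\mu_{i,j}(\gamma_{1,1},\dots,\gamma_{1,p-1})$ for $2\le i<p$ — polynomial conditions — the remaining requirement $\gamma_{1,1}\ne0$ being automatic from $\det g\ne0$ when $p\ge2$ (while $W_r(1)=\{1_r\}$ is trivial). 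Hence $W_r(p)$ is a closed subgroup of $\GL{pr}$, i.e.\ a linear algebraic group. Finally $\mu\colon\Gamma_p\to W_r(p)$ is a bijective morphism whose inverse $g\mapsto(\gamma_{1,1},\dots,\gamma_{1,p-1})=(\mu_{1,1}(c),\dots,\mu_{1,p-1}(c))$ is again polynomial, so it is an isomorphism of varieties; since $\Gamma_p$ is a nonempty Zariski-open subset of $\C^{p-1}$ it is irreducible of dimension $p-1$, and therefore $W_r(p)$ is irreducible — in particular connected — of dimension $p-1$. (Connectedness is also visible directly: $\Gamma_p\cong\C^\times\times\C^{p-2}$ is connected and $\mu$ is continuous.)

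The one point demanding genuine care is the truncation bookkeeping in the first step: Lemma \ref{lem:weyl-6}(3) involves infinitely many variables, so one must confirm that restricting every index to $\{0,\dots,p-1\}$ and every parameter to $\C^{p-1}$ leaves all the relevant sums unchanged — which it does, because $\mu_{k,j}=0$ for $k>j$ suppresses all contributions with index $\ge p$, and $\mu_{i,j}$ with $i,j<p$ ignores the variables beyond the $(p-1)$-st. Beyond this, every step is a direct consequence of the identities for $\mu_{i,j}$ recorded in Lemma \ref{lem:weyl-6}.
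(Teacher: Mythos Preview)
Your proof is correct and follows essentially the same approach as the paper's: both hinge on Lemma~\ref{lem:weyl-6}(3) to establish closure under multiplication and on the polynomial relations $y_{i,j}=\mu_{i,j}(y_{1,1},\dots,y_{1,p-1})$ to show Zariski-closedness. The paper first reduces to the case $r=1$ and leaves inverses, connectedness, and the dimension count implicit, whereas you work directly with general $r$ and spell these out via the explicit isomorphism with $\Gamma_p\cong\C^\times\times\C^{p-2}$; your attention to the truncation bookkeeping is also more careful than the paper's.
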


\begin{proof}
Looking at the form of the elements of $W_{r}(p)$, it is sufficient
to show the assertion in the case $r=1$. Put $W(p):=W_{1}(p)\subset\GL p$.
Let $(y_{i,j})$ be the coordinates of $\GL p$. Then $W(p)$ is defined
by the algebraic relations
\[
y_{i,j}-\mu_{i,j}(y_{1,1},\dots,y_{1,n-1})=0\quad(0\leq i,j<p)
\]
and hence it is a closed subset of $\GL p$. Let us see that $W(p)$
is a subgroup of $\GL p$. To this end, it is sufficient to show that,
for any $a=(a_{1},\dots,a_{p-1}),\,b=(b_{1},\dots,b_{p-1})\in\C^{p-1}$,
there exists $c=(c_{1},\dots,c_{p-1})$ such that $\mu(a)\mu(b)=\mu(c)$.
This condition is written using the entries of the matrix as
\begin{equation}
\sum_{0\leq k<p}\mu_{i,k}(a)\mu_{k,j}(b)=\mu_{i,j}(c)\quad(0\leq i,j<p).\label{eq:weyl-9}
\end{equation}
In particular, for $i=1$, we must have 
\[
\sum_{1\leq k\leq j}a_{k}\cdot\mu_{k,j}(b)=c_{j}\quad(1\leq j<p).
\]
So if we determine $c$ by this condition from $a,b$, we must show
the relation (\ref{eq:weyl-9}) holds. But this is just assertion
(3) of Lemma \ref{lem:weyl-6}.
\end{proof}
Now the structure of the normalizer of $H_{\lm}=\prod_{1\leq i\leq s}J_{r}(n_{i})^{p_{i}}$
for the partition $\lm$ in (\ref{eq:weyl-1}) is described as follows.
\begin{thm}
\label{thm:weyl-main-1}(1) For the subgroup $H_{\lm}=\prod_{1\leq i\leq s}J_{r}(n_{i})^{p_{i}}$
of $G=\GL N$, its normalizer can be expressed as 
\[
N_{G}(H_{\lm})=H_{\lm}\rtimes\left(\prod_{1\leq i\leq s}W_{r}(n_{i})^{p_{i}}\rtimes\cP_{i}\right),
\]
where $\cP_{i}$ is the group isomorphic to the permutation group
$\S_{p_{i}}$ consisting of the permutation matrices $P_{\sm}=(\de_{j,\sm(k)}\cdot1_{n_{i}r})_{1\leq j,k\leq p_{i}}$
of size $p_{i}n_{i}r$ for any $\sm\in\S_{p_{i}}$.

(2) $\cP_{i}$ acts on $\prod_{p_{i}\text{times}}W_{r}(n_{i})$ as
a permutation of diagonal blocks.
\end{thm}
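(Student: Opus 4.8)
The plan is to reduce the theorem, by means of Propositions~\ref{prop:weyl-2} and~\ref{prop:weyl-3} which are already in hand, to the single identity
\[
N_{\GL{pr}}(J_r(p))=J_r(p)\rtimes W_r(p)\qquad(p\ge 1),
\]
and then to reassemble the pieces. Granting this identity, substitute it into $N_G(H_\lm)\simeq\prod_i N_{G_i}(H_i)$ (Proposition~\ref{prop:weyl-2}) and into $N_{G_i}(H_i)\simeq\bigl(\prod_{p_i}N_{\GL{n_i r}}(J_r(n_i))\bigr)\rtimes\cP_i$, $\cP_i\simeq\S_{p_i}$ (Proposition~\ref{prop:weyl-3}). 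A finite product of semidirect products satisfies $\prod_k(A_k\rtimes B_k)\simeq\bigl(\prod_k A_k\bigr)\rtimes\bigl(\prod_k B_k\bigr)$ with componentwise action, so the inner product collapses to $\prod_{p_i}(J_r(n_i)\rtimes W_r(n_i))\simeq H_i\rtimes W_r(n_i)^{p_i}$; and since $\cP_i$ acts by permuting the $p_i$ diagonal blocks it normalizes $H_i$ and $W_r(n_i)^{p_i}$ separately, with $H_i\cap(W_r(n_i)^{p_i}\rtimes\cP_i)=\{1\}$ (using $J_r(n_i)\cap W_r(n_i)=\{1\}$ componentwise and the block form of $\cP_i$), so the iterated semidirect product regroups as $H_i\rtimes(W_r(n_i)^{p_i}\rtimes\cP_i)$. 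Taking the product over $i$ gives precisely $H_\lm\rtimes\prod_i(W_r(n_i)^{p_i}\rtimes\cP_i)$, and assertion~(2) is exactly the block-permutation action carried along from Proposition~\ref{prop:weyl-3}. So the real content is the displayed identity, which I would isolate as a lemma.

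The key observation for the lemma is that $J_r(p)$ is a centralizer. Using $J_r(p)\simeq(R[w]/(w^p))^{\times}$ with $R=\mat(r)$, let $\omega\in\mat(pr)$ be the matrix corresponding to multiplication by $w$; then $\omega^p=0$, $\C[\omega]\simeq\C[w]/(w^p)$, and $J_r(p)=C_{\GL{pr}}(\omega)$ — indeed the centralizer of $\omega$ in $\mat(pr)$ is the algebra of block upper triangular Toeplitz matrices $R[w]/(w^p)$, and intersecting with $\GL{pr}$ gives $J_r(p)$. Moreover $J_r(p)$ is Zariski-dense in $R[w]/(w^p)$ (it contains $\GL r$ and all $1+E_{a,b}\,\omega^k$), so the double centralizer computation gives $C_{\mat(pr)}(J_r(p))=C_{\mat(pr)}(R[w]/(w^p))=\C[\omega]$.

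Next I would analyze $W_r(p)$. A direct computation from the definition of the polynomials $\mu_{i,j}$ — together with Lemma~\ref{lem:weyl-6}(3), which identifies $W_r(p)$ via $c\mapsto\mu(c)$ with the group of substitutions $w\mapsto c_1w+\dots+c_{p-1}w^{p-1}$ ($c_1\ne 0$) under composition — shows that $\mu(c)\,\omega\,\mu(c)^{-1}$ is again a regular nilpotent element of $\C[\omega]$ (a polynomial in $\omega$ with zero constant term and nonzero linear coefficient), hence has the same centralizer as $\omega$; therefore
\[
\mu(c)\,J_r(p)\,\mu(c)^{-1}=\mu(c)\,C_{\GL{pr}}(\omega)\,\mu(c)^{-1}=C_{\GL{pr}}\!\bigl(\mu(c)\,\omega\,\mu(c)^{-1}\bigr)=J_r(p),
\]
so $W_r(p)\subseteq N_{\GL{pr}}(J_r(p))$. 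The same computation shows that conjugation defines a bijection $W_r(p)\to\auto_{\C\text{-alg}}(\C[\omega])$; equivalently, $\mu(c)\mapsto\mu(c)\,\omega\,\mu(c)^{-1}$ maps $W_r(p)$ bijectively onto the set of regular nilpotent polynomials in $\omega$. In particular the kernel of this conjugation action, which is $\{g\in W_r(p):g\omega=\omega g\}=W_r(p)\cap J_r(p)$, is trivial (the only such substitution is the identity, forcing $\mu(c)=1_{pr}$).

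For the reverse inclusion, take $g\in N_{\GL{pr}}(J_r(p))$. Then $g$ normalizes $C_{\mat(pr)}(J_r(p))=\C[\omega]$, so $g\,\omega\,g^{-1}\in\C[\omega]$; being $\GL{pr}$-conjugate to the regular nilpotent $\omega$ it is itself a regular nilpotent polynomial in $\omega$, hence equals $\mu(c)\,\omega\,\mu(c)^{-1}$ for a suitable $\mu(c)\in W_r(p)$ by the surjectivity just noted. Then $\mu(c)^{-1}g$ commutes with $\omega$, so $\mu(c)^{-1}g\in C_{\GL{pr}}(\omega)=J_r(p)$ and $g\in W_r(p)\,J_r(p)$. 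Together with $J_r(p)\trianglelefteq N_{\GL{pr}}(J_r(p))$ (automatic for a normalizer) and $J_r(p)\cap W_r(p)=\{1\}$, this yields $N_{\GL{pr}}(J_r(p))=J_r(p)\rtimes W_r(p)$, which completes the lemma and hence the theorem. The step I expect to be the main obstacle is the explicit verification that the matrices $\mu(c)$ realize, by conjugation, \emph{exactly} the automorphism group of $\C[w]/(w^p)$ acting on $\C[\omega]$ — this is the combinatorial heart, where the generating-function identities for $\mu_{i,j}$ and Lemma~\ref{lem:weyl-6} enter; everything else is either formal group theory or the standard double centralizer identity for $R[w]/(w^p)$ inside $\mat(pr)$.
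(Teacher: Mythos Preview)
Your proposal is correct and takes a genuinely different route from the paper's. Both arguments share the reduction via Propositions~\ref{prop:weyl-2} and~\ref{prop:weyl-3} to the key lemma $N_{\GL{pr}}(J_r(p))=J_r(p)\rtimes W_r(p)$, but the proofs of that lemma diverge completely. The paper proceeds by a direct matrix reduction in three stages (Lemmas~\ref{lem:weyl-8}--\ref{lem:weyl-10}): first a lengthy induction, probing with carefully chosen elements of $J_r(n)$ to show that any $X$ in the normalizer can be made block upper triangular with diagonal diagonal-blocks after left multiplication by some $h\in J_r(n)$; then a further normalization to make all blocks scalar with first row $(1_r,0,\dots,0)$; and finally a citation to \cite{Kimura-Koitabashi} to identify the residual matrix with some $\mu(c)$. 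Your approach instead exploits the identifications $J_r(p)=C_{\GL{pr}}(\omega)$ and $C_{\mat(pr)}(J_r(p))=\C[\omega]$, so that any $g$ in the normalizer must conjugate $\omega$ to a regular nilpotent element of $\C[\omega]$, which is then matched by some $\mu(c)$ via the realization of $W_r(p)$ as $\auto(\C[T]/(T^p))$ (a fact the paper records only as a closing Remark, not as a tool). Your argument is shorter and more structural, avoiding both the page-long induction of Lemma~\ref{lem:weyl-8} and the external citation in Lemma~\ref{lem:weyl-10}; the paper's approach, by contrast, is entirely self-contained at the level of explicit matrix entries and requires no appeal to double-centralizer facts. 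The step you flag as the ``combinatorial heart''---that conjugation by $\mu(c)$ sends $\omega$ into $\C[\omega]$---is in fact immediate once one notes that $\mu(c)=1_r\otimes\mu'(c)$ with $\mu'(c)^{T}$ the matrix of the algebra automorphism $T\mapsto M(c,T)$ of $\C[T]/(T^p)$ in the monomial basis, so that $\mu'(c)^{-1}\Lambda\,\mu'(c)=M(c,\Lambda)$; this is exactly what Lemma~\ref{lem:weyl-6}(3) encodes.
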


\begin{prop}
\label{prop:weyl-group}The Weyl group associated with $\hlam$ is
given by
\[
W_{\lm}=N_{G}(H_{\lm})/H_{\lm}\simeq\prod_{1\leq i\leq s}W_{r}(n_{i})^{p_{i}}\rtimes\cP_{i}.
\]
\end{prop}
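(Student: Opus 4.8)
The plan is to deduce Proposition \ref{prop:weyl-group} directly from Theorem \ref{thm:weyl-main-1} by taking the quotient by $H_{\lm}$, so the work is essentially bookkeeping about semidirect products and their quotients. First I would record the general fact that if $G = N \rtimes K$ is an (internal) semidirect product of a normal subgroup $N$ and a subgroup $K$, then the quotient map $G \to G/N$ restricts to an isomorphism $K \xrightarrow{\ \sim\ } G/N$; more precisely, $G/N$ is canonically isomorphic to $K$ with its given group structure. Applying this with $G$ replaced by $N_G(H_\lm)$, $N$ by $H_\lm$ (which is normal in $N_G(H_\lm)$ by the very definition of the normalizer), and $K$ by $\prod_{1\leq i\leq s}\bigl(W_r(n_i)^{p_i}\rtimes\cP_i\bigr)$, Theorem \ref{thm:weyl-main-1}(1) gives
\[
W_\lm = N_G(H_\lm)/H_\lm \simeq \prod_{1\leq i\leq s}\bigl(W_r(n_i)^{p_i}\rtimes\cP_i\bigr),
\]
which is the claimed formula.

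The one point that needs care — and which I expect to be the only real content beyond invoking the previous theorem — is checking that the complement $K$ appearing in Theorem \ref{thm:weyl-main-1} really is a subgroup of $N_G(H_\lm)$ (not merely a set of coset representatives), so that the abstract semidirect-product statement applies and the induced structure on $W_\lm$ is the asserted semidirect product $\prod_i W_r(n_i)^{p_i}\rtimes\cP_i$. Concretely I would verify: (a) each $W_r(n_i)$ is a subgroup of $\GL(n_i r)$ normalizing $J_r(n_i)$, which is exactly the content of the lemma on $W_r(p)$ together with the inclusion $W_r(n_i)\subset N_{\GL(n_i r)}(J_r(n_i))$ used in the proof of Theorem \ref{thm:weyl-main-1}; (b) the block-permutation matrices in $\cP_i$ normalize $H_i = J_r(n_i)^{p_i}$ and conjugation by $P_\sm$ carries $W_r(n_i)^{p_i}$ to itself by permuting the diagonal blocks, which is Theorem \ref{thm:weyl-main-1}(2); hence $W_r(n_i)^{p_i}\rtimes\cP_i$ is a genuine subgroup of $G_i$ normalizing $H_i$; (c) the block-diagonal embedding $\prod_i G_i \hookrightarrow G$ of Proposition \ref{prop:weyl-2} turns $\prod_i\bigl(W_r(n_i)^{p_i}\rtimes\cP_i\bigr)$ into a subgroup of $N_G(H_\lm)$ that meets $H_\lm$ trivially and, together with $H_\lm$, generates $N_G(H_\lm)$. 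Items (a)–(c) are all immediate from the statements and proofs already given, so this is routine.

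Finally I would assemble the isomorphism: by (c) and the definition of internal semidirect product, $N_G(H_\lm) = H_\lm \rtimes K$ with $K = \prod_i\bigl(W_r(n_i)^{p_i}\rtimes\cP_i\bigr)$; by the general fact recalled above, the projection $N_G(H_\lm)\to N_G(H_\lm)/H_\lm$ restricts to a group isomorphism $K\xrightarrow{\sim} W_\lm$. Since the group law on $K$ is precisely the semidirect-product law $\prod_i\bigl(W_r(n_i)^{p_i}\rtimes\cP_i\bigr)$ with $\cP_i$ permuting the diagonal blocks of $W_r(n_i)^{p_i}$, this yields
\[
W_\lm \simeq \prod_{1\leq i\leq s} W_r(n_i)^{p_i}\rtimes\cP_i,
\]
completing the proof. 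The main obstacle is purely expository: making sure the reader sees that Theorem \ref{thm:weyl-main-1} already encodes the semidirect-product decomposition at the level of $N_G(H_\lm)$, so that passing to the quotient by the normal factor $H_\lm$ is automatic and no new computation is required.
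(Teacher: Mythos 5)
Your proposal is correct and matches the paper's (implicit) argument: the paper gives no separate proof of Proposition \ref{prop:weyl-group}, treating it as the immediate consequence of Theorem \ref{thm:weyl-main-1} obtained by quotienting the semidirect product $N_{G}(H_{\lm})=H_{\lm}\rtimes\bigl(\prod_{i}W_{r}(n_{i})^{p_{i}}\rtimes\cP_{i}\bigr)$ by its normal factor $H_{\lm}$. Your extra verifications (a)--(c) are exactly the content already established in the proof of that theorem, so nothing new is needed.
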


\subsection{Proof of Theorem \ref{thm:weyl-main-1}}

Taking account of Propositions \ref{prop:weyl-2} and \ref{prop:weyl-3},
it is sufficient to show the following proposition to complete the
proof of Theorem \ref{thm:weyl-main-1}. We write $n_{i}$ as $n$
in this subsection for the sake of simplicity of notation.
\begin{prop}
\label{prop:weyl-7} We have the isomorphism
\[
N_{\GL{nr}}(J_{r}(n))\simeq J_{r}(n)\rtimes W_{r}(n).
\]
Hence we have
\[
N_{\GL{nr}}(J_{r}(n))/J_{r}(n)\simeq W_{r}(n).
\]
\end{prop}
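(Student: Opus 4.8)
The plan is to replace the group $J_r(n)$ by the associative algebra it spans and then reduce everything to a description of the $\C$-algebra automorphisms of that algebra. Recall from Section~\ref{subsec:Jordan-group} that $J_r(n)\simeq(R[w]/(w^n))^{\times}$ with $R=\mat(r)$; put $A:=R[w]/(w^n)$, regarded inside $\mat(nr)$ as the linear span of the matrices $\sum_{0\le i<n}h_i\otimes\La^i$, so that $A\simeq\mat(r)\otimes S$ with $S:=\C[w]/(w^n)$ and $w=1_r\otimes\La$. Then $J_r(n)$ is precisely the principal Zariski-open subset of the affine space $A$ defined by $\det h_0\ne0$, hence $A$ is the Zariski closure of $J_r(n)$. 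Because conjugation $x\mapsto gxg^{-1}$ by $g\in\GL{nr}$ is an algebra automorphism of $\mat(nr)$ and a homeomorphism for the Zariski topology, $gJ_r(n)g^{-1}=J_r(n)$ forces $gAg^{-1}=A$; conversely, if $gAg^{-1}=A$ then $\Ad(g)|_A\in\auto_\C(A)$ and so carries $A^{\times}=J_r(n)$ onto itself. Thus
\[
N_{\GL{nr}}(J_r(n))=\{g\in\GL{nr}\mid gAg^{-1}=A\},
\]
and $\Phi\colon g\mapsto\Ad(g)|_A$ is a homomorphism $N_{\GL{nr}}(J_r(n))\to\auto_\C(A)$. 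Its kernel is the group of units of $\mat(nr)$ centralizing $A$; such an element commutes with $\mat(r)\otimes 1_n$, hence equals $1_r\otimes B$, and commutes with $w=1_r\otimes\La$, hence $B$ is a polynomial in $\La$ — so it lies in $S^{\times}\subset J_r(n)$. Therefore $\ker\Phi\subset J_r(n)$.

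Next I would pin down $\auto_\C(A)$. Any $\phi\in\auto_\C(A)$ preserves the centre $Z(A)=S$ and restricts there to a $\C$-algebra automorphism of $S=\C[w]/(w^n)$; since the image of $w$ must again generate the maximal ideal with the same nilpotency order, this automorphism sends $w\mapsto c_1w+\dots+c_{n-1}w^{n-1}$ with $c_1\ne0$, and conversely every such assignment defines an automorphism of $S$. A direct computation with the generating-function definition of the $\mu_{i,j}$ (using Lemma~\ref{lem:weyl-6}) shows that conjugation by $\mu(c)\in W_r(n)$ preserves $A$, fixes $R$ pointwise, and sends $w$ to a polynomial in $w$ with nonzero linear term, and that as $c$ varies, $\Phi(\mu(c))|_S$ runs over all of $\auto_\C(S)$. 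Hence, given $\phi$, pick $\mu(c)\in W_r(n)$ with $\Phi(\mu(c))$ inducing on $S$ the same automorphism as $\phi$. Then $\Phi(\mu(c))^{-1}\circ\phi$ fixes $Z(A)=S$ pointwise, i.e.\ it is an $S$-algebra automorphism of $A\simeq\mat(r,S)$; since $S$ is a local ring (so $\mathrm{Pic}(S)=0$), every such automorphism is inner, say $\Ad(v)|_A$ with $v\in\mat(r,S)^{\times}=J_r(n)$. Therefore $\phi=\Phi(\mu(c))\circ\Ad(v)|_A=\Phi(\mu(c)v)$. In particular $\Phi$ is surjective, and (from the first assertion of this paragraph) $W_r(n)\subset N_{\GL{nr}}(J_r(n))$.

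Finally I would assemble the semidirect product. The subgroup $J_r(n)$ is normal in $N_{\GL{nr}}(J_r(n))$ by definition of the normalizer, and $W_r(n)$ is a subgroup of the normalizer by the last paragraph. For $g$ in the normalizer, the argument above produces $\mu(c)\in W_r(n)$ and $v\in J_r(n)$ with $\Phi(g)=\Phi(\mu(c)v)$, whence $(\mu(c)v)^{-1}g\in\ker\Phi\subset J_r(n)$ and so $g\in\mu(c)vJ_r(n)=\mu(c)J_r(n)\subset W_r(n)\cdot J_r(n)$; thus $N_{\GL{nr}}(J_r(n))=J_r(n)\cdot W_r(n)$. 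To see $J_r(n)\cap W_r(n)=\{1_{nr}\}$, write $\mu(c)=\sum_i h_i\otimes\La^i\in J_r(n)$: comparing the $(0,0)$- and $(1,1)$-blocks gives $h_0=1_r$ and $\mu_{1,1}(c)=c_1=1$, and comparing the blocks just above the diagonal gives inductively $h_1=h_2=\dots=0$ and $c_2=c_3=\dots=0$, so $\mu(c)=1_{nr}$. Hence $N_{\GL{nr}}(J_r(n))=J_r(n)\rtimes W_r(n)$, and the quotient assertion follows. The main obstacle is the structural description of $\auto_\C(A)$ — concretely, the facts that $\auto_\C(\C[w]/(w^n))$ consists exactly of the substitutions $w\mapsto c_1w+\dots+c_{n-1}w^{n-1}$ with $c_1\ne0$, that these are realized by the explicit matrices $\mu(c)$, and that every $S$-algebra automorphism of $\mat(r,S)$ is inner over the local ring $S=\C[w]/(w^n)$; the remaining reductions (Zariski density, the centralizer computation, the triangular comparison of blocks) are routine.
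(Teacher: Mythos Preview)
Your argument is correct and is genuinely different from the paper's. The paper proceeds by brute-force block computation: in Lemmas~\ref{lem:weyl-8}--\ref{lem:weyl-10} it feeds carefully chosen test elements of $J_r(n)$ (Jordan cells with distinct eigenvalues, a nonsingular diagonal $A_1$) into the relation $XA=BX$ and reads off, block by block along successive anti-diagonals, that $X$ is block upper-triangular with scalar blocks, then normalizes by an element of $J_r(n)$ and finally invokes the $r=1$ result of \cite{Kimura-Koitabashi} to identify the remaining factor with $\mu(c)$. Your route instead passes through the algebra $A=\mat(r)\otimes S$ and the homomorphism $\Phi\colon N_{\GL{nr}}(J_r(n))\to\auto_\C(A)$: the kernel computation is a two-line centralizer argument, and the structure of $\auto_\C(A)$ is obtained by first restricting to the centre $S$ (giving $\auto_\C(S)$, realized by the $\mu(c)$) and then invoking Skolem--Noether over the Artinian local ring $S$ to see that the remaining $S$-linear part is inner. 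What you gain is a short, conceptual proof that explains the appearance of $W_r(n)\simeq\auto_\C(S)$ (cf.\ the Remark following Lemma~\ref{lem:weyl-10}) rather than discovering it at the end of a long calculation; what the paper's approach buys is complete self-containment, needing nothing beyond elementary linear algebra, whereas you must import the fact that $S$-algebra automorphisms of $\mat(r,S)$ are inner when $S$ is local. Both the verification that $\Ad(\mu(c))$ preserves $A$ and sends $w$ into $\C[w]$, and the intersection $J_r(n)\cap W_r(n)=\{1\}$, are indeed routine once one unwinds the generating-function definition of $\mu_{i,j}$, as you indicate.
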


The proof of this proposition is reduced to showing Lemmas \ref{lem:weyl-8},
\ref{lem:weyl-9} and \ref{lem:weyl-10} below.
\begin{lem}
\label{lem:weyl-8}For $X\in N_{\GL{nr}}(J_{r}(n))$, there exists
$h:=\diag(h_{0},\dots,h_{0})\in J_{r}(n)$ such that $Y:=h^{-1}X=(Y_{i,j})_{0\leq i,j<n},Y_{i,j}\in\mat(r)$,
is a block upper triangular matrix whose diagonal blocks are diagonal
matrices of size $r$, namely, $Y_{i,j}=0\;(i>j)$ and $Y_{i,i}$
$(0\leq i<n)$ are diagonal matrices.
\end{lem}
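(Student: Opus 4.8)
The goal is to show that any $X\in N_{\GL{nr}}(J_r(n))$ can, after multiplying on the left by a suitable block-scalar element of $J_r(n)$, be brought to a block upper triangular form with diagonal scalar-diagonal blocks.

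I would argue via the conjugation action of $X$ on the Lie algebra of $J_r(n)$. Recall $J_r(n)\simeq(R[w]/(w^n))^\times$ with $R=\mat(r)$; its Lie algebra $\mathfrak{j}_r(n)$ is $R[w]/(w^n)$ with the bracket induced by commutator. Since $X\in N_{\GL{nr}}(J_r(n))$, the map $\Ad(X)\colon A\mapsto XAX^{-1}$ on $\GL{nr}$ preserves $J_r(n)$, hence preserves $\mathfrak{j}_r(n)=\mathrm{Lie}(J_r(n))$. I would first pin down the conjugation action on the nilpotent shift part, specifically on $w\cdot 1_r = \La\otimes 1_r \in\mathfrak{j}_r(n)$ and on the one-parameter subgroups $\{1_r+t\,w^k\otimes c\}$ generated by $w^k\otimes c$ for $c\in R$. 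The key structural fact is that $w\otimes 1_r$ is (up to the obvious grading) the ``most nilpotent'' element: $w^{n-1}\ne 0$, $w^n=0$, and the center of $\mathfrak{j}_r(n)$ is exactly $w^{n-1}R$. So $\Ad(X)$ must send the one-block-scalar elements $w^k\otimes 1_r$ to elements of $\mathfrak{j}_r(n)$ with compatible nilpotency degrees, forcing an upper triangular pattern on $X$ in the block coordinates.

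Concretely, the plan is: (i) Observe that conjugation by $X$ preserves the descending central-type filtration $\mathfrak{j}_r(n)\supset w\mathfrak{j}_r(n)+wR \supset \cdots$, i.e. the subspaces $w^k R[w]/(w^n)$ for $k=1,\dots,n-1$ (these are the only subspaces of their dimension that are ideals of the associative algebra stable under the relevant structure, or more simply they are characterized intrinsically as powers of the Jacobson radical — and $X$-conjugation is an algebra automorphism of $R[w]/(w^n)$ when restricted appropriately, or at least sends the group $J^\circ_r(n)$ and its lower central series to itself). (ii) Deduce from stability of these filtration steps that, writing $X=(X_{i,j})$ in $r\times r$ blocks, the relation $X A = B X$ for all $A=1_r + w\otimes c + \cdots$ with $B\in J_r(n)$ forces $X_{i,j}=0$ for $i>j$: the point is that $XAX^{-1}$ being block-upper-triangular-with-equal-diagonal-blocks for the specific $A=1_r+w\otimes 1_r$ forces the subdiagonal blocks of $X$ to vanish; this is the same type of ``intertwining forces vanishing'' computation already used in the proofs of Propositions \ref{prop:weyl-2} and \ref{prop:weyl-3}. (iii) Having $X$ block upper triangular, conjugation preserves $J_r(n)$ and in particular the diagonal-block-reduction of $w\otimes 1_r$ (whose $\Ad(X)$-image lies in $\mathfrak{j}_r(n)$ and still has $w$-degree $1$), which forces the diagonal blocks $X_{0,0},\dots,X_{n-1,n-1}$ to all be equal up to the action of $R=\mat(r)$ centralizing the shift — more precisely, $\Ad(X)$ restricted to the ``Cartan-like'' part $\mat(r)$ of $\mathfrak{j}_r(n)$ (the degree-$0$ piece) is an automorphism of $\mat(r)$, hence inner, given by conjugation by some $h_0\in\GL r$, so that $h:=\diag(h_0,\dots,h_0)$ satisfies the claim that $Y=h^{-1}X$ has scalar (indeed diagonal, after a further obvious diagonalization step available because $Y_{i,i}$ must be a diagonal matrix — this last refinement comes from the fact that $\Ad(Y)$ must then fix the degree-$0$ piece pointwise, forcing each $Y_{i,i}$ to centralize $\mat(r)$, hence be a scalar; the strengthening to ``diagonal'' as literally stated in the lemma presumably is a typo for ``scalar'' or uses a normalization I would track carefully).

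The main obstacle is step (iii): controlling the diagonal blocks. The subdiagonal vanishing in step (ii) is a routine eigenvalue/intertwining argument analogous to what precedes in the paper. But extracting that $h^{-1}X$ has the stated diagonal-block structure requires understanding precisely how $\Ad(X)$ acts on the degree-$0$ summand $\mat(r)\hookrightarrow\mathfrak{j}_r(n)$ and how it interacts with the grading induced by $w$. One must verify that $\Ad(X)$ is forced to preserve a splitting of $\mathfrak{j}_r(n)$ into homogeneous pieces $w^kR$ up to lower-degree terms, extract the induced automorphism of $\mat(r)$ on the degree-$0$ piece (Skolem–Noether: it is $\mathrm{Ad}(h_0)$ for some $h_0\in\GL r$), and then check that after left-multiplying by $\diag(h_0,\dots,h_0)$ the resulting $Y$ has each diagonal block in the centralizer of $\mat(r)$ in $\mat(r)$, i.e. a scalar matrix. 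I would need to be careful that the left multiplication by $h$ does not disturb the block-upper-triangularity already achieved — it does not, since $h$ is block-scalar — and that the notion of ``diagonal'' in the lemma statement matches; I would present this as reducing $Y_{i,i}$ to a diagonal (scalar) matrix by the Skolem–Noether/centralizer argument, then hand off the finer analysis of the strictly-upper-triangular blocks to Lemmas \ref{lem:weyl-9} and \ref{lem:weyl-10}.
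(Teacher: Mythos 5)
Your strategy is genuinely different from the paper's. The paper proceeds by brute force: it feeds the normalizer condition a single well-chosen test element $A$ with $A_{0}$ diagonal with distinct eigenvalues and $A_{1}$ a nonsingular diagonal matrix, and then compares the blocks of $YA=BY$ along anti-diagonals starting from the lower-left corner, proving by an interleaved induction that the blocks below the diagonal vanish and the blocks on the next anti-diagonal are diagonal. Your route --- $\Ad(X)$ is an automorphism of the associative algebra $\mathcal{A}\simeq R[w]/(w^{n})$ spanned by $J_{r}(n)$, hence preserves the powers $w^{k}\mathcal{A}$ of its Jacobson radical; then Skolem--Noether on the degree-zero quotient produces $h_{0}$ and a centralizer argument handles the diagonal blocks --- is structurally cleaner and would in fact give the stronger conclusion that the $Y_{i,i}$ are \emph{scalar}, which the paper only reaches in Lemma \ref{lem:weyl-9}. (Scalar implies diagonal, so there is no typo in the statement to worry about.)

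There is, however, a concrete gap at the crucial step (ii). The claim that the single test element $A=1_{r}+1_{r}\otimes\La$ "forces the subdiagonal blocks of $X$ to vanish," by "the same type of computation as in Propositions \ref{prop:weyl-2} and \ref{prop:weyl-3}," is wrong on both counts. Those propositions kill off-diagonal blocks by pitting blocks with \emph{disjoint} spectra against each other; inside a single $J_{r}(n)$ every diagonal block of $A$ has the same spectrum, so that mechanism is unavailable. And the single element genuinely does not suffice when $r\geq2$: for $r=n=2$ take $u=\begin{pmatrix}0&1\\0&0\end{pmatrix}$, $v=\begin{pmatrix}1&0\\0&-1\end{pmatrix}$ and $B=1_{4}+\begin{pmatrix}u&v\\0&u\end{pmatrix}\in J_{2}(2)$; then $B$ is conjugate to $A=1_{4}+1_{2}\otimes\La$ but $\ker(B-1)\neq\ker(A-1)$, so any $X$ with $XAX^{-1}=B$ fails to preserve the flag and is not block upper triangular. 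What does work is your filtration idea from step (i), pushed one step further than you take it: since $\Ad(X)$ preserves $w\mathcal{A}$, the element $X(1_{r}\otimes\La)X^{-1}$ equals $\sum_{j\geq1}c_{j}\otimes\La^{j}$ with $c_{1}$ invertible (by a rank count), so the kernels of its powers are exactly the standard flag $V_{k}=\ker(1_{r}\otimes\La)^{k}$; since conjugation carries $\ker(1_{r}\otimes\La)^{k}$ onto these kernels, $XV_{k}=V_{k}$ and $X$ is block upper triangular. You must supply this bridge from "the subspaces $w^{k}\mathcal{A}$ of the algebra are stable" to "the flag in $\C^{nr}$ is stable"; without it the upper-triangularity on which your step (iii) rests is not established. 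Once it is in place, step (iii) goes through as you describe: for $D=a\otimes\La^{0}$ the diagonal blocks of $YDY^{-1}$ are $Y_{i,i}aY_{i,i}^{-1}$ and must all coincide, so $Y_{0,0}^{-1}Y_{i,i}$ is central, and the Skolem--Noether normalization makes $Y_{0,0}$, hence every $Y_{i,i}$, scalar.
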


\begin{proof}
Since $X$ is an element of $N_{\GL{nr}}(J_{r}(n))$, for any $A\in J_{r}(n)$,
$B:=XAX^{-1}$ is an element of $J_{r}(n)$ by definition. Put $B=\sum_{0\leq k<n}B_{k}\otimes\La^{k}$
with $B_{k}\in\mat(r)$. We take in particular $A=\sum_{0\leq k<n}A_{k}\otimes\La^{k}$
such that $A_{0}$ has distinct eigenvalues $a_{1},\dots,a_{r}$.
We assert that $A_{0}$ and $B_{0}$ are similar. In fact the characteristic
polynomials for $A$ and $B$ are $\det(xI-A_{0})^{n}$ and $\det(xI-B_{0})^{n}$,
respectively, and they coincide. It follows that $A_{0}$ and $B_{0}$
share the same eigenvalues. This means that $A_{0}$ and $B_{0}$
are similar. By this assertion we can take $h_{0}\in\GL r$ such that
$B_{0}=h_{0}A_{0}h_{0}^{-1}$. So define $h:=\diag(h_{0},\dots,h_{0})\in J_{r}(n).$Then
$B$ can be expressed as 
\[
B=\left(\begin{array}{cccc}
h_{0}A_{0}h_{0}^{-1} & B_{1} & \dots & B_{n-1}\\
 & \ddots & \ddots & \vdots\\
 &  & \ddots & B_{1}\\
 &  &  & h_{0}A_{0}h_{0}^{-1}
\end{array}\right)=h\left(\begin{array}{cccc}
A_{0} & B_{1}' & \dots & B_{n-1}'\\
 & \ddots & \ddots & \vdots\\
 &  & \ddots & B_{1}'\\
 &  &  & A_{0}
\end{array}\right)h^{-1},
\]
where $B_{k}'=h_{0}^{-1}B_{k}h_{0}$. So the relation $XAX^{-1}=B$
is written as 
\[
(h^{-1}X)A(h^{-1}X)^{-1}=A_{0}\otimes\La^{0}+\sum_{1\leq k<n}B_{k}'\otimes\La^{k}.
\]
 If we put $Y:=h^{-1}X$, which belongs to $N_{\GL{nr}}(J_{r}(n))$
because of $h\in J_{r}(n)$, it satisfies
\begin{equation}
YAY^{-1}=A_{0}\otimes\La^{0}+\sum_{1\leq k<n}B_{k}'\otimes\La^{k}.\label{eq:weyl-10}
\end{equation}
Next we shall show that $Y=(Y_{i,j})_{0\leq i,j<n},Y_{i,j}\in\mat(r)$,
is upper triangular blockwise, namely $Y_{i,j}=0$ for $i>j$, and
that the diagonal blocks $Y_{i,i}$ are diagonal matrices of size
$r$. Write (\ref{eq:weyl-10}) as 
\begin{equation}
YA=BY.\label{eq:weyl-11}
\end{equation}
We take $A=\sum_{0\leq k<n}A_{k}\otimes\La^{k}\in J_{r}(n)$ such
that $A_{0}$ has $r$ distinct eigenvalues and that $A_{1}$ is a
nonsingular diagonal matrix. Writing the $(i,j)$-block of both sides
of (\ref{eq:weyl-11}), we have 
\begin{align*}
(YA)_{i,j} & =Y_{i,0}A_{j}+Y_{i,1}A_{j-1}+\cdots+Y_{i,j}A_{0},\\
(BY)_{i,j} & =A_{0}Y_{i,j}+B_{1}'Y_{i+1,j}+\cdots+B_{n-1-i}'Y_{n-1,j}.
\end{align*}
We compare the blocks of the both sides of (\ref{eq:weyl-11}) in
the order indicated as 
\[
\begin{pmatrix}\\
\nwarrow\\
\nwarrow & \nwarrow\\
\nwarrow & \nwarrow & \nwarrow & \quad
\end{pmatrix},
\]
namely, first we compare the block located at the left lowest corner,
then we move to one upper diagonal array and we compare the lowest
block in it, then move to above in this array, and so on. Namely,
first we compare the $(i,j)$-block satisfying $i-j=n-1$. The possible
choice of $(i,j)$ is $(n-1,0)$. So we compare the $(n-1,0)$-block
of (\ref{eq:weyl-11}). Next we consider the $(i,j)$-blocks satisfying
$i-j=n-2$. The possible choice of $(i,j)$ is $(n-1,1)$ and $(n-2,0)$.
In this case, we compare firstly the $(n-1,1)$-block and then the
$(n-2,0)$-block of (\ref{eq:weyl-11}), and so on. Comparing the
$(n-1,0)$-block, we have
\begin{equation}
Y_{n-1,0}A_{0}=Y_{n-1,0}A_{0}.\label{eq:weyl-12}
\end{equation}
Since $A_{0}$ is a diagonal matrix with $r$ distinct eigenvalues,
it follows from (\ref{eq:weyl-12}) that $Y_{n-1,0}$ is also a diagonal
matrix. Next we consider the case $i-j=n-2$. Comparing the $(n-1,1)$-block,
we have 
\begin{equation}
Y_{n-1,0}A_{1}+Y_{n-1,1}A_{0}=A_{0}Y_{n-1,1}.\label{eq:weyl-13}
\end{equation}
Since $A_{1}$ and $Y_{n-1,0}$ are diagonal, so is $Y_{n-1,0}A_{1}$.
Comparing the off-diagonal entries of both sides and using the fact
that the eigenvalues of $A_{0}$ are all distinct, we see that $Y_{n-1,1}$
is also a diagonal matrix. It follows from (\ref{eq:weyl-13}) that
$Y_{n-1,0}A_{1}=0$ and $Y_{n-1,0}=0$ since $A_{1}$ is nonsingular.
Next we look at the $(n-2,0)$-block of (\ref{eq:weyl-11}):
\[
Y_{n-2,0}A_{0}=A_{0}Y_{n-2,0}+B_{1}'Y_{n-1,0}.
\]
Since $Y_{n-1,0}=0$ and $A_{0}$ has $r$ distinct eigenvalues, it
follows from the above relation that $Y_{n-2,0}$ is a diagonal matrix.
We turn to the $(i,j)$-block satisfying $i-j=n-3$. The possible
pair $(i,j)$ is $(n-1,2),(n-2,1)$ and $(n-3,0)$, and we apply the
similar reasoning for these cases in this order. From the $(n-1,2)$-block
of (\ref{eq:weyl-11}), we have 
\begin{equation}
Y_{n-1,0}A_{2}+Y_{n-1,1}A_{1}+Y_{n-1,2}A_{0}=A_{0}Y_{n-1,2}.\label{eq:weyl-14}
\end{equation}
Taking into account that $Y_{n-1,0}=0$ and $Y_{n-1,1},A_{1}$ are
diagonal, we compare the off-diagonal entries of both sides and we
see that $Y_{n-1,2}$ is a diagonal matrix. Then (\ref{eq:weyl-14})
reduces to $Y_{n-1,1}A_{1}=0$ and this leads to $Y_{n-1,1}=0$. For
the case $(i,j)=(n-2,1)$, relation (\ref{eq:weyl-11}) gives
\begin{equation}
Y_{n-2,0}A_{1}+Y_{n-2,1}A_{0}=A_{0}Y_{n-2,1}+B_{1}'Y_{n-1,1}.\label{eq:weyl-15}
\end{equation}
Since $Y_{n-1,1}=0$ and $Y_{n-2,0},A_{1}$ are diagonal, comparing
the off-diagonal entries of the both sides of (\ref{eq:weyl-15}),
we see that $Y_{n-2,1}$ is diagonal. Then (\ref{eq:weyl-15}) reduces
to $Y_{n-2,0}A_{1}=0$ and it leads to $Y_{n-2,0}=0$. For the case
$(i,j)=(n-3,0)$, relation (\ref{eq:weyl-11}) gives
\begin{equation}
Y_{n-3,0}A_{0}=A_{0}Y_{n-3,0}+B_{1}'Y_{n-2,0}+B_{2}'Y_{n-1,0}.\label{eq:weyl-16}
\end{equation}
Since $Y_{n-2,0}=Y_{n-1,0}=0$, (\ref{eq:weyl-16}) gives $Y_{n-3,0}A_{0}=A_{0}Y_{n-3,0}$,
from which we see that $Y_{n-3,0}$ is diagonal. Now we proceed inductively.
Let $k$ be an integer such that $0\leq k\leq n$ and suppose that
$Y_{i,j}=0$ for any $(i,j)$ satisfying $i-j>n-(k-1)$ and that $Y_{i,j}$
is a diagonal matrix for any $(i,j)$ satisfying $i-j=n-(k-1)$ as
the assumption of induction. We show that this assertion is valid
when $k$ is replaced by $k+1$. For $(i,j)$ satisfying $i-j=n-k$,
consider the condition (\ref{eq:weyl-11}):
\begin{equation}
(YA)_{i,j}=(BY)_{i,j}.\label{eq:weyl-18}
\end{equation}
In case $(i,j)=(n-1,k-1)$, (\ref{eq:weyl-18}) has the form
\[
Y_{n-1,0}A_{k-1}+Y_{n-1,1}A_{k-2}+\cdots+Y_{n-1,k-1}A_{0}=A_{0}Y_{n-1,k-1}.
\]
Noting $Y_{n-1,0}=Y_{n-1,1}=\cdots=Y_{n-1,k-3}=0$ holds by the assumption
of induction, it reduces to 
\begin{equation}
Y_{n-1,k-2}A_{1}+Y_{n-1,k-1}A_{0}=A_{0}Y_{n-1,k-1}.\label{eq:weyl-17}
\end{equation}
Moreover $Y_{n-1,k-2}$ is diagonal by the induction assumption, so
$Y_{n-1,k-2}A_{1}$ is a diagonal matrix. By comparing the off-diagonal
entries of (\ref{eq:weyl-17}), we see that $Y_{n-1,k-1}$ is diagonal
and then $Y_{n-1,k-2}=0$. Next we consider the case $(i,j)=(n-2,k-2)$.
In this case the condition (\ref{eq:weyl-18}) has the form
\begin{equation}
Y_{n-2,0}A_{k-2}+Y_{n-2,1}A_{k-3}+\cdots+Y_{n-2,k-3}A_{1}+Y_{n-2,k-2}A_{0}=A_{0}Y_{n-2,k-2}+B_{1}'Y_{n-1,k-2}.\label{eq:weyl-19}
\end{equation}
By the induction assumption, we have $Y_{n-2,0}=Y_{n-2,1}=\cdots=Y_{n-2,k-4}=0$.
Combining them with $Y_{n-1,k-2}=0$ which is verified above, we see
that (\ref{eq:weyl-19}) becomes
\[
Y_{n-2,k-3}A_{1}+Y_{n-2,k-2}A_{0}=A_{0}Y_{n-2,k-2}.
\]
Taking into account that $Y_{n-2,k-3}$ is diagonal, comparing the
off-diagonal entries of both sides, we see that $Y_{n-2,k-2}$ is
diagonal and as a consequence $Y_{n-2,k-3}=0$. The similar process
works for $(i,j)$ successively when $(i,j)$ is taken in the order
\[
(n-3,k-3)\to(n-4,k-4)\to\cdots\to(n-k+1,1),
\]
we see that in the order $Y_{n-3,k-4}\to Y_{n-4,k-5}\to\cdots\to Y_{n-k+1,0}$
these matrices become $0$ and that in the order $Y_{n-3,k-3}\to Y_{n-4,k-4}\to\cdots\to Y_{n-k+1,1}$
these matrices turn out to be diagonal. As the last step, consider
the case $(i,j)=(n-k,0)$. The condition (\ref{eq:weyl-18}) is 
\[
Y_{n-k,0}A_{0}=A_{0}Y_{n-k,0}+B_{1}'Y_{n-k+1,0}+B_{2}'Y_{n-k+2,0}+\cdots+B_{k-1}'Y_{n-1,0}.
\]
Since $Y_{n-k+1,0}=Y_{n-k+2,0}=\cdots=Y_{n-1,0}=0$ is already shown
in the induction process, it reduces to $Y_{n-k,0}A_{0}=A_{0}Y_{n-k,0}$
and this condition shows that $Y_{n-k,0}$ is a diagonal matrix. The
induction process works until $k$ reaches $n$.
\end{proof}
\begin{lem}
\label{lem:weyl-9}For $X\in N_{\GL{nr}}(J_{r}(n))$, there exists
$h\in J_{r}(n)$ such that 
\[
h^{-1}X=\begin{pmatrix}1_{r} & 0 & \dots & 0\\
 & c_{1,1}1_{r} & \dots & c_{1,n-1}1_{r}\\
 &  & \ddots & \vdots\\
 &  &  & c_{n-1,n-1}1_{r}
\end{pmatrix}\in N_{\GL{nr}}(J_{r}(n))
\]
 with some $c_{i,j}\in\C$.
\end{lem}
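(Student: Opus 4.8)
The plan is to take the matrix $Y:=h^{-1}X$ produced by Lemma~\ref{lem:weyl-8} — block upper triangular, $Y=(Y_{i,j})_{0\le i,j<n}$, with diagonal blocks $D_i:=Y_{i,i}$ diagonal and invertible (as $Y\in\GL{nr}$) — and to bring it into the asserted form by three successive reductions, at each stage left-multiplying $Y$ by the inverse of an element of $J_r(n)$; the resulting matrix then still equals $(h'')^{-1}X$ for some $h''\in J_r(n)$, hence stays in $N_{\GL{nr}}(J_r(n))$. First I make the diagonal blocks scalar. For the constant element $A=\diag(A_0,\dots,A_0)=A_0\otimes\La^0\in J_r(n)$ with $A_0\in\GL r$ arbitrary, $B:=YAY^{-1}$ belongs to $J_r(n)$, so it is block upper triangular and block Toeplitz, and in particular all its diagonal blocks coincide; since $Y$ is block upper triangular, $(YAY^{-1})_{i,i}=D_iA_0D_i^{-1}$, whence $D_iA_0D_i^{-1}=D_0A_0D_0^{-1}$ for every $i$ and every $A_0\in\GL r$. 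As $\GL r$ spans $\mat(r)$, $D_0^{-1}D_i$ is central, so $D_i=d_iD_0$ with $d_i\in\C^{\times}$ and $d_0=1$; left-multiplying $Y$ by $\diag(D_0,\dots,D_0)^{-1}\in J_r(n)$ replaces $D_i$ by $d_i1_r$ and makes the $(0,0)$-block $1_r$.

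Next I annihilate the first block row. I look for $h'=1+\sum_{1\le k<n}h'_k\otimes\La^k\in\jro(n)$ with $(h'Y)_{0,j}=0$ for $1\le j<n$. Since $Y$ is block upper triangular, $(h'Y)_{0,j}=Y_{0,j}+\sum_{1\le k\le j}h'_kY_{k,j}$, and because the coefficients $Y_{k,k}=d_k1_r$ are invertible this is a triangular linear system determining $h'_1,\dots,h'_{n-1}$ recursively in $j$. After replacing $Y$ by $h'Y$ — still block upper triangular, still with $(0,0)$-block $1_r$ and diagonal blocks $d_i1_r$ — we have $Y_{0,j}=0$ for all $1\le j<n$, i.e. the first block row of $Y$ is $(1_r,0,\dots,0)$.

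The remaining step, which is the main point, is to show that every off-diagonal block $Y_{i,j}$ ($i<j$) is now scalar; I prove this by induction on $p=j-i$, using only the constant test elements $A=\diag(A_0,\dots,A_0)$, $A_0\in\GL r$. Assuming $Y_{i,j}$ scalar for all $0<j-i<p$, one checks that $Y^{-1}$ is block upper triangular with scalar blocks on super-diagonals $1,\dots,p-1$ and that $(Y^{-1})_{i,i+p}=-d_i^{-1}d_{i+p}^{-1}Y_{i,i+p}+(\text{scalar})\,1_r$. Expanding $(YAY^{-1})_{i,i+p}=\sum_{i\le k\le i+p}Y_{i,k}A_0(Y^{-1})_{k,i+p}$ and using the induction hypothesis to see that the terms with $i<k<i+p$ contribute scalar multiples of $A_0$, one obtains $(YAY^{-1})_{i,i+p}=d_{i+p}^{-1}[Y_{i,i+p},A_0]+s_iA_0$ with $s_i\in\C$. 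Since $B=YAY^{-1}$ is block Toeplitz this block is independent of $i$; comparing with $i=0$ and using $Y_{0,p}=0$ gives $d_{i+p}^{-1}[Y_{i,i+p},A_0]=(s_0-s_i)A_0$ for all $A_0\in\GL r$, and the choice $A_0=1_r$ forces $s_i=s_0$, hence $[Y_{i,i+p},A_0]=0$ for every $A_0\in\GL r$ and therefore (as $\GL r$ spans $\mat(r)$) $Y_{i,i+p}$ is scalar. Writing $Y_{i,j}=c_{i,j}1_r$ with $c_{i,i}=d_i$ and $c_{i,j}=0$ for $i>j$ puts $h^{-1}X$ into exactly the stated form. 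The real obstacle is the bookkeeping in this induction: one must organise the expansion of $(YAY^{-1})_{i,i+p}$ so that the commutator $d_{i+p}^{-1}[Y_{i,i+p},A_0]$ is visibly the only term not of the shape $(\text{scalar})\cdot A_0$, which rests on the already-established scalarness of the lower super-diagonals of both $Y$ and $Y^{-1}$ — the analogue here of the ordered block comparison that drives the proof of Lemma~\ref{lem:weyl-8}.
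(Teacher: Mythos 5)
Your proposal is correct, and the two reduction steps (normalizing the $(0,0)$-block to $1_r$ via a constant element of $J_r(n)$, then killing the rest of the $0$-th block row by a recursively determined element of $\jro(n)$) are exactly what the paper does. Where you diverge is in the final, decisive step. The paper's argument there is shorter: for a constant test element $A=\diag(A_0,\dots,A_0)$ it compares only the $0$-th block row of $YA=BY$, which forces $B_0=A_0$ and then, using the nonsingularity of the diagonal blocks, $B_1=\cdots=B_{n-1}=0$; hence $B=A$, so $Y$ commutes with every $\diag(A_0,\dots,A_0)$ and \emph{all} blocks $Y_{i,j}$ (diagonal ones included) are scalar in one stroke, with no induction and no computation of $Y^{-1}$. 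Your route instead first makes the diagonal blocks scalar (correctly, via $D_iA_0D_i^{-1}=D_0A_0D_0^{-1}$ and the fact that $\GL r$ spans $\mat(r)$) and then runs an induction over super-diagonals, exploiting the block-Toeplitz structure of $B=YAY^{-1}$ to isolate the commutator $d_{i+p}^{-1}[Y_{i,i+p},A_0]$ and compare against the $i=0$ block where $Y_{0,p}=0$; the specialization $A_0=1_r$ then kills the scalar discrepancy $s_0-s_i$ and forces $[Y_{i,i+p},A_0]=0$ for all $A_0$. Every step of this checks out (in particular the claim that the super-diagonal blocks of $Y^{-1}$ up to order $p-1$ are scalar and $(Y^{-1})_{i,i+p}=-d_i^{-1}d_{i+p}^{-1}Y_{i,i+p}+(\text{scalar})1_r$), but the bookkeeping you correctly identify as the main burden is precisely what the paper's observation that $B$ must equal $A$ lets one avoid; it is worth internalizing that trick, since it converts "normalizes $J_r(n)$ and fixes the first block row" directly into "centralizes the constant subgroup."
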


\begin{proof}
By Lemma \ref{lem:weyl-8}, we may assume $X=(X_{i,j})_{0\leq i,j<n}$
is blockwise upper triangular and the diagonal blocks $X_{0,0},X_{1,1},\dots,X_{n-1,n-1}$
are all diagonal matrices. Put $h'=(X_{0,0},\dots,X_{0,0})\in J_{r}(n)$
and $X'=(h')^{-1}X$. Then $X'$ has the form
\[
X'=\left(\begin{array}{cccc}
1_{r} & X_{0,1}' & \dots & X_{0,n-1}'\\
 & X_{1,1}' &  & X_{1,n-1}'\\
 &  & \ddots & \vdots\\
 &  &  & X_{n-1,n-1}'
\end{array}\right),\quad X_{i,j}'=X_{0,0}^{-1}X_{i,j}.
\]
Next we choose $(h'')^{-1}=1_{r}\otimes\La^{0}+\sum_{k=1}^{n-1}h_{k}\otimes\La^{k}$
appropriately so that the blocks in the $0$-th row of $Y=(h'')^{-1}X'$
is $(1_{r},0,\dots,0)$. The existence of such $h''$ is shown as
follows. Note that the $(0,j)$-block of $Y$ is 
\[
((h'')^{-1}X')_{0,j}=X_{0,j}'+h_{1}X_{1,j}'+\cdots+h_{j}X_{j,j}'
\]
and $X_{j,j}'$ is a nonsingular diagonal matrix. Then we can determine
$h_{1}$ by the condition $((h'')^{-1}X')_{0,1}=X_{0,1}'+h_{1}X_{1,1}'=0$
since $X_{1,1}'$ is nonsingular. Next we can determine $h_{2}$ by
the condition $((h'')^{-1}X')_{0,2}=X_{0,2}'+h_{1}X_{1,2}'+h_{2}X_{2,2}'=0$
since $X_{2,2}'$ is a nonsingular diagonal matrix. Inductively after
determining $h_{1},\dots,h_{j-1}$, we can determine $h_{j}$ by the
condition $((h'')^{-1}X')_{0,j}=0$. Now $Y\in N_{\GL{nr}}(J_{r}(n))$
is of the form
\[
Y=\left(\begin{array}{cccc}
1_{r} & 0 & \dots & 0\\
 & Y_{1,1} & \dots & Y_{1,n-1}\\
 &  & \ddots & \vdots\\
 &  &  & Y_{n-1,n-1}
\end{array}\right),\quad Y_{i,j}\in\mat(r),
\]
where the diagonal blocks $Y_{i,i}$ are diagonal matrices. We assert
that $Y_{i,j}$ are all scalar matrices. To show this fact, we use
the condition for $Y\in N_{\GL{nr}}(J_{r}(n))$, which implies that
for any $A=\sum_{0\leq k<n}A_{k}\otimes\La^{k}\in J_{r}(n)$, there
exists $B=\sum_{0\leq k<n}B_{k}\otimes\La^{k}\in J_{r}(n)$ such that
\begin{equation}
YA=BY.\label{eq:weyl-20}
\end{equation}
As the first step, we show that if $A=A_{0}\otimes\La^{0}=\diag(A_{0},\dots,A_{0})$,
then $A=B$. We compare the $(0,j)$-block of the both sides of (\ref{eq:weyl-20}),
namely $(YA)_{0,j}=(BY)_{0,j}$. Since 
\begin{align*}
(YA)_{0,j} & =\begin{cases}
A_{0} & (j=0),\\
0 & (j\geq1),
\end{cases}\\
(BY)_{0,j} & =\begin{cases}
B_{0} & (j=0),\\
B_{1}Y_{1,j}+\cdots+B_{j}Y_{j,j} & (j\geq1),
\end{cases}
\end{align*}
considering the case $j=0$, we have $A_{0}=B_{0}$. In the case $j=1$,
the condition is written as $0=B_{1}Y_{1,1}$, from which we see $B_{1}=0$
because $Y_{1,1}$ is nonsingular. Inductively we can conclude $B_{j}=0$
for $j=2,\dots n-1$ using the fact that $Y_{j,j}$ is a nonsingular
matrix and we have $A=B=\diag(A_{0},\dots,A_{0})$ for which (\ref{eq:weyl-20})
holds. This means
\[
Y_{i,j}A_{0}=A_{0}Y_{i,j}\qquad(i,j=1,\dots,n-1)
\]
holds for any $A_{0}\in\GL r$. It follows that $Y_{i,j}$ must be
a scalar matrix for any $1\leq i,j<n$. This proves the lemma.
\end{proof}
\begin{lem}
\label{lem:weyl-10}Let $Y\in N_{\GL{nr}}(J_{r}(n))$ be written as
$Y=(Y_{i,j}),\;Y_{i,j}\in\mat(r)$, in the form of block matrix and
every blocks $Y_{i,j}$ are scalar matrices. If the blocks in the
$0$-th row are $(Y_{0,0},Y_{0,1},\dots,Y_{0,n-1})=(1_{r},0,\dots,0)$.
Then there exists $(c_{1},\dots,c_{n-1})\in\C^{n-1},\;c_{1}\neq0$
such that $Y=(\mu_{i,j}(c)1_{r})_{0\leq i,j<n}$.
\end{lem}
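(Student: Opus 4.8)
The plan is to reduce everything to a purely $n\times n$ problem and recognize $Y$ inside $W_{1}(n)$. Since every block $Y_{i,j}$ is scalar, write $Y_{i,j}=y_{i,j}1_{r}$ and put $\hat Y=(y_{i,j})_{0\le i,j<n}$; in the notation of Section~\ref{subsec:Jordan-group} this says $Y=1_{r}\otimes\hat Y$, and $\hat Y\in\GL n$ since $\det Y=(\det\hat Y)^{r}$. The hypothesis says the $0$-th row of $\hat Y$ is $(1,0,\dots,0)$. Because scalar blocks commute with everything, $(1_{r}\otimes\hat Y)(1_{r}\otimes\hat A)(1_{r}\otimes\hat Y)^{-1}=1_{r}\otimes(\hat Y\hat A\hat Y^{-1})$ for any $\hat A\in\mat(n)$, and $1_{r}\otimes C$ lies in $J_{r}(n)$ exactly when $C$ belongs to the commutative algebra $\mathcal{T}_{n}:=\C[\La]=\Span\{1_{n},\La,\dots,\La^{n-1}\}$ of upper triangular Toeplitz matrices (and is then invertible iff its constant term is nonzero). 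Applying the condition $Y\in N_{\GL{nr}}(J_{r}(n))$ to $A=1_{r}\otimes(1_{n}+\La)\in J_{r}(n)$ therefore forces $\hat Y\La\hat Y^{-1}\in\mathcal{T}_{n}$.

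Next I would pin $\hat Y$ down explicitly. As $\hat Y\La\hat Y^{-1}$ is conjugate to the regular nilpotent $\La$, it is a nilpotent element of $\mathcal{T}_{n}$ of rank $n-1$; writing $\hat Y\La\hat Y^{-1}=\sum_{k\ge1}c_{k}\La^{k}=M(c,\La)$ with $c=(c_{1},\dots,c_{n-1})$, the rank forces $c_{1}\neq0$ (an element of $\mathcal{T}_{n}$ of $\La$-adic valuation $k_{0}$ has rank $n-k_{0}$). Raising to the $i$-th power and using $\La^{n}=0$ gives, straight from the definition of the $\mu_{i,j}$,
\[
\hat Y\La^{i}\hat Y^{-1}=M(c,\La)^{i}=\sum_{j}\mu_{i,j}(c)\La^{j}\qquad(0\le i<n),
\]
that is, $\hat Y\La^{i}=\bigl(\sum_{j}\mu_{i,j}(c)\La^{j}\bigr)\hat Y$. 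Reading off the $0$-th row of both sides, and using that the $0$-th row of $\hat Y$ is $e_{0}^{\mathrm{t}}$ while $e_{0}^{\mathrm{t}}\La^{i}=e_{i}^{\mathrm{t}}$ for the standard basis row vectors, the left side contributes $e_{i}^{\mathrm{t}}$ and the right side contributes $\sum_{j}\mu_{i,j}(c)\cdot(\text{the $j$-th row of }\hat Y)$. Stacking these identities over $i=0,\dots,n-1$ yields the matrix equation $\bigl(\mu_{i,j}(c)\bigr)_{0\le i,j<n}\,\hat Y=1_{n}$, i.e. $\hat Y=\mu(c)^{-1}$ in the notation of (\ref{eq:weyl-5}) with $r=1$.

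Finally, since $W_{1}(n)=\{\mu(c):c\in\C^{n-1},\ c_{1}\neq0\}$ is a group (see the lemma preceding Theorem~\ref{thm:weyl-main-1}, or combine this with Lemma~\ref{lem:weyl-6}(3)), the inverse $\mu(c)^{-1}$ is again of the form $\mu(c')$ for some $c'=(c'_{1},\dots,c'_{n-1})$ with $c'_{1}\neq0$; hence $\hat Y=\mu(c')$, i.e. $y_{i,j}=\mu_{i,j}(c')$, so $Y=1_{r}\otimes\hat Y=\bigl(\mu_{i,j}(c')1_{r}\bigr)_{0\le i,j<n}$, which is the assertion. (Block upper triangularity of $Y$, not assumed in the statement, comes out for free, since $\mu(c')$ is upper triangular.) The only genuine obstacle is bookkeeping: fixing the tensor/index conventions so that ``all blocks scalar'' really is $Y=1_{r}\otimes\hat Y$ and $J_{r}(n)\cap(1_{r}\otimes\mat(n))=1_{r}\otimes\mathcal{T}_{n}^{\times}$, and checking that the ``$0$-th row'' step produces $\mu(c)$ itself rather than its transpose or inverse; the conceptual point---that the normalizer condition forces $\hat Y$ to realize the substitution automorphism $\La\mapsto M(c,\La)$ of $\C[\La]$---is short.
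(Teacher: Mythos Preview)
Your argument is correct. Both you and the paper make the same first move: since every block is scalar, write $Y=1_{r}\otimes\hat Y$ and reduce to the $r=1$ problem of showing that $\hat Y\in N_{\GL n}(J_{1}(n))$ with $0$-th row $(1,0,\dots,0)$ must equal $\mu(c')$ for some $c'$ with $c'_{1}\neq0$. At that point the paper simply invokes Proposition~4.4 of \cite{Kimura-Koitabashi} and stops, whereas you supply a self-contained proof of that $r=1$ statement. Your argument---conjugating the single element $\La$ to identify $\hat Y\La\hat Y^{-1}=M(c,\La)\in\C[\La]$, then reading off the $0$-th row of $\hat Y\La^{i}=M(c,\La)^{i}\hat Y$ using that $e_{0}^{\mathrm t}$ is a cyclic vector for right multiplication by $\La$---is clean and short, and has the advantage of making explicit the point noted in the remark after Theorem~\ref{thm:weyl-main-1}, namely that $\hat Y$ is exactly the matrix of the substitution automorphism $\La\mapsto M(c,\La)$ of $\C[\La]/(\La^{n})$ in the basis $1,\La,\dots,\La^{n-1}$. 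The only cost is the extra step of inverting: you first obtain $\hat Y=\mu(c)^{-1}$ and then use that $W_{1}(n)$ is a group to rewrite this as $\mu(c')$; the cited external result presumably lands on $\mu(c')$ directly.
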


\begin{proof}
By virtue of Lemma \ref{lem:weyl-9}, $Y$ can be written as $Y=1_{r}\otimes y$
with $y=(y_{i,j})_{0\leq i,j<n}\in N_{\GL n}(J_{1}(n))$ with $(y_{0,0},y_{0,1},\dots,y_{0,n-1})=(1,0,\dots,0)$.
Here we understand $1_{r}\otimes y$ is the block matrix whose $(i,j)$-block
is $y_{i,j}1_{r}$. Then applying Proposition 4.4 of \cite{Kimura-Koitabashi},
we get the conclusion of the lemma.
\end{proof}
Thus we have completed the proof of Proposition \ref{prop:weyl-7}
and hence the proof of Theorem \ref{thm:weyl-main-1}.
\begin{rem}
We can see that the Weyl group $W_{r}(n)$ for the Jordan group $J_{r}(n)$
is isomorphic to the automorphism group $\auto(S)$ of the algebra
$S=\C[T]/(T^{n})$. In fact, for a given $f\in\mathrm{Aut}(S)$, a
generator $T$ is taken to an another generator $T'=f(T)$ of $S$,
which is written as $T'=c_{1}T+\cdots+c_{n-1}T^{n-1}\ (=M(c,T))$
for some $c_{1},\dots,c_{n-1}\in\C$. Since $T'$ is a generator,
we must have $c_{1}\neq0$ and since $f$ is an algebra homomorphism,
$f$ induces the correspondence $T^{i}\mapsto M(c,T)^{i}$. 
\end{rem}

\section{Action of Weyl group on Radon HGF}

We study in this section the action of the normalizer $N_{G}(H_{\lm})$
on the Radon HGF of type $\lm.$ We adopt the notations in Section
\ref{sec:Weyl-group} for the partition $\lm$ of $n$ and for the
related subgroups of $G=\GL N$. Let $Z$ be the subset of $\mat'(m,N)$
defined by (\ref{eq:radon-00}) with respect to the group $H_{\lm}$
which is Zariski open in $\mat'(m,N)$, and let $Aut(Z)$ be the group
of holomorphic automorphisms of $Z$. By virtue of the explicit form
of $N_{G}(H_{\lm})$ given in Theorem \ref{thm:weyl-main-1}, it is
easily seen that the following lemma holds.
\begin{lem}
\label{lem:act-1}For \textup{$g\in N_{G}(H_{\lm})$ and }$z\in Z$\textup{,}
we have $zg\in Z$. Define for any $g\in N_{G}(H_{\lm})$ the map
$\f(g):Z\to Z$ by 
\[
\f(g)(z)=zg,\quad z\in Z.
\]
 Then we have the anti-homomorphism 
\[
\f:N_{G}(H_{\lm})\to Aut(Z).
\]
 In particular we have a representation of $W_{\lm}\subset N_{G}(H_{\lm})$
in the group $Aut(Z).$ 
\end{lem}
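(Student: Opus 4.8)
The plan is to verify the two assertions separately: first that right-multiplication by an element of $N_G(H_\lm)$ preserves the Zariski open set $Z$, and then that $\f$ is an anti-homomorphism into $\auto(Z)$. For the first point, I would use the explicit description of $N_G(H_\lm)$ from Theorem \ref{thm:weyl-main-1}, namely $N_G(H_\lm)=H_\lm\rtimes\bigl(\prod_{i}W_r(n_i)^{p_i}\rtimes\cP_i\bigr)$. It suffices to check that $zg\in Z$ for $g$ ranging over each of the three types of generators: (a) an element $h\in H_\lm$, (b) an element $\mu(c)\in W_r(n_i)$ embedded blockwise, and (c) a block-permutation matrix $P_\sm\in\cP_i$. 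For type (a) the claim is essentially Proposition \ref{prop:covariance}(2): right multiplication by $h\in H_\lm$ keeps $z$ in $\mat'(m,N)$ and, since $h$ acts on the $j$-th Jordan block by an element of $J_r(n_j)$ whose leading block $h_0^{(j)}$ is invertible, the condition $\rank z_0^{(k)}=r$ is preserved. For type (b), one computes directly that $z\mapsto z\,\mu(c)$ sends $z_0^{(k)}\mapsto \mu_{0,0}(c)z_0^{(k)}=z_0^{(k)}$ (since $\mu_{0,0}=1$) within the $i$-th cluster of blocks and acts trivially on the other clusters, so again the rank condition on the leading blocks is untouched; one only needs $\mu(c)\in\GL{n_i r}$, which holds because $c_1\ne 0$. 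For type (c), right multiplication by $P_\sm$ merely permutes the $p_i$ groups of Jordan blocks of equal size within the $i$-th cluster, which obviously preserves both $\rank z=m$ and the conditions $\rank z_0^{(k)}=r$. Since $Z$ is preserved by a generating set and is closed under the relevant products, $zg\in Z$ for all $g\in N_G(H_\lm)$.

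Next I would show $\f(g)$ is a holomorphic automorphism of $Z$. The map $z\mapsto zg$ is the restriction of a linear isomorphism of $\mat(m,N)$, hence holomorphic; its inverse is $z\mapsto zg^{-1}$, and $g^{-1}\in N_G(H_\lm)$ since $N_G(H_\lm)$ is a group, so by the previous paragraph $\f(g^{-1})$ also maps $Z$ into $Z$. Thus $\f(g)$ is a biholomorphism of $Z$, i.e. $\f(g)\in\auto(Z)$.

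Finally, the anti-homomorphism property: for $g_1,g_2\in N_G(H_\lm)$ and $z\in Z$,
\[
\f(g_1 g_2)(z)=z(g_1 g_2)=(zg_1)g_2=\f(g_2)\bigl(\f(g_1)(z)\bigr)=\bigl(\f(g_2)\circ\f(g_1)\bigr)(z),
\]
so $\f(g_1g_2)=\f(g_2)\circ\f(g_1)$, which is exactly the anti-homomorphism relation. Restricting $\f$ to $N_G(H_\lm)$ and composing with the quotient description, the induced action descends to $W_\lm=N_G(H_\lm)/H_\lm$ only after one accounts for the $H_\lm$-part via Proposition \ref{prop:covariance}(2) — strictly speaking $\f$ itself is defined on all of $N_G(H_\lm)$, and the statement records that $W_\lm\subset N_G(H_\lm)$ (via a choice of representatives, or via the semidirect factor) therefore acts on $Z$ through $\f$. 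I expect the only mildly delicate step to be the bookkeeping in type (b) and (c): verifying that multiplication by $\mu(c)$ on the right indeed fixes each leading block $z_0^{(k)}$ and does not mix blocks across clusters of different Jordan size — this is immediate from the block-upper-triangular shape of $\mu(c)$ with $\mu_{0,0}(c)=1$ together with the block-diagonal structure of $N_G(H_\lm)$ established in Proposition \ref{prop:weyl-2}. Everything else is formal.
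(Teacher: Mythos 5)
Your proposal is correct and follows exactly the route the paper intends: the paper gives no written proof, stating only that the lemma ``is easily seen'' from the explicit form of $N_{G}(H_{\lm})$ in Theorem \ref{thm:weyl-main-1}, and your case-by-case check on the generators of type $H_{\lm}$, $W_{r}(n_{i})$ and $\cP_{i}$ (noting $\mu_{k,0}(c)=\delta_{k,0}$ so the leading blocks $z_{0}^{(k)}$ are untouched, and that $P_{\sm}$ only permutes blocks of equal Jordan size) together with the formal computation $\f(g_{1}g_{2})=\f(g_{2})\circ\f(g_{1})$ is precisely the omitted verification. No gaps.
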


By Theorem \ref{thm:weyl-main-1}, we see that 
\[
W_{\lm}=U\rtimes\cP,
\]
 where 
\[
U:=\prod_{1\leq i\leq s}W_{r}(n_{i})^{p_{i}},\quad\cP:=\prod_{1\leq i\leq s}\cP_{i}.
\]
 Note that $U$ is the identity component of the Lie group $W_{\lm}$
and $\cP$ is the finite subgroup of $W_{\lm}$ isomorphic to $W_{\lm}/U\simeq\prod_{1\leq i\leq s}\cP_{i}$
with $\cP_{i}\simeq\frak{S}_{p_{i}}.$ As is seen from (\ref{eq:weyl-5})
and Theorem \ref{thm:weyl-main-1}, any element $g\in W_{\lm}$ can
be expressed uniquely in the form $g=(g_{a,b}'1_{r})$ with $g'=(g_{a,b}')_{1\leq a,b\leq n}\in\GL n$.
We define a homomorphism
\begin{equation}
\rho:W_{\lm}\to\GL n,\quad\rho(g)=g'.\label{eq:action-0-1}
\end{equation}

Let $\chi(\cdot\,;\al)$ be a character of the universal covering
group $\tilde{H}_{\lm}$: 
\[
\chi(\cdot\,;\al)=\prod_{1\leq i\leq s}\prod_{1\leq k\leq p_{i}}\chi_{n_{i}}(\cdot\,;\al^{(i,k)}),
\]
where
\begin{equation}
\al=(\al^{(1,1)},\dots,\al^{(1,p_{1})},\dots,\al^{(s,1)},\dots,\al^{(s,p_{s})})\in\C^{n},\quad\al^{(i,k)}=(\al_{0}^{(i,k)},\dots,\al_{n_{i}-1}^{(i,k)})\in\C^{n_{i}}\label{eq:action-0}
\end{equation}
and $\chi_{n_{i}}(\cdot\,;\al^{(i,k)})$ is a character of $\tilde{J}_{r}(n_{i})$
with the parameters $\al^{(i,k)}$. 
\begin{prop}
\label{prop:act-2}For $g\in W_{\lm}$, we have the identity 
\begin{equation}
\chi(\iota^{-1}(\iota(h)g);\al)=\chi(h;\al\cdot\,^{t}\rho(g))\quad\text{for}\ \ h\in\tilde{H}_{\lm},\label{eq:action-5}
\end{equation}
where $\iota$ is the map defined by (\ref{eq:radon-0}) and $\rho$
is that defined by (\ref{eq:action-0-1}). In particular, for $P_{\sm}\in\cP$
corresponding to $\sm\in\prod_{1\leq i\leq s}\S_{p_{i}}$, we have
\[
\chi(\iota^{-1}(\iota(h)P_{\sm});\sm(\al))=\chi(h;\al).
\]
\end{prop}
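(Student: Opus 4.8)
The plan is to reduce (\ref{eq:action-5}) to two elementary cases by means of the semi-direct product decomposition $W_\lm=U\rtimes\cP$, $U=\prod_iW_r(n_i)^{p_i}$, $\cP=\prod_i\cP_i$, of Theorem \ref{thm:weyl-main-1} (working throughout on the level of the universal covering groups, on which $\iota$ and the operations below lift). First I would note that for the concrete realization of $W_\lm\subset N_G(H_\lm)$ given there the array $\iota(h)g$ again has all of its first blocks invertible, so $\iota^{-1}(\iota(h)g)$ is well defined; writing $h'=\iota^{-1}(\iota(h)g_1)$, so that $\iota(h)g_1g_2=\iota(h')g_2$, and using that $\rho$ is a homomorphism while ${}^{t}(AB)={}^{t}B\,{}^{t}A$, one gets from (\ref{eq:action-5}) for $g_1$ and for $g_2$ that $\chi(\iota^{-1}(\iota(h)g_1g_2);\al)=\chi(\iota^{-1}(\iota(h')g_2);\al)=\chi(h';\al\cdot{}^{t}\rho(g_2))=\chi(h;\al\cdot{}^{t}\rho(g_2)\,{}^{t}\rho(g_1))=\chi(h;\al\cdot{}^{t}\rho(g_1g_2))$. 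Hence it suffices to check (\ref{eq:action-5}) on generators; and by Propositions \ref{prop:weyl-2} and \ref{prop:weyl-3}, together with the compatibility of $\chi$ (Proposition \ref{prop:char-conf-1}) and of $\rho$ with the block decomposition of $N_G(H_\lm)$, the only cases left are $g=\mu(c)\in W_r(n)$ acting on a single Jordan factor (write $n=n_i$, with $\chi$ reducing to $\chi_n(\cdot\,;\al)$, $\al=(\al_0,\dots,\al_{n-1})$) and $g=P_\sm\in\cP_i$ permuting the $p_i$ copies of $J_r(n_i)$.

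For $g=\mu(c)=(\mu_{i,j}(c)1_r)_{0\le i,j<n}$, I would use the identification $J_r(n)\simeq(R[w]/(w^n))^{\times}$ of Section \ref{subsec:Jordan-group}: the $k$-th block of $\iota(h)\mu(c)$ equals $\sum_j\mu_{j,k}(c)h_j$, which is exactly the coefficient of $w^k$ in $h(M(c,w))=\sum_jh_jM(c,w)^j$. So right multiplication by $\mu(c)$ corresponds under $\iota$ to the substitution $w\mapsto M(c,w)$ in $h=\sum_kh_kw^k$. As $c_1\ne0$, this substitution is a $\C$-algebra automorphism of $R[w]/(w^n)$ fixing $R$; hence it fixes the constant term $h_0$, commutes with the splitting $h=h_0\cdot\underline h$, and commutes with $\log$. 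Therefore, by (\ref{eq:char-conf-0-1}) and the composition law of Lemma \ref{lem:weyl-6}(3) for the $\mu_{i,j}$, the $\theta_k$ transform by $\theta_l(\underline h\circ M(c,\cdot))=\sum_k\mu_{k,l}(c)\,\theta_k(\underline h)$. Inserting this into the formula of Proposition \ref{prop:Radon-conf-3-1} gives $\chi_n(\iota^{-1}(\iota(h)\mu(c));\al)=(\det h_0)^{\al_0}\exp\bigl(\sum_k(\sum_l\al_l\mu_{k,l}(c))\,\Tr\theta_k(\underline h)\bigr)=\chi_n(h;\al')$ with $\al'_k=\sum_l\al_l\mu_{k,l}(c)$; and since $\rho(\mu(c))=(\mu_{i,j}(c))_{0\le i,j<n}$, while $\mu_{0,0}=1$ and $\mu_{0,j}=\mu_{i,0}=0$ for $i,j\ge1$, this $\al'$ is precisely $\al\cdot{}^{t}\rho(\mu(c))$, which is (\ref{eq:action-5}).

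For $g=P_\sm$ ($\sm\in\S_{p_i}$), right multiplication of the coordinate array $\iota(h)$ by $P_\sm$ simply permutes, according to $\sm$, the $p_i$ groups of coefficient blocks belonging to the copies of $J_r(n_i)$; hence $\iota^{-1}(\iota(h)P_\sm)$ is $h$ with the $i$-th batch of its Jordan factors permuted, and comparing with the product form of $\chi$ and relabelling yields $\chi(\iota^{-1}(\iota(h)P_\sm);\al)=\chi(h;\al\cdot{}^{t}\rho(P_\sm))$, where $\rho(P_\sm)\in\GL n$ is the corresponding block permutation. The closing ``in particular'' assertion is then the case $g=P_\sm$ of (\ref{eq:action-5}) after renaming the parameter: with $\sm(\al)$ the permutation of the parameter blocks of $\al$ characterised by $\sm(\al)\cdot{}^{t}\rho(P_\sm)=\al$, one obtains $\chi(\iota^{-1}(\iota(h)P_\sm);\sm(\al))=\chi(h;\sm(\al)\cdot{}^{t}\rho(P_\sm))=\chi(h;\al)$.

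I expect the main obstacle to be the continuous case: establishing cleanly that right multiplication by $\mu(c)$ realizes the polynomial substitution $w\mapsto M(c,w)$, and then tracking the transformation of the $\theta_k$ through the interchange of $\log$ with this substitution and through the splitting $h=h_0\cdot\underline h$ — this is exactly where the composition identities for the $\mu_{i,j}$ in Lemma \ref{lem:weyl-6} are needed. The reduction to a single Jordan factor, the passage from generators to general $g$, and the permutation bookkeeping are comparatively formal.
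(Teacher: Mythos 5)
Your proposal is correct and follows essentially the same route as the paper: reduce to the generators coming from the decomposition $W_{\lm}=U\rtimes\cP$, handle a single Jordan factor for the continuous part by showing $\theta_l(h')=\sum_k\mu_{k,l}(c)\,\theta_k(h)$ (the paper derives this same identity $\Xi(h')=\Xi(h)g$ by computing $\exp(\sum_i\theta_i(h')T^i)$ against the column of powers of $T$, whereas you phrase it as the substitution automorphism $w\mapsto M(c,w)$ commuting with $\log$ — the same computation, already anticipated by the paper's remark identifying $W_r(n)$ with $\auto(\C[T]/(T^n))$), and do the permutation bookkeeping for $\cP$. The only cosmetic difference is that you spell out the cocycle/multiplicativity step reducing to generators, which the paper dispatches in one sentence.
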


\begin{rem}
The map $h\mapsto\iota^{-1}(\iota(h)g)$ in (\ref{eq:action-5}) can
be written simply as $h\mapsto g^{-1}hg$ for $h\in\tilde{H}_{\lm}$
and $g\in W_{\lm}.$
\end{rem}

The proof of Proposition \ref{prop:act-2} will be given after stating
the results obtained from this proposition. We immediately see the
following result.
\begin{cor}
For $g\in W_{\lm}$ and $z\in Z$, we have 
\begin{equation}
\chi(\iota^{-1}(tzg);\al)=\chi(\iota^{-1}(tz);\al\cdot\,^{t}\rho(g)).\label{eq:action-1}
\end{equation}
 In particular, for $g\in\cP,$ we have 
\begin{equation}
\chi(\iota^{-1}(tzg);\al\cdot\rho(g))=\chi(\iota^{-1}(tz);\al).\label{eq:action-2}
\end{equation}
 
\end{cor}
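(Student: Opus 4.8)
The plan is to obtain both identities as formal consequences of Proposition \ref{prop:act-2}, whose two displayed identities are pointwise statements valid for \emph{every} $h\in\tilde{H}_{\lm}$. The only observation needed is that, for generic homogeneous coordinate $t$, the matrix $tz$ is itself the $\iota$-image of a point of $\tilde{H}_{\lm}$. Concretely, I would fix $g\in W_{\lm}$ and $z\in Z$ and restrict to $t\in\mat(r,m)$ with $\det(tz_{0}^{(j)})\neq0$ for all $j$, which holds on a Zariski open set by the definition (\ref{eq:radon-00}) of $Z$. For such $t$ the element $tz=\diag(tz^{(1)},\dots,tz^{(\ell)})$ lies in $\iota(H_{\lm})$, so I may set $h:=\iota^{-1}(tz)\in\tilde{H}_{\lm}$, and then $\iota(h)=tz$ by construction.

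Next I would exploit associativity of matrix multiplication: $\iota(h)\,g=(tz)\,g=t\,(zg)$, so applying $\iota^{-1}$ to the common matrix gives $\iota^{-1}(\iota(h)g)=\iota^{-1}(tzg)$. Substituting $h=\iota^{-1}(tz)$ into the first identity of Proposition \ref{prop:act-2},
\[
\chi(\iota^{-1}(\iota(h)g);\al)=\chi(h;\al\cdot\,^{t}\rho(g)),
\]
then reads off exactly $\chi(\iota^{-1}(tzg);\al)=\chi(\iota^{-1}(tz);\al\cdot\,^{t}\rho(g))$, which is (\ref{eq:action-1}). Since both sides are holomorphic in $t$ (equivalently, after passing to the affine chart, in the integration variable $u$) away from the branch/pole loci, the identity obtained on a Zariski open set of $t$ persists wherever $\iota^{-1}(tz)$ is defined.

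For the ``in particular'' assertion (\ref{eq:action-2}) I would restrict to $g\in\cP$. By Theorem \ref{thm:weyl-main-1} and the definition (\ref{eq:action-0-1}) of $\rho$, the matrix $\rho(g)\in\GL n$ is then a permutation matrix, hence orthogonal, so $\rho(g)\cdot\,^{t}\rho(g)=1_{n}$. Replacing $\al$ by $\al\cdot\rho(g)$ in (\ref{eq:action-1}) gives
\[
\chi(\iota^{-1}(tzg);\al\cdot\rho(g))=\chi(\iota^{-1}(tz);\al\cdot\rho(g)\cdot\,^{t}\rho(g))=\chi(\iota^{-1}(tz);\al),
\]
which is (\ref{eq:action-2}); alternatively the same conclusion follows at once from the second identity of Proposition \ref{prop:act-2} once $\sm(\al)$ is identified with $\al\cdot\rho(P_{\sm})$ under the block labelling (\ref{eq:action-0}) of $\al$. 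I do not expect a genuine obstacle here, as the result is a direct corollary; the only points needing care are the genericity of $t$ that makes $\iota^{-1}(tz)$ meaningful and the bookkeeping that right multiplication by the $\GL n$-matrix $\rho(g)$ acts on $\al\in\C^{n}$ compatibly with its block decomposition (\ref{eq:action-0}). Once these are in place, everything collapses to the pointwise identity of Proposition \ref{prop:act-2}.
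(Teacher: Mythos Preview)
Your proposal is correct and matches the paper's approach: the paper introduces the corollary with ``We immediately see the following result,'' giving no further argument, and your write-up supplies exactly the routine substitution $h:=\iota^{-1}(tz)$ into Proposition~\ref{prop:act-2} together with the orthogonality of $\rho(g)$ for $g\in\cP$ that the paper itself uses (cf.\ the proof of Proposition~\ref{prop:act-2} for $g\in\cP$, where $^{t}\rho(g)=\rho(g)^{-1}$ is invoked).
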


Integrating the relations (\ref{eq:action-1}) and (\ref{eq:action-2})
on the same cycle, we get the following result.
\begin{thm}
\label{thm:main-2}Let $F(z,\al;C)$ be the Radon HGF of type $\lm.$
Then we have the following transformation formulae.

(1) For $g\in W_{\lm},$ we have 
\[
F(zg,\al;C)=F(z,\al\cdot\,^{t}\rho(g);C).
\]

(2) For $P_{\sm}\in\cP$ corresponding $\sm\in\prod_{1\leq i\leq s}\S_{p_{i}}$,
we have 
\begin{equation}
F(zP_{\sm},\sm(\al);C)=F(z,\al;C).\label{eq:action-8}
\end{equation}
 
\end{thm}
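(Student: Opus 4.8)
The plan is to deduce Theorem \ref{thm:main-2} directly from Proposition \ref{prop:act-2} (equivalently, its Corollary, equations (\ref{eq:action-1}) and (\ref{eq:action-2})) together with the defining integral (\ref{eq:radon-4}) of the Radon HGF. The whole point is that once we know how the character transforms under the substitution $z\mapsto zg$, and once we observe that the measure $\tau(t)$ and the cycle $C$ are unaffected by acting on $z$ by an element of $N_G(H_\lm)$, the two formulae follow by simply integrating the pointwise identities over a common cycle.

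First I would record that, by Lemma \ref{lem:act-1}, for $g\in W_\lm\subset N_G(H_\lm)$ and $z\in Z$ we indeed have $zg\in Z$, so $F(zg,\al;C)$ is well defined; and since $g$ acts on $z$ from the right while $t\in\mat'(r,m)$ and $\tau(t)$ live entirely in the $t$-variables, the integration variable, its form $\tau(t)$, and the chart $U$ are untouched by the substitution. Next I would invoke the Corollary to Proposition \ref{prop:act-2}: for all generic $t$,
\[
\chi_\lm(\iota^{-1}(t\,zg);\al)=\chi_\lm(\iota^{-1}(tz);\al\cdot{}^t\rho(g)).
\]
At this point one must check that the branch locus, the local system $\cL_z$, and the family of supports $\Phi_z$ attached to $zg$ (with parameter $\al$) coincide with those attached to $z$ (with parameter $\al\cdot{}^t\rho(g)$); this is where a little care is needed, but it follows because $S^{(j)}_{zg}$ is obtained from the $S^{(j)}_z$ by the invertible linear reshuffling $\rho(g)$ of the blocks — indeed $\rho(g)\in\GL n$ permits one to identify $X_{zg}$ with $X_z$ — and the monodromy data and the real part of $g(t,z)$ transform accordingly. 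Hence a cycle $C=\{C(z)\}$ for the left-hand side pulls back to a legitimate cycle for the right-hand side, and integrating the displayed identity against $\tau(t)$ over $C(z)$ gives assertion (1). Assertion (2) is the special case $g=P_\sm\in\cP$: here $\rho(P_\sm)$ is a block permutation matrix, ${}^t\rho(P_\sm)$ implements the permutation $\sm$ on the parameter blocks, and replacing $\al$ by $\sm(\al)$ in (1) — or applying (\ref{eq:action-2}) directly — yields (\ref{eq:action-8}).

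The main obstacle I anticipate is not the algebra of the character identity (that is already done in Proposition \ref{prop:act-2}) but the bookkeeping for the homology classes: one has to be convinced that the correspondence $X_{zg}\xrightarrow{\sim}X_z$ induced by $\rho(g)$ carries $\Phi_{zg}$ to $\Phi_z$ and $\cL_{zg}$ to $\cL_z$ compatibly, so that "the same cycle $C$" on both sides makes literal sense and the equality of integrals is not merely formal. Since $\rho(g)$ is an element of $\GL n$ acting by block substitutions that fix $T=\gras(r,m)$ pointwise, this identification is transparent, and I would spell it out in one or two sentences rather than belabor it. With that understood, the proof is a two-line consequence of Proposition \ref{prop:act-2} and the definition of $F_\lm$, which is presumably why the paper states it as a theorem obtained "by integrating the relations on the same cycle."
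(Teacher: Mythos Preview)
Your proposal is correct and follows essentially the same approach as the paper: the paper's entire proof is the single sentence ``Integrating the relations (\ref{eq:action-1}) and (\ref{eq:action-2}) on the same cycle, we get the following result,'' and you have reproduced exactly that argument, together with some additional (and welcome) care about why $X_{zg}=X_z$, $\cL_{zg}=\cL_z$ and $\Phi_{zg}=\Phi_z$ so that ``the same cycle'' makes sense. If anything, you have been more explicit than the paper about the homological bookkeeping; nothing further is needed.
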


The following result is a consequence of assertion (1) of Theorem
\ref{thm:main-2}, which asserts that by the action of the continuous
part $U$ of the Weyl group $W_{\lm}$, the parameter $\al^{(i,k)}=(\al_{0}^{(i,k)},\dots,\al_{n_{i}-1}^{(i,k)})$
in $F_{\lm}(z,\al;C)$ can be taken to $(\al_{0}^{(i,k)},0,\dots,0,a)$,
where $a$ is an arbitrary nonzero complex number.
\begin{prop}
\label{prop:act-3} Let $\al\in\C^{n}$ be the parameter for the Radon
HGF $F(z,\al;C)$ expressed as in (\ref{eq:action-0}). For any parameter
\[
\be=(\be{}^{(1,1)},\dots,\be^{(1,p_{1})},\dots,\be^{(s,1)},\dots,\be^{(s,p_{s})})\in\C^{n},\quad\be^{(i,k)}=(\be_{0}^{(i,k)},\dots,\be_{n_{i}-1}^{(i,k)})\in\C^{n_{i}}
\]
satisfying Assumption \ref{assu:Radon-conf-4-1} and $\al{}_{0}^{(i,k)}=\be{}_{0}^{(i,k)}\;(1\leq i\leq s,1\leq k\le p_{i})$,
there exists $g\in U$ such that the change of variables $z\mapsto z'=zg^{-1}$
transforms $F(z,\al;C)$ to $F(z',\be;C)$. In particular we can take
$\be$ as 
\[
\be{}^{(i,k)}=(\al{}_{0}^{(i,k)},0,\dots,0,1).
\]
\end{prop}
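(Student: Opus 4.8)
The plan is to use assertion (1) of Theorem~\ref{thm:main-2}, which gives $F(zg,\al;C)=F(z,\al\cdot{}^{t}\rho(g);C)$ for $g\in W_{\lm}$, and reduce the statement to a purely group-theoretic claim: namely that the linear action $\al\mapsto\al\cdot{}^{t}\rho(g)$ of the continuous part $U=\prod_{i}W_{r}(n_{i})^{p_{i}}$ on the parameter space $\C^{n}$ preserves each ``constant term'' $\al_{0}^{(i,k)}$ and acts transitively on the remaining coordinates of each block $\al^{(i,k)}$ among those with $\al_{n_{i}-1}^{(i,k)}\neq0$. Since $U$ is a direct product over the blocks $(i,k)$ and $\rho$ restricted to a single factor $W_{r}(n_{i})$ is (via the definition~(\ref{eq:weyl-5})) the lower-triangular unipotent-plus-scaling group $W(n_{i})=W_{1}(n_{i})\subset\GL{n_{i}}$ with $(1,j)$-entry $c_j$, it suffices to treat one Jordan block at a time: show that for any $(\al_0,\dots,\al_{p-1})$ with $\al_{p-1}\neq0$ and any $(\be_0,\dots,\be_{p-1})$ with $\be_0=\al_0$, $\be_{p-1}\neq0$ (in the intended application $\be=(\al_0,0,\dots,0,1)$), there is $c=(c_1,\dots,c_{p-1})$, $c_1\neq0$, with $(\al_0,\dots,\al_{p-1})\cdot{}^{t}\mu(c)=(\be_0,\dots,\be_{p-1})$, where $\mu(c)=(\mu_{i,j}(c))$.

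First I would make explicit the action of $\rho(g)$ on parameters coming from Proposition~\ref{prop:act-2}: for a single Jordan-cell block the statement $\chi(\iota^{-1}(\iota(h)g);\al)=\chi(h;\al\cdot{}^{t}\rho(g))$ together with the formula for $\chi_p$ in Proposition~\ref{prop:Radon-conf-3-1} shows that under $h\mapsto g^{-1}hg$ the coefficient $\al_0$ is attached to $\det h_0$, hence unchanged, while the coefficients $(\al_1,\dots,\al_{p-1})$ transform linearly by ${}^{t}\mu(c)$ (restricted to rows and columns $1,\dots,p-1$); this is exactly the content already encoded in the remark that $\al\mapsto\al\cdot{}^{t}\rho(g)$ with $\rho(g)=g'$. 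So the problem becomes: the orbit of $(\al_1,\dots,\al_{p-1})$ under right multiplication by ${}^{t}\mu(c)$, $c_1\neq 0$, equals the set of all $(\be_1,\dots,\be_{p-1})$ with $\be_{p-1}\neq 0$ when $\al_{p-1}\neq 0$. Here I would use the group law $\mu(c)\mu(c')=\mu(c'')$ from Lemma~\ref{lem:weyl-6}(3) (equivalently, composition of $T\mapsto M(c,T)$), so $\{{}^{t}\mu(c)\}$ is a group and I only need to solve one equation $(\al_1,\dots,\al_{p-1})\cdot{}^{t}\mu(c)=(0,\dots,0,1)$, i.e.\ find $c$ with $\sum_{k}\al_k\mu_{k,j}(c)=\de_{j,p-1}$ for $j=1,\dots,p-1$. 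Because $\mu_{k,j}(c)=0$ for $k>j$ and $\mu_{j,j}(c)=c_1^{j}$, this is a triangular system in $c_1,\dots,c_{p-1}$ whose $j$-th equation reads $\al_j c_1^{j}+(\text{terms involving }c_1,\dots,c_{j-1})=\de_{j,p-1}$; solving from $j=p-1$ downward gives $c_1^{p-1}=1/\al_{p-1}$ (using $\al_{p-1}\neq 0$, hence $c_1\neq 0$) and then $c_{j-1},\dots$ are determined successively and uniquely. The general $\be$ with $\be_0=\al_0$, $\be_{p-1}\neq0$ is reached by composing this with the inverse of the analogous map for $\be$, using that $\{\mu(c)\}$ is a group.

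Having solved the one-block case, I would assemble: for the full partition take $g=\diag$ over $(i,k)$ of the element $\mu(c^{(i,k)})\in W_{r}(n_{i})$ obtained above, so $g\in U$, and then $\rho(g)$ is block-diagonal with the blocks $\mu(c^{(i,k)})$, whence $\al\cdot{}^{t}\rho(g)=\be$ blockwise with all the $\al_0^{(i,k)}$ fixed; set $z'=zg^{-1}$, so $z=z'g$ and Theorem~\ref{thm:main-2}(1) gives $F(z,\al;C)=F(z'g,\al;C)=F(z',\al\cdot{}^{t}\rho(g);C)=F(z',\be;C)$. One should also check $z'\in Z$, which follows from Lemma~\ref{lem:act-1} applied to $g^{-1}\in N_G(H_{\lm})$, and note that $\be$ of the stated special form satisfies Assumption~\ref{assu:Radon-conf-4-1}: (i) is inherited since $\be_0^{(i,k)}=\al_0^{(i,k)}$, (ii) holds since $\be_{n_i-1}^{(i,k)}=1\neq0$ when $n_i\geq2$, and (iii) is inherited because $\sum_{i,k}\be_0^{(i,k)}=\sum_{i,k}\al_0^{(i,k)}=-m$.

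The main obstacle is the solvability and invertibility of the triangular polynomial system $\sum_k\al_k\mu_{k,j}(c)=\be_j$: one must verify that the leading coefficient of $c_1$ in the $j$-th equation is always a nonzero multiple of $\al_j$ or, more precisely, that after fixing $c_1$ from the top equation the system for $c_2,\dots,c_{p-1}$ stays linear and nonsingular. I expect this to be routine once one observes $\mu_{j,j}(c)=c_1^{j}$ and $\mu_{k,j}(c)$ for $k<j$ involves only $c_1,\dots,c_{j-k+1}$ linearly in $c_{j-k+1}$; the clean way to argue is to avoid the matrix entries altogether and work with the substitution group $T\mapsto M(c,T)$, where the claim ``$\sum_k\al_kT^k$ can be mapped to $T^{p-1}\bmod T^p$ by a substitution $T\mapsto c_1T+\cdots$ with $c_1\neq0$ iff $\al_{p-1}\neq0$'' is transparent: apply the inverse substitution turning $\sum\al_kT^k$ into its own normal form. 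The only genuine content beyond bookkeeping is the reduction to one block, which is immediate from the direct-product structure of $U$ and $\rho$, and the transitivity on blocks with $\al_{p-1}\neq0$, which Assumption~\ref{assu:Radon-conf-4-1}(ii) guarantees is exactly the relevant regime.
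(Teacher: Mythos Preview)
Your overall strategy is exactly the paper's: reduce to a single Jordan block via the direct-product structure of $U$, then solve a triangular system in $c_{1},\dots,c_{p-1}$ starting from the top index, using $\al_{p-1}\neq0$. Your additional remarks (using the group law to reach a general $\be$, checking $z'\in Z$ via Lemma~\ref{lem:act-1}, and verifying Assumption~\ref{assu:Radon-conf-4-1} for the special $\be$) are all correct and in fact make the argument more complete than the paper's own proof.

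There is, however, a genuine slip in your explicit system. From $\be=\al\cdot{}^{t}\rho(g)$ with $\rho(g)=(\mu_{i,j}(c))$ one gets
\[
\be_{j}=\sum_{k}\al_{k}\,\mu_{j,k}(c)=\al_{j}c_{1}^{j}+\al_{j+1}\mu_{j,j+1}(c)+\cdots+\al_{p-1}\mu_{j,p-1}(c),
\]
not $\sum_{k}\al_{k}\mu_{k,j}(c)$ as you wrote; the indices of $\mu$ are transposed. With your version the equation for $j=1$ would read $\al_{1}c_{1}=0$, which is already impossible. With the correct version the triangular structure is the one the paper uses: the equation for $j=p-1$ contains only $c_{1}$ (giving $\al_{p-1}c_{1}^{p-1}=\be_{p-1}$), and for $j\leq p-2$ the only occurrence of the new unknown $c_{p-j}$ is in the term $\al_{p-1}\mu_{j,p-1}(c)$, with coefficient $j\,\al_{p-1}c_{1}^{j-1}\neq0$. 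So it is $\al_{p-1}\neq0$ (not $\al_{j}$) that makes each step solvable, exactly as Assumption~\ref{assu:Radon-conf-4-1}(ii) provides. Once this index is corrected your argument goes through verbatim.
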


\begin{proof}
The identity component $U=\prod_{1\leq i\leq s}W_{r}(n_{i})^{p_{i}}$
of $W_{\lm}$ acts on the space $\C^{n}$ of parameters $\al$ blockwise,
namely for $g\in U$, $\rho(g)\in\GL n$ defined by (\ref{eq:action-0-1})
has the form 
\[
\rho(g)=\diag(\rho(g)^{(1,1)},\dots,\rho(g)^{(1,p_{1})},\dots,\rho(g)^{(s,1)},\dots,\rho(g)^{(s,p_{s})}),\quad\rho(g)^{(i,k)}\in\GL{n_{i}}
\]
and it acts on the $(i,k)$-block $\al^{(i,k)}\in\C^{n_{i}}$ of $\al$
by
\[
\al^{(i,k)}\mapsto\al^{(i,k)}\cdot\,{}^{t}\rho(g)^{(i,k)}.
\]
So it is sufficient to prove the assertion in the particular case
$\lm=(n).$ We show that, for any $\al=(\al_{0},\dots,\al_{n-1})\in\C^{n}$
satisfying $\al_{n-1}\neq0$, there exists $g\in W_{r}(n)$ such that
\[
\be=\al\cdot{}^{t}\rho(g)=(\al_{0},0,\dots,0,1).
\]
Recall that $g\in W_{r}(n)$ has the form $g=(\mu_{i,j}(c)\cdot1_{r})_{0\leq i,j<n}\in\GL{rn}$,
where we can take $c=(c_{1},\dots,c_{n-1})\in\C^{n-1}$ arbitrarily
under the condition $c_{1}\neq0$ and $\rho(g)=(\mu_{i,j}(c))_{0\leq i,j<n}\in\GL n$.
From assertion (1) of Lemma \ref{lem:weyl-6}, we have 
\begin{equation}
\be_{i}=(\al\cdot{}^{t}\rho(g))_{i}=\sum_{0\leq j<n}\al_{j}\mu_{i,j}(c)=\al_{i}\mu_{i,i}(c)+\cdots+\al_{n-1}\mu_{i,n-1}(c).\label{eq:action-3}
\end{equation}
Consider (\ref{eq:action-3}) in case $i=n-1.$ Then $\be{}_{n-1}=\al_{n-1}\mu_{n-1,n-1}(c)$.
Noting that $\mu_{n-1,n-1}=c_{1}^{n-1}$ and $\al_{n-1}\neq1,$ we
can choose $c_{1}\neq0$ so that $\be{}_{n-1}=1.$ Next we consider
(\ref{eq:action-3}) for $i=n-2$ and we have 
\begin{equation}
\be{}_{n-2}=\al_{n-2}\mu_{n-2,n-2}(c)+\al_{n-1}\mu_{n-2,n-1}(c).\label{eq:action-4}
\end{equation}
 Note that we see from (\ref{eq:weyl-6-1}), the terms $\mu_{n-2,n-2}$
and $\mu_{n-2,n-1}$ has the form 
\[
\mu_{n-2,n-2}=c_{1}^{n-2},\ \ \mu_{n-2,n-1}=(n-2)c_{1}^{n-3}c_{2}.
\]
Using the condition $\al_{n-1}\neq0,$ we can determine $c_{2}$ so
that the right hand side of (\ref{eq:action-4}) becomes $0$. Proceeding
in inductive manner, we can choose $c_{3},\dots,c_{n-1}$ so that
$\be{}_{n-3},\dots,\be{}_{1}$ become all zero. Lastly from condition
(\ref{eq:action-3}) for $i=0$, we have $\be{}_{0}=\al_{0}$ because
$\mu_{0,j}(c)=\delta_{0,j}$ by  definition. 
\end{proof}
The rest of this section is devoted to the proof of Proposition \ref{prop:act-2}.
First we prove the proposition for $g\in U.$ Since $g$ acts on $H_{\lm}$
and on $\C^{n}$ blockwise as explained in the proof Proposition \ref{prop:act-3},
it is sufficient to prove the proposition for the special case where
$\lm=(n)$, namely $H_{\lm}=J_{r}(n)$ and $\chi$ is a character
of $\tilde{J}_{r}(n).$
\begin{lem}
Assume $\lm=(n)$ and $H_{\lm}=J_{r}(n)$. Then the identity (\ref{eq:action-5})
holds for any $g\in W_{r}(n).$ 
\end{lem}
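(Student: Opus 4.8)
The plan is to pass to the algebra picture $J_{r}(n)\simeq(R[w]/(w^{n}))^{\times}$ with $R=\mat(r)$, under which $h=\sum_{0\le i<n}h_{i}w^{i}$ is identified with $\iota(h)=(h_{0},\dots,h_{n-1})$. First I would show that right multiplication by $g=\mu(c)\in W_{r}(n)$ realizes a substitution automorphism. Since the $j$-th block of $\iota(h)g$ is $\sum_{0\le i<n}\mu_{i,j}(c)h_{i}$ and $\sum_{j}\mu_{i,j}(c)w^{j}=M(c,w)^{i}$ by the definition of the $\mu_{i,j}$, the element $k:=\iota^{-1}(\iota(h)g)$ is $k(w)=\sum_{i}h_{i}M(c,w)^{i}=\phi_{c}(h)$, where $\phi_{c}$ is the $\C$-algebra automorphism of $R[w]/(w^{n})$ determined by $\phi_{c}(w)=M(c,w)=c_{1}w+\dots+c_{n-1}w^{n-1}$ (invertible since $c_{1}\ne0$). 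As $\phi_{c}$ is a ring automorphism it preserves units, so $k\in J_{r}(n)$, and it fixes the subalgebra $R$, so $k_{0}=h_{0}$; in particular $k$ lies in the domain of $\iota^{-1}$ and, writing $k=k_{0}\cdot\underline{k}$, we get $\underline{k}=h_{0}^{-1}k=\phi_{c}(h_{0}^{-1}h)=\phi_{c}(\underline{h})$.

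Next I would relate $\theta_{j}(\underline{k})$ to the $\theta_{i}(\underline{h})$. Because $\phi_{c}$ is a $\C$-algebra homomorphism and the series defining $\log$ on $\jro(n)$ is a polynomial expression (terms of weight $\ge n$ vanish), $\phi_{c}$ commutes with $\log$, so $\log\underline{k}=\phi_{c}(\log\underline{h})$. Writing $\log\underline{h}=\sum_{1\le i<n}\theta_{i}(\underline{h})w^{i}$ and applying $\phi_{c}$ via $\phi_{c}(w^{i})=M(c,w)^{i}=\sum_{j}\mu_{i,j}(c)w^{j}$, then comparing the coefficient of $w^{j}$ on the two sides of $\log\underline{k}=\sum_{j}\theta_{j}(\underline{k})w^{j}$, I obtain
\[
\theta_{j}(\underline{k})=\sum_{1\le i<n}\mu_{i,j}(c)\,\theta_{i}(\underline{h})\qquad(1\le j<n).
\]

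Finally I would substitute into the explicit form of the character from Proposition \ref{prop:Radon-conf-3-1}. For the determinant factor, $k_{0}=h_{0}$ and, since $\mu_{0,j}(c)=\delta_{0,j}$, one has $(\al\cdot{}^{t}\rho(g))_{0}=\sum_{j}\al_{j}\mu_{0,j}(c)=\al_{0}$, hence $(\det k_{0})^{\al_{0}}=(\det h_{0})^{(\al\cdot{}^{t}\rho(g))_{0}}$. For the exponential factor, using the displayed relation and $\rho(g)=(\mu_{i,j}(c))_{0\le i,j<n}$,
\[
\sum_{1\le j<n}\al_{j}\Tr\,\theta_{j}(\underline{k})=\sum_{1\le i<n}\Bigl(\sum_{1\le j<n}\al_{j}\mu_{i,j}(c)\Bigr)\Tr\,\theta_{i}(\underline{h})=\sum_{1\le i<n}(\al\cdot{}^{t}\rho(g))_{i}\,\Tr\,\theta_{i}(\underline{h}).
\]
Multiplying the two factors yields $\chi_{n}(k;\al)=\chi_{n}(h;\al\cdot{}^{t}\rho(g))$, which is (\ref{eq:action-5}) for $\lm=(n)$; the passage to the universal cover $\tilde{J}_{r}(n)$ is harmless since the only multivalued ingredient, $(\det h_{0})^{\al_{0}}$, is untouched by $\phi_{c}$ (both sides use the same branch of $(\det h_{0})^{\al_{0}}$, the exponents agreeing in the $0$-th slot). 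The one point requiring care is the first step — verifying that right multiplication by $\mu(c)$ is exactly the substitution $w\mapsto M(c,w)$, together with the transpose bookkeeping in $\al\cdot{}^{t}\rho(g)$ — but with the algebra dictionary of Section \ref{sec:Weyl-group} in hand this is a short computation, and the commutation of $\phi_{c}$ with $\log$ uses only that $\phi_{c}$ is an algebra map, not merely $\C$-linear.
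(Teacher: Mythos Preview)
Your proof is correct and follows essentially the same route as the paper's. Both arguments identify the right action of $g=\mu(c)\in W_{r}(n)$ on $\iota(h)$ with the substitution automorphism $w\mapsto M(c,w)$ of $R[w]/(w^{n})$, and then use that this automorphism commutes with $\log$/$\exp$ to obtain the linear relation between the $\theta$-components; the paper phrases this via the single matrix identity $\Xi(h')=\Xi(h)g$ with $\Xi(h)=(\theta_{0}(h),\dots,\theta_{n-1}(h))$ (so absorbing the determinant factor into a $\theta_{0}$ term), whereas you treat the determinant factor $(\det h_{0})^{\alpha_{0}}$ separately and work only with $\theta_{i}(\underline{h})$ for $i\ge1$, but this is a cosmetic difference.
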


\begin{proof}
Take $g\in W_{r}(n).$ For $h\in J_{r}(n),$ put 
\[
h'=\iota^{-1}(\iota(h)g).
\]
 Then we have 
\begin{align*}
\log\chi(h';\al) & =\sum_{0\leq i<n}\al_{i}\Tr\,\theta_{i}(h')\\
 & =(\Tr\,\theta_{0}(h'),\dots,\Tr\,\theta_{n-1}(h'))\left(\begin{array}{c}
\al_{0}\\
\vdots\\
\al_{n-1}
\end{array}\right)
\end{align*}
 Set $\Xi(h):=(\theta_{0}(h),\dots,\theta_{n-1}(h))\in\mat(r,nr)$
and assert that the following identity holds. 
\begin{equation}
\Xi(h')=\Xi(h)g.\label{eq:action-7}
\end{equation}
 In fact, by the definition of the function $\theta_{i}(h)$, we have
\begin{align}
\exp\left(\sum_{i}\theta_{i}(h')T^{i}\right) & \equiv h_{0}'+h_{1}'T+\cdots+h_{n-1}'T^{n-1}\quad\text{mod. \ensuremath{T^{n}}}\nonumber \\
 & =\iota(h)g\left(\begin{array}{c}
1_{r}\\
T\cdot1_{r}\\
\vdots\\
T^{n-1}\cdot1_{r}
\end{array}\right).\label{eq:action-6}
\end{align}
Since $g\in W_{r}(n)$ has the form $g=(\mu_{i,j}(x)\cdot1_{r})_{1\leq i,j<n}$
with some $x=(x_{1},\dots,x_{n-1})\in\C^{n-1},$ from the definition
$\mu_{i,j}(x),$ we have 
\[
g\left(\begin{array}{c}
1_{r}\\
T\cdot1_{r}\\
\vdots\\
T^{n-1}\cdot1_{r}
\end{array}\right)\equiv\left(\begin{array}{c}
1_{r}\\
f(x,T)\cdot1_{r}\\
\vdots\\
f(x,T)^{n-1}\cdot1_{r}
\end{array}\right)\quad\text{mod. \ensuremath{T^{n}}},
\]
where $f(x,T)=x_{1}T+\cdots+x_{n-1}T^{n-1}.$ Therefore the right
hand side of (\ref{eq:action-6}) equals $\exp\left(\sum_{i}\theta_{i}(h)f^{i}\right)$
modulo $T^{n}.$ It follows that 
\[
\Xi(h')\left(\begin{array}{c}
1_{r}\\
T\cdot1_{r}\\
\vdots\\
T^{n-1}\cdot1_{r}
\end{array}\right)\equiv\Xi(h)\left(\begin{array}{c}
1_{r}\\
f(x,T)\cdot1_{r}\\
\vdots\\
f(x,T)^{n-1}\cdot1_{r}
\end{array}\right)\equiv\Xi(h)g\left(\begin{array}{c}
1_{r}\\
T\cdot1_{r}\\
\vdots\\
T^{n-1}\cdot1_{r}
\end{array}\right)\quad\text{mod.}\ \ T^{n}.
\]
Thus we have the identity (\ref{eq:action-7}). Now the identity (\ref{eq:action-5})
is immediate. In fact, 
\begin{align*}
\log\chi(\iota^{-1}(\iota(h)g);\al) & =(\Tr\,\theta_{0}(h'),\dots,\Tr\,\theta_{n-1}(h')){}^{t}\al\\
 & =(\Tr\,\theta_{0}(h),\dots,\Tr\,\theta_{n-1}(h))\rho(g)\cdot{}^{t}\al\\
 & =(\Tr\,\theta_{0}(h),\dots,\Tr\,\theta_{n-1}(h)){}^{t}(\al\,^{t}\rho(g))\\
 & =\log\chi(h;\al\,^{t}\rho(g))
\end{align*}
by virtue of (\ref{eq:action-7}) and the specific form $g=(\mu_{i,j}(x)\cdot1_{r})_{1\leq i,j<n}$
of $g$, where each $(i,j)$-block is a scalar matrix $\mu_{i,j}(x)\cdot1_{r}$.
Exponentiating this identity, we get the desired identity (\ref{eq:action-5})
in this particular case. 
\end{proof}
Next we want to prove (\ref{eq:action-5}) for $g\in\cP.$ Taking
account of the structure of the group $\cP=\prod_{1\leq i\leq s}\cP_{i},$
to show the identity (\ref{eq:action-5}) for $g\in\cP,$ it is enough
to show it for each $\cP_{i}.$ Therefore it will be sufficient to
consider the case that the partition $\lm$ of $n$ has the form $\lm=(l,\dots,l)$
with the length $p$, namely $lp=n,$ and $\cP\simeq\frak{\S}_{p}$
is the subgroup of $\GL{rn}$ consisting of permutation matrices $P$
which is written blockwise as $P=(P_{j,k})_{1\leq j,k\leq p},P_{j,k}\in\mat(r)$
such that $P_{j,k}=0$ or $P_{j,k}=1_{rl}.$ 
\begin{lem}
The identity (\ref{eq:action-5}) holds for the case $H_{\lm}=\prod_{p\,\text{times}}J_{r}(l)\subset\GL{rn}$
and $g\in\cP\simeq\frak{\S}_{p}$. 
\end{lem}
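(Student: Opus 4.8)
The plan is to reduce the statement to the already-established Proposition~\ref{prop:act-2} for $g\in U$, which has been proved via the key intertwining identity $\Xi(h')=\Xi(h)g$, together with a direct combinatorial computation for the block-permutation matrices in $\cP$. Since $\cP=\prod_{1\leq i\leq s}\cP_i$ acts on $H_{\lm}=\prod_{i}H_i$ factor-by-factor, and the characters $\chi$ and the map $\iota$ also decompose along this product, it suffices to treat each factor separately; hence we may assume $\lm=(l,\dots,l)$ with $\ell(\lm)=p$ and $lp=n$, so $H_{\lm}=J_r(l)^{\times p}\subset\GL{rn}$ and $\cP\simeq\S_p$ consists of block-permutation matrices $P_{\sm}=(\de_{j,\sm(k)}\cdot1_{rl})_{1\le j,k\le p}$.

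The main computation is to identify $h':=\iota^{-1}(\iota(h)P_{\sm})$ explicitly. Writing $h=(h^{(1)},\dots,h^{(p)})\in J_r(l)^{\times p}$ with $h^{(j)}=\sum_{0\le k<l}h^{(j)}_k\otimes\La^k$, and recalling that $\iota$ simply lists the blocks $h^{(j)}_k$, conjugation (equivalently, the map $h\mapsto \iota^{-1}(\iota(h)P_{\sm})$) permutes the blocks according to $\sm$: one checks directly that $h'=(h^{(\sm^{-1}(1))},\dots,h^{(\sm^{-1}(p))})$, i.e. the $j$-th Jordan component of $h'$ is the $\sm^{-1}(j)$-th Jordan component of $h$. (Here I would verify using the remark that $h\mapsto\iota^{-1}(\iota(h)g)$ is just $h\mapsto g^{-1}hg$ on $\tilde H_{\lm}$, and $P_{\sm}^{-1}\diag(h^{(1)},\dots,h^{(p)})P_{\sm}$ permutes the diagonal Jordan blocks — this is exactly the $\cP_i$-action on $\prod W_r(n_i)$ described in Theorem~\ref{thm:weyl-main-1}(2), now applied to $H_{\lm}$ itself.) Consequently each $\theta_k(h'^{(j)})=\theta_k(h^{(\sm^{-1}(j))})$, since $\theta_k$ depends only on the entries of a single Jordan component.

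Then the character identity follows by bookkeeping. By Proposition~\ref{prop:char-conf-1},
\[
\chi(h';\al)=\prod_{1\le j\le p}\chi_l(h'^{(j)};\al^{(j)})=\prod_{1\le j\le p}\chi_l(h^{(\sm^{-1}(j))};\al^{(j)})=\prod_{1\le k\le p}\chi_l(h^{(k)};\al^{(\sm(k))}),
\]
re-indexing $k=\sm^{-1}(j)$. The last product is precisely $\chi(h;\al\cdot{}^t\rho(P_{\sm}))$, because $\rho(P_{\sm})\in\GL n$ is the block-permutation acting on $\C^n=\bigoplus_j\C^l$ by sending the $j$-th block of parameters to the $\sm(j)$-th slot; equivalently $\al\cdot{}^t\rho(P_{\sm})$ has $\al^{(\sm(k))}$ in position $k$. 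This proves $\chi(\iota^{-1}(\iota(h)P_{\sm});\al)=\chi(h;\al\cdot{}^t\rho(P_{\sm}))$, which is the first assertion of Proposition~\ref{prop:act-2} for $g\in\cP$; the ``in particular'' statement $\chi(\iota^{-1}(\iota(h)P_{\sm});\sm(\al))=\chi(h;\al)$ is then the special case obtained by replacing $\al$ with $\sm(\al):=\al\cdot{}^t\rho(P_{\sm})^{-1}$ and using that $\rho(P_{\sm})$ is invertible.

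The only genuine obstacle is pinning down the index conventions: one must be careful whether conjugation by $P_{\sm}$ induces $\sm$ or $\sm^{-1}$ on the Jordan components, and correspondingly whether the parameter indices are pushed forward by $\sm$ or pulled back. I would resolve this once and for all by testing a small case ($p=2$, or the $p=3$ example with $(\sm(1),\sm(2),\sm(3))=(2,3,1)$ already displayed in the paper), and then the general statement follows formally. Everything else — the factorization over the product $\prod\cP_i$, the dependence of $\theta_k$ on a single component, and the identification of $\rho(P_{\sm})$ with the block-permutation on $\C^n$ — is routine given the structure theory of Section~\ref{sec:Weyl-group}.
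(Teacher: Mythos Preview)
Your approach is essentially the same as the paper's: both reduce to the observation that $P_{\sm}$ permutes the Jordan components of $h$ and the corresponding blocks of $\al$, so that the product $\prod_{j}\chi_{l}(h^{(j)};\al^{(j)})$ is unchanged after a relabeling; the paper simply computes $\iota(h)P_{\sm}=(h^{(\sm(1))},\dots,h^{(\sm(p))})$ and $\al\cdot\rho(P_{\sm})=(\al^{(\sm(1))},\dots,\al^{(\sm(p))})$ directly and then invokes $^{t}\rho(P_{\sm})=\rho(P_{\sm})^{-1}$ to pass to~(\ref{eq:action-5}). Your caution about the indices is warranted: as written you have $h'^{(j)}=h^{(\sm^{-1}(j))}$ and $(\al\cdot{}^{t}\rho(P_{\sm}))^{(k)}=\al^{(\sm(k))}$, whereas with the paper's convention $P_{\sm}=(\de_{i,\sm(j)}1_{rl})$ one gets $h'^{(j)}=h^{(\sm(j))}$ and $(\al\cdot{}^{t}\rho(P_{\sm}))^{(k)}=\al^{(\sm^{-1}(k))}$ --- your two inversions happen to cancel, but the small-case check you propose would sort this out.
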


\begin{proof}
Let $g\in\cP$ be the block permutation matrix $P_{\sm}=(\delta_{i,\sigma(j)}\cdot1_{rl})$
corresponding to $\sm\in\S_{p}$. Then $\rho(g)=(\delta_{i,\sigma(j)}\cdot1_{l})\in\GL n$
is also a block permutation matrix for $\sm\in\S_{p}$. For $h=\diag(h^{(1)},\dots,h^{(p)})\in H_{\lm},h^{(j)}\in J_{r}(l)$
and for $\al=(\al^{(1)},\dots,\al^{(p)})\in\C^{n},\al^{(j)}\in\C^{l}$,
we have 
\begin{align*}
\iota(h)g & =(h^{(1)},\dots,h^{(p)})P_{\sm}=(h^{(\sm(1))},\dots,h^{(\sm(p))}),\\
\al\cdot\rho(g) & =(\al^{(1)},\dots,\al^{(p)})\rho(g)=(\al^{(\sm(1))},\dots,\al^{(\sm(p))}).
\end{align*}
Then 
\begin{align}
\chi_{\lm}(\iota^{-1}(\iota(h)g);\al\cdot\rho(g)) & =\prod_{1\leq k\leq p}\chi_{l}(h^{(\sm(k))};\al^{(\sm(k))})\nonumber \\
 & =\prod_{1\leq k\leq p}\chi_{l}(h^{(k)};\al^{(k)})\label{eq:action-9}\\
 & =\chi_{\lm}(h;\al).\nonumber 
\end{align}
Since $\rho(g)$ is a permutation matrix and hence in particular an
orthogonal matrix, we have $^{t}\rho(g)=\rho(g)^{-1}$. Then the desired
identity (\ref{eq:action-5}) follows from (\ref{eq:action-9}). 
\end{proof}
Now the proof of Proposition \ref{prop:act-2} is already completed
since any element of $W_{\lm}$ is a product of those of $U$ and
$\cP$.

\section{Examples}

In this section, we consider the Radon HGF corresponding to classical
HGFs and explain what Theorem \ref{thm:main-2} implies when it is
applied to the examples. As classical HGFs, we consider here the beta
and gamma functions and the Gaussian integral as the first group,
the second group is the Gauss HGF and its confluent family, namely
Kummer's confluent HGF, Bessel function, Hermite-Weber function and
Airy function. For these classical HGFs, we know their Hermitian matrix
integral analogues. As is explained in \cite{kimura-2}, they can
be understood as particular cases of the Radon HGF. We will apply
Theorem \ref{thm:main-2} to these Hermitian matrix integrals. 

To make a link between the Hermitian matrix integral analogues and
the Radon HGF, we consider the Radon HGF of type $\lm$ in the case
$m=2r,N=nr$ and introduce a Zariski open subset $Z_{\lm}\subset\mat'(2r,nr)$
assuming some additional condition on $z\in\mat'(2r,nr)$, which is
considered as the space of independent variables for the Radon HGF.
Note that $n\geq3$ since $N>m$ by assumption. 

Let a partition $\lm=(n_{1},\dots,n_{\ell})$ of $n$ be given. Note
that a partition is identified with a Young diagram. We say that $\mu=(m_{1},\dots,m_{\ell})\in\Z_{\geq0}^{\ell}$
is a subdiagram of $\lm$ if it satisfies $0\leq m_{k}\leq n_{k}\quad(\forall k)$
and we write $\mu\subset\lm$. The weight of $\mu$ is defined by
$|\mu|:=m_{1}+\cdots+m_{\ell}$. Let $\mu$ be such that $\mu\subset\lm$
and $|\mu|=2$. Then $\mu$ has the form either 
\begin{equation}
\mu=(0,\dots,0,\overset{i}{1},0,\dots,0,\overset{j}{1},0,\dots,0)\;\text{or }\mu=(0,\dots,0,\overset{i}{2},0,\dots,0).\label{eq:exa-1}
\end{equation}
The first case means that $m_{i}=m_{j}=1$ and $m_{k}=0$ for $k\neq i,j$,
and the second case means that $m_{j}=2$ and $m_{k}=0$ for $k\neq j$.
Using this notation we define a Zariski open subset $Z_{\lm}\subset\mat'(2r,nr)$
as follows. According as $\lm=(n_{1},\dots,n_{\ell})$, write $z\in\mat'(2r,nr)$
as
\[
z=(z^{(1)},\dots,z^{(\ell)}),\quad z^{(j)}=(z_{0}^{(j)},\dots,z_{n_{j}-1}^{(j)}),\quad z_{k}^{(j)}\in\mat(2r,r).
\]
Take a subdiagram $\mu\subset\lm$ with $|\mu|=2$. Then according
as the form of $\mu$ given in (\ref{eq:exa-1}), we put
\[
z_{\mu}=(z_{0}^{(i)},z_{0}^{(j)})\;\text{or }z_{\mu}=(z_{0}^{(i)},z_{1}^{(i)}),
\]
respectively. Then $Z_{\lm}$ is defined as 
\[
Z_{\lm}:=\{z\in\mat(2r,nr)\mid\det z_{\mu}\neq0\;\text{for any}\;\mu\subset\lm,|\mu|=2\}.
\]
It is easily seen that $Z_{\lm}$ is invariant by the action $\GL{2r}\curvearrowright\mat'(2r,nr)\curvearrowleft H_{\lm}$.
Taking into account the covariance property for the Radon HGF with
respect to the action of $\GL{2r}\times H_{\lm}$ given in Proposition
\ref{prop:covariance}, we try to take the independent variable $z$
to a simpler form $\bx\in Z_{\lm}$ which gives a representative of
the orbit $O(z)$ of $z$.

\subsection{Analogues of the beta and gamma functions}

The classical beta and gamma functions are defined as 

\begin{align*}
B(a,b) & =\int_{0<u<1}u^{a-1}(1-u)^{b-1}du,\\
\G(a) & =\int_{u>0}e^{-u}u^{a-1}du
\end{align*}
and their Hermitian matrix integral analogues are 
\begin{align}
B_{r}(a,b) & =\int_{0<U<1}|U|^{a-r}|1_{r}-U|^{b-r}dU,\label{eq:exa-2}\\
\G_{r}(a) & =\int_{U>0}\etr(-U)|U|^{a-r}dU,\label{eq:exa-3}
\end{align}
where $U$ is an integration variable belonging to the set $\herm$
of Hermitian matrices of size $r$, $|U|:=\det U$, $\etr(U)=\exp(\Tr\,U)$,
$U>0$ and $1_{r}-U>0$ mean that the Hermitian matrix $U$ and $1_{r}-U$
are positive definite, respectively, and $dU$ is the Euclidean volume
form 
\begin{equation}
dU=dU_{1,1}\wedge\cdots\wedge dU_{r,r}\bigwedge_{i<j}d(\re U_{i,j})\wedge d(\im U_{i,j}).\label{eq:exa-4}
\end{equation}

To understand these matrix integrals as particular cases of the Radon
HGF, we consider the Radon HGF in the case where the partitions $\lm$
are $(1,1,1),(2,1)$ and $(3)$ of the weight $3$ and the space $Z_{\lm}$
of independent variables is a subset of $\mat(2r,3r)$. So we consider
the following subgroup $\hlam$ of $\GL{3r}$:

\begin{align*}
H_{(1,1,1)} & =\left\{ \left(\begin{array}{ccc}
h_{0}\\
 & h_{1}\\
 &  & h_{2}
\end{array}\right)\right\} ,\;H_{(2,1)}=\left\{ \left(\begin{array}{ccc}
h_{0} & h_{1}\\
 & h_{0}\\
 &  & h_{2}
\end{array}\right)\right\} ,\\
H_{(3)} & =\left\{ \left(\begin{array}{ccc}
h_{0} & h_{1} & h_{2}\\
 & h_{0} & h_{1}\\
 &  & h_{0}
\end{array}\right)\right\} ,
\end{align*}
where $h_{k}\in\mat(r)$. In the rest of this section, we use different
notations from that used in Sections \ref{subsec:Jordan-group}, \ref{subsec:Char-conf-1-1}
and \ref{subsec:Definition-of-HGF} about indices in order to avoid
unnecessary complexity of notations. We use the same convention for
the parameter $\al$ in the character of the universal covering group
$\tilde{H}_{\lm}$:
\begin{align*}
\chi_{(1,1,1)} & (h;\al)=(\det h_{0})^{\al_{0}}(\det h_{1})^{\al_{1}}(\det h_{2})^{\al_{2}},\\
\chi_{(2,1)} & (h;\al)=(\det h_{0})^{\al_{0}}\etr(\al_{1}h_{0}^{-1}h_{1})(\det h_{2})^{\al_{2}},\\
\chi_{(3)} & (h;\al)=(\det h_{0})^{\al_{0}}\etr\left(\al_{1}h_{0}^{-1}h_{1}+\al_{2}\left(h_{0}^{-1}h_{2}-\frac{1}{2}(h_{0}^{-1}h_{1})^{2}\right)\right).
\end{align*}
The spaces on which the Radon HGFs are defined are 
\begin{align*}
Z_{(1,1,1)} & =\left\{ (z_{0},z_{1},z_{2})\in\mat(2r,3r)\mid\;\det(z_{i},z_{j})\neq0\;(i\ne j)\right\} ,\\
Z_{(2,1)} & =\left\{ (z_{0},z_{1},z_{2})\in\mat(2r,3r)\mid\;\det(z_{0},z_{1})\neq0,\det(z_{0},z_{2})\neq0\right\} ,\\
Z_{(3)} & =\left\{ (z_{0},z_{1},z_{2})\in\mat(2r,3r)\mid\;\det(z_{0},z_{1})\neq0\right\} ,
\end{align*}
where $z_{k}\in\mat(2r,r)$ for $0\leq k\leq2$. To obtain the analogues
of the beta, gamma, Gaussian, we need the normal form of an element
of $Z_{\lm}$ obtained by the action $\GL{2r}\curvearrowright Z_{\lm}\curvearrowleft H_{\lm}$.
The following is Lemma 4.1 of \cite{kimura-2}.
\begin{lem}
\label{lem:ex-1}Let $\lm$ be a partition of $3$. For any $z\in Z_{\lm}$,
we can take a representative $\bx\in Z_{\lm}$ of the orbit $O(z)$
as given in the following table. \bigskip

\begin{tabular}{|c|c|}
\hline 
$\lm$ & normal form $\bx$\tabularnewline
\hline 
\hline 
$(1,1,1)$ & $\left(\begin{array}{ccc}
1_{r} & 0 & 1_{r}\\
0 & 1_{r} & -1_{r}
\end{array}\right)$\tabularnewline
\hline 
$(2,1)$ & $\left(\begin{array}{ccc}
1_{r} & 0 & 0\\
0 & 1_{r} & 1_{r}
\end{array}\right)$\tabularnewline
\hline 
$(3)$ & $\left(\begin{array}{ccc}
1_{r} & 0 & 0\\
0 & 1_{r} & 0
\end{array}\right)$\tabularnewline
\hline 
\end{tabular} \bigskip
\end{lem}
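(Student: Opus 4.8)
The plan is to exhibit, for each partition $\lm$ of $3$, an explicit pair $(g,h)\in\GL{2r}\times H_{\lm}$ carrying a generic $z\in Z_{\lm}$ to the tabulated $\bx$, so that $\bx=gzh$ and hence $[\bx]\in O(z)$. First I would fix $z=(z_{0},z_{1},z_{2})\in Z_{\lm}$ with $z_{k}\in\mat(2r,r)$. In every case the defining condition of $Z_{\lm}$ guarantees that a certain $2r\times2r$ block built from two of the $z_{k}$'s is invertible; multiplying $z$ on the left by the inverse of that block (this is the $\GL{2r}$-action) normalizes those two columns to $\binom{1_{r}}{0}$ and $\binom{0}{1_{r}}$. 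What remains is to clean up the third block by acting on the right with a suitable $h\in H_{\lm}$, using the explicit shape of $H_{\lm}$ recalled just above the lemma.

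Concretely: for $\lm=(1,1,1)$, since $\det(z_{0},z_{1})\neq0$, left-multiply by $(z_{0},z_{1})^{-1}$ to get $(1_{r},0,*)$ in the top row pattern $\binom{1_{r}}{0},\binom{0}{1_{r}},\binom{a}{b}$; here $\det(z_{0},z_{2})\neq0$ and $\det(z_{1},z_{2})\neq0$ force $a,b\in\GL r$, and right-multiplication by $\diag(1_{r},1_{r},h_{0})$ with $h_{0}=b^{-1}$ normalizes the last block to $\binom{ab^{-1}}{1_{r}}$; a further harmless rescaling (or observing that the orbit representative is only determined up to the residual action) brings it to $\binom{1_{r}}{-1_{r}}$ as in the table. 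For $\lm=(2,1)$, after normalizing $(z_{0},z_{1})$ to $\binom{1_{r}}{0},\binom{0}{1_{r}}$, the third block $\binom{a}{b}$ has $b\in\GL r$ because $\det(z_{0},z_{2})\neq0$; right-multiplication by $h=\mathrm{diag}(h_{0},h_{0},h_{2})$ with $h_{0}=b^{-1}$ and the off-diagonal $h_{1}=-a$ inside the $J_{r}(2)$ block kills $a$ (this is exactly the extra freedom a Jordan block affords over a diagonal one) and sets $b\mapsto 1_{r}$. For $\lm=(3)$, $Z_{(3)}$ only requires $\det(z_{0},z_{1})\neq0$, so after the left normalization $z_{2}=\binom{a}{b}$ is arbitrary, and the two off-diagonal parameters $h_{1},h_{2}$ of $J_{r}(3)$ supply exactly the two matrix degrees of freedom needed to send $\binom{a}{b}$ to $\binom{0}{0}$; one checks the relevant map $(h_{1},h_{2})\mapsto$ (new third block) is, at the relevant order in the shift matrix $\La$, an affine surjection.

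The key steps, in order, are: (i) use $\det z_{\mu}\neq0$ for the appropriate $\mu\subset\lm$ with $|\mu|=2$ to pick the invertible $2r\times2r$ sub-block and left-normalize; (ii) read off the residual third block and determine from the remaining conditions in the definition of $Z_{\lm}$ which of its $r\times r$ sub-blocks are invertible; (iii) solve a small linear system for the entries of $h\in H_{\lm}$ — diagonal entries $h_{0}$ for rescaling, off-diagonal $h_{1},h_{2}$ for clearing — to reach $\bx$; (iv) verify $\bx\in Z_{\lm}$ so it is a legitimate representative. I expect the main obstacle to be step (iii) in the $(3)$ case: one must confirm that the two matrix parameters of $J_{r}(3)$ genuinely suffice to annihilate both rows of the third block simultaneously, i.e. that the map sending $(h_{1},h_{2})$ to the transformed block is surjective onto $\mat(2r,r)$ modulo the already-normalized columns; this amounts to an explicit computation with $\sum_{k}h_{k}\otimes\La^{k}$ truncated mod $w^{3}$, and checking the Jacobian is full rank. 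Since this is precisely Lemma 4.1 of \cite{kimura-2}, I would cite that source for the verification and only sketch the normalization above.
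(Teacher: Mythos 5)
The paper gives no proof of this lemma: it is imported verbatim as Lemma 4.1 of \cite{kimura-2}, and since you also end by deferring to that reference, the two treatments have the same logical status. Your reconstruction of the normalization is the natural one and, for $\lm=(1,1,1)$ and $\lm=(3)$, it is essentially correct: after left-normalizing the invertible $2r\times2r$ sub-block guaranteed by the definition of $Z_{\lm}$, the residual stabilizer of the first two columns in $\GL{2r}\times H_{(1,1,1)}$ acts on the third block $\binom{a}{b}$ (with $a,b\in\GL r$ by the remaining conditions) as $\binom{a}{b}\mapsto\binom{h_{0}^{-1}ah_{2}}{h_{1}^{-1}bh_{2}}$, which is transitive on pairs of invertible matrices and so reaches $\binom{1_{r}}{-1_{r}}$; and in the $(3)$ case the map $(h_{1},h_{2})\mapsto z_{0}h_{2}+z_{1}h_{1}+z_{2}$ is an affine bijection onto $\mat(2r,r)$ because $(z_{0},z_{1})\in\GL{2r}$, so your surjectivity worry is easily discharged.

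There is, however, a concrete error in your $(2,1)$ step. In $H_{(2,1)}$ the Jordan factor $J_{r}(2)$ couples the columns $z_{0}$ and $z_{1}$: right multiplication sends $(z_{0},z_{1},z_{2})$ to $(z_{0}h_{0},\,z_{0}h_{1}+z_{1}h_{0},\,z_{2}h_{2})$, so the off-diagonal entry $h_{1}$ modifies only the \emph{second} column and never touches the third block. Setting $h_{1}=-a$ therefore cannot ``kill $a$'' in $\binom{a}{b}$; as written, that step fails. The correct mechanism is the one the paper uses elsewhere (see the proof of Proposition \ref{prop:ex-kummer-1}): first clear $a$ by a left unipotent $g=\left(\begin{smallmatrix}1_{r} & -ab^{-1}\\ 0 & 1_{r}\end{smallmatrix}\right)\in\GL{2r}$ (legitimate since $\det b\neq0$), which perturbs the second column, then use $h_{1}$ to restore that column, and finally rescale with $h_{2}=b^{-1}$. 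A version of the same remark applies to your $(3)$ case: choosing $h_{1}=-b$, $h_{2}=-a$ does annihilate the third column but simultaneously changes the second to $\binom{-b}{1_{r}}$, which must afterwards be repaired by a left unipotent; this is harmless since the third column is already zero. With these two corrections your sketch is a complete and correct proof, independent of the citation.
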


It follows from Lemma \ref{lem:ex-1} that the quotient space $X_{\lm}:=\GL{2r}\backslash Z_{\lm}/H_{\lm}$
consists of one point and realized in $Z_{\lm}$ as 
\begin{align*}
X_{(1,1,1)} & =\left\{ \left(\begin{array}{ccc}
1_{r} & 0 & 1_{r}\\
0 & 1_{r} & -1_{r}
\end{array}\right)\right\} \subset Z_{(1,1,1)},\\
X_{(2,1)} & =\left\{ \left(\begin{array}{ccc}
1_{r} & 0 & 0\\
0 & 1_{r} & 1_{r}
\end{array}\right)\right\} \subset Z_{(2,1)}.\\
X_{(3)} & =\left\{ \left(\begin{array}{ccc}
1_{r} & 0 & 0\\
0 & 1_{r} & 0
\end{array}\right)\right\} \subset Z_{(3)}.
\end{align*}
Using the normal form $\bx\in Z_{\lm}$ given in Lemma \ref{lem:ex-1},
Radon HGF on $X_{\lm}$ is given by

\begin{align*}
F_{(1,1,1)}(\bx,\al;C) & =\int_{C}(\det u)^{\al_{1}}(\det(1_{r}-u))^{\al_{2}}du,\\
F_{(2,1)}(\bx,\al;C) & =\int_{C}\etr\left(\al_{1}u\right)(\det u)^{\al_{2}}du,\\
F_{(3)}(\bx,\al;C) & =\int_{C}\etr\left(\al_{1}u+\al_{2}(-\frac{1}{2}u^{2})\right)du,
\end{align*}
where the integration variable is $u=(u_{i,j})_{1\leq i,j\leq r}$
and $du=\wedge_{i<j}du_{i,j}$. Here the parameters must satisfy the
conditions by definition:
\begin{align*}
\al_{0}+\al_{1}+\al_{2}=-2r & \quad\text{for }\lm=(1,1,1),\\
\al_{0}+\al_{2}=-2r,\;\al_{1}\neq0 & \quad\text{for }\lm=(2,1),\\
\al_{0}=-2r,\;\al_{2}\neq0 & \quad\text{for }\lm=(3).
\end{align*}
Proposition \ref{prop:act-3} tells us that, in the case $\lm=(2,1)$,
we can take $\al_{1}$ to any nonzero complex number, and in the case
$\lm=(3)$, we can take $\al_{1}$ to any complex number and $\al_{2}$
to any nonzero complex number as the effect of action on the Radon
HGF of the continuous part of Weyl group $W_{\lm}$ for $H_{\lm}$.
So we can take the parameter $\al$ so that $\al_{1}=-1$ in case
$\lm=(2,1)$, and $\al_{1}=0,\al_{2}=1$ in case $\lm=(3)$. After
this normalization of the parameter, we have the integrals 
\begin{align*}
F_{(1,1,1)}(\bx,\al;C_{1}) & =\int_{C_{1}}(\det u)^{\al_{1}}(\det(1_{r}-u))^{\al_{2}}du,\\
F_{(2,1)}(\bx,\al;C_{2}) & =\int_{C_{2}}\etr\left(-u\right)(\det u)^{\al_{2}}du,\\
F_{(3)}(\bx,\al;C_{3}) & =\int_{C_{3}}\etr\left(-\frac{1}{2}u^{2}\right)du,
\end{align*}
where the domain of integration $C_{i}$ is a cycle in the homology
group discussed in Section \ref{subsec:Definition-of-HGF}. Note that
the volume form $dU$ in $\herm$ can be written as 
\[
dU=\left(\frac{\sqrt{-1}}{2}\right)^{r(r-1)/2}dU_{1,1}\wedge\cdots\wedge dU_{r,r}\bigwedge_{i\neq j}(dU_{i,j}\wedge dU_{j,i})
\]
and that $\herm$ can be considered as a real form of $\mat(r)$ in
the sense that any $u\in\mat(r)$ can be expressed uniquely as $u=U_{1}+\sqrt{-1}U_{2}$
with $U_{1},U_{2}\in\herm$. So if one restrict $du$ to $\herm$,
$du$ and $dU$ coincide modulo multiplicative constant factor. In
this sense, for the Radon HGF for $\lm=(1,1,1),(2,1),(3)$, the domains
of integration $C_{i}$ are taken in the space $\herm$ as 
\begin{align*}
C_{1} & =\{u\in\herm\mid u>0,1_{r}-u>0\},\\
C_{2} & =\{u\in\herm\mid u>0\},\\
C_{3} & =\herm.
\end{align*}
The integral $F_{(1,1,1)}(\bx,\al;C_{1})$ with $\al=(-a-b,a-r,b-r)$
coincides with $B_{r}(a,b)$ and $F_{(2,1)}(\bx,\al;C_{2})$ with
$\al=(-a-r,-1,a-r)$ coincides with $\G_{r}(a)$ modulo constant factor
$(\sqrt{-1}/2)^{r(r-1)/2}$, respectively. 

Next we explain the effect of finite group part of the Weyl group
stated in Theorem \ref{thm:main-2} when applied to the case $\lm=(1,1,1)$.

\begin{prop}
We have the identity $B_{r}(a,b)=B_{r}(b,a)$.
\end{prop}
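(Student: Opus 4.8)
The plan is to exhibit $B_{r}(a,b)=B_{r}(b,a)$ as the action of a single transposition lying in the finite part $\cP$ of the Weyl group $W_{(1,1,1)}$. First I would pin down that group: in the notation (\ref{eq:weyl-1}) the partition $(1,1,1)$ consists of the single part $n_{1}=1$ with multiplicity $p_{1}=3$, and since $W_{r}(1)=\{1_{r}\}$, Proposition \ref{prop:weyl-group} gives $W_{(1,1,1)}\simeq\cP_{1}\simeq\S_{3}$; an element $P_{\sm}\in\cP_{1}$ acts on $Z_{(1,1,1)}$ by $z=(z_{0},z_{1},z_{2})\mapsto zP_{\sm}$, which simply permutes the three $\mat(2r,r)$-blocks. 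I would also recall from the computation above that, with $\bx$ the normal form of Lemma \ref{lem:ex-1} and $C_{1}=\{u\in\herm\mid u>0,\ 1_{r}-u>0\}$,
\[
F_{(1,1,1)}(\bx,\al;C_{1})=B_{r}(a,b),\qquad\al=(-a-b,\,a-r,\,b-r),
\]
up to the fixed factor $(\sqrt{-1}/2)^{r(r-1)/2}$, which is the same for all parameters and plays no role below.

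Next I would take $\sm\in\S_{3}$ to be the transposition exchanging the blocks $z_{1}$ and $z_{2}$, so that $\sm(\al)=(-a-b,\,b-r,\,a-r)$ and $F_{(1,1,1)}(\bx,\sm(\al);C_{1})=B_{r}(b,a)$ in the same normalization. Assertion (2) of Theorem \ref{thm:main-2} applied to $z=\bx$ gives
\[
F_{(1,1,1)}(\bx P_{\sm},\,\sm(\al);\,C)=F_{(1,1,1)}(\bx,\al;C),
\]
and one may keep the same cycle $C=C_{1}$ on both sides, because a block permutation merely permutes the three branch hypersurfaces $\{[t]\mid\det(tz_{k})=0\}$, $k=0,1,2$, so that $X_{\bx P_{\sm}}=X_{\bx}$. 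Thus the right side equals $B_{r}(a,b)$, and it remains to identify the left side with $B_{r}(b,a)$.

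To do so I would carry $\bx P_{\sm}$ back to the normal form by the $\GL{2r}$-action. A direct check gives
\[
\bx P_{\sm}=g\,\bx,\qquad g=\begin{pmatrix}1_{r}&1_{r}\\0&-1_{r}\end{pmatrix}\in\GL{2r},\qquad g^{2}=1_{2r},\quad\det g=(-1)^{r},
\]
with trivial $H_{\lm}$-part. By the covariance Proposition \ref{prop:covariance}(1),
\[
F_{(1,1,1)}(\bx P_{\sm},\sm(\al);C_{1})=\det(g)^{-r}\,F_{(1,1,1)}(\bx,\sm(\al);\tilde{C}_{1}),
\]
where $\tilde{C}_{1}$ is $C_{1}$ carried over by the automorphism of $X_{\bx}$ induced by $g$. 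On the affine chart with coordinate $u$ that automorphism is the involution $u\mapsto1_{r}-u$: it sends the region $C_{1}$ onto itself and multiplies the orientation by its Jacobian $(-1)^{r^{2}}$, so $\tilde{C}_{1}=(-1)^{r^{2}}C_{1}$. Since $\det(g)^{-r}=(-1)^{r^{2}}$ as well, the two signs cancel, the left side equals $F_{(1,1,1)}(\bx,\sm(\al);C_{1})=B_{r}(b,a)$, and comparison with the previous display gives $B_{r}(a,b)=B_{r}(b,a)$.

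The step I expect to be the main obstacle is precisely this matching of cycle and constant: Theorem \ref{thm:main-2}(2) taken alone is vacuous at the symmetric point $\bx$, since there the left side also evaluates on the chart to $B_{r}(a,b)$; all the content sits in transporting $\bx P_{\sm}$ back to $\bx$ and observing that the explicit multiplier $\det(g)^{-r}$ produced by the covariance formula — exactly what the definition (\ref{eq:radon-2}) of $\tau(t)$ was built to give — is the Jacobian of the hidden substitution $u\mapsto1_{r}-u$, so that $B_{r}$, being a genuine measure integral over a region symmetric under $u\leftrightarrow1_{r}-u$, inherits the symmetry with no residual sign.
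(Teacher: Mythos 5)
Your argument is correct and is essentially the paper's own proof: the same transposition swapping the second and third blocks, the same normalizing matrix $g=\left(\begin{smallmatrix}1_{r}&1_{r}\\0&-1_{r}\end{smallmatrix}\right)$, the same use of Proposition \ref{prop:covariance}(1), and the same cancellation of the factor $\det(g)^{-r}$ against the orientation change of $C_{1}$ under $u\mapsto1_{r}-u$. The only cosmetic difference is that you compute that orientation factor as the Jacobian $(-1)^{r^{2}}$ on $\herm$ while the paper reduces to the eigenvalue cycle and gets $(-1)^{r}$; these agree since $r^{2}\equiv r\pmod 2$.
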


\begin{proof}
For $F=F_{(1,1,1)}(\bx,\al;C_{1})$, we apply Theorem \ref{thm:main-2}
(2) with
\[
P_{\sm}=\left(\begin{array}{ccc}
1_{r}\\
 &  & 1_{r}\\
 & 1_{r}
\end{array}\right)\in\cP\simeq\S_{3}
\]
 corresponding the permutation ($\sm(1),\sm(2),\sm(3))=(1,3,2)$ and
obtain
\begin{equation}
F(\bx',\al',C_{1})=F(\bx,\al,C_{1})\label{eq:exa-5}
\end{equation}
 with 
\[
\bx'=\bx P_{\sm}=\left(\begin{array}{ccc}
1_{r} & 1_{r} & 0\\
0 & -1_{r} & 1_{r}
\end{array}\right),\quad\al'=\sm(\al)=(\al_{0},\al_{2},\al_{1}).
\]
We normalize $\bx'$ to $\bx$ by the action of $\GL{2r}$. Put $g=\left(\begin{array}{cc}
1_{r} & 1_{r}\\
0 & -1_{r}
\end{array}\right)$, then we have $g\bx'=\bx$. By applying Proposition \ref{prop:covariance}
to $F(\bx',\al',C_{1})$, we have 
\begin{align}
F(\bx',\al',C_{1}) & =F(g^{-1}\bx,\al',C_{1})=(\det g)^{r}F(\bx,\al',C_{1}')\nonumber \\
 & =(-1)^{r^{2}}F(\bx,\al',C_{1}')=(-1)^{r^{2}}\int_{C_{1}'}(\det u)^{\al_{2}}(\det(1_{r}-u))^{\al_{1}}du,\label{eq:exa-6}
\end{align}
where $C_{1}'$ is the cycle obtained from $C_{1}$ as the image by
the map $\herm\ni u\mapsto1_{r}-u\in\herm$. So $C_{1}$ and $C_{1}'$
is the same as a set but the orientation is different. Noting that
the integral can be reduced to the integral on the variables of eigenvalues
$u_{1},\dots,u_{r}$ of $u$, the cycle $C_{1}$ corresponds to the
cycle in the eigenvalue space given by $(\overrightarrow{0,1})^{r}$
whereas $C_{1}'$ corresponds to $(\overrightarrow{1,0})^{r}$. Hence
we have 
\begin{align}
F(\bx,\al',C_{1}') & =(-1)^{r}\int_{C_{1}}(\det u)^{\al_{2}}(\det(1_{r}-u))^{\al_{1}}du\nonumber \\
 & =(-1)^{r}F(\bx,\al',C_{1}).\label{eq:exa-7}
\end{align}
Thus from (\ref{eq:exa-5}), (\ref{eq:exa-6}) and (\ref{eq:exa-7})
we have $F(\bx',\al',C_{1})=F(\bx,\al,C_{1}),$which implies the identity
$B_{r}(a,b)=B_{r}(b,a)$.
\end{proof}
\begin{rem}
It is desirable to carry out the similar consideration for the other
elements $P_{\sm}$ of $\cP$. But the homology theory associated
with the matrix integral is not sufficiently developed, this is subject
to be considered in the future.
\end{rem}

\subsection{\label{subsec:exa-gauss}Analogues of the Gauss HGF family }

As the classical HGF of one variable, we consider the Gauss HGF and
its confluent family, namely Kummer's confluent HGF, Bessel function,
Hermite-Weber function and Airy function. They are 

\begin{align*}
\text{Gauss: } & \int_{C}u^{a-1}(1-u)^{c-a-1}(1-xu)^{-b}du,\\
\text{Kummer: } & \int_{C}e^{xu}u^{a-1}(1-u)^{c-a-1}du,\\
\text{Bessel: } & \int_{C}e^{u-\frac{x}{u}}u^{-c-1}dt=\int_{C'}e^{xu-\frac{1}{u}}u^{c-1}du,\\
\text{Hermite-Weber: } & \int_{C}e^{xu-\frac{1}{2}u^{2}}u^{-a-1}du,\\
\text{Airy: } & \int_{C}e^{xu-u^{3}/3}du.
\end{align*}
Each of the functions satisfies the 2nd order differential equation
on the complex $x$-plane. For example, the differential equation
for the Gauss HGF and Kummer's confluent HGF are 
\begin{gather}
x(1-x)y''+\{c-(a+b+1)x\}y'-aby=0,\quad'=d/dx,\label{eq:exa-7-1}\\
xy''+(c-x)y'-ay=0,\label{eq:exa-7-2}
\end{gather}
which are called the Gauss hypergeometric equation (Gauss HGE) and
Kummer's confluent hypergeometric equation (Kummer's CHGE), respectively.
Any solution of the differential equations can be represented by the
corresponding integral by choosing an appropriate path of integration
$C$. For example, 
\begin{align*}
\,_{2}F_{1}(a,b,c;x) & =\frac{\G(c)}{\G(a)\G(c-a)}\int_{C}u^{a-1}(1-u)^{c-a-1}(1-xu)^{-b}du,\\
\,_{1}F_{1}(a,c;x) & =\frac{\G(c)}{\G(a)\G(c-a)}\int_{C}e^{xu}u^{a-1}(1-u)^{c-a-1}du
\end{align*}
with the path of integration $C=\overrightarrow{0,1}$, which starts
from $u=0$ and ends at $u=1$, give the holomorphic solutions of
the Gauss HGE and Kummer's CHGE at $x=0$ which take the value $1$
there, respectively.

Hermitian matrix integral analogues of the above family of Gauss HGF
are also considered \cite{Faraut,inamasu-ki,kimura-1,Kontsevich}:
\begin{align}
\text{Gauss: } & \int_{C}|U|^{a-r}|I-U|^{c-a-r}|I-UX|^{-b}\,dU,\nonumber \\
\text{Kummer: } & \int_{C}|U|^{a-r}|I-U|^{c-a-r}\etr(UX)\,dU,\nonumber \\
\text{Bessel: } & \int_{C}|U|^{c-r}\etr(UX-U^{-1})\,dU,\label{eq:exa-7-3}\\
\text{Hermite-Weber: } & \int_{C}|U|^{-c-r}\etr\left(UX-\frac{1}{2}U^{2}\right)\,dU,\nonumber \\
\text{Airy: } & \int_{C}\etr\left(UX-\frac{1}{3}U^{3}\right)\,dU,\nonumber 
\end{align}
where $X\in\herm$, $|U|:=\det U$, $\etr(U):=\exp(\Tr\,U)$ and $dU$
is the volume form on $\herm$ given in (\ref{eq:exa-4}). It is known
that they are functions of eigenvalues $x_{1},\dots,x_{r}$ of $X$
and satisfy holonomic systems of rank $2^{r}$ \cite{kimura-1}. The
Hermitian matrix integral analogue of $\,_{2}F_{1}(a,b,c;x)$ and
$\,_{1}F_{1}(a,c;x)$ are given by 
\begin{align}
\,_{2}\cF_{1}(a,b,c;X) & =\frac{\G_{r}(c)}{\G_{r}(a)\G_{r}(c-a)}\int_{0<U<1_{r}}|U|^{a-r}|I-U|^{c-a-r}|I-UX|^{-b}dU,\label{eq:exa-8}\\
\,_{1}\cF_{1}(a,c;X) & =\frac{\G_{r}(c)}{\G_{r}(a)\G_{r}(c-a)}\int_{0<U<1_{r}}|U|^{a-r}|I-U|^{c-a-r}\etr(UX)\,dU,\label{eq:exa-9}
\end{align}
respectively. To understand these Hermitian matrix integral analogues
as the Radon HGF, we consider the Radon HGFs of type $\lm$, where
$\lm$ is a partition of weight $4$: $(1,1,1,1),(2,1,1),(2,2),(3,1)$
and $(4)$, and the space of independent variables $Z_{\lm}$ is a
subset of $\mat(2r,4r)$. So we consider the following subgroup $\hlam$
of $\GL{4r}$:

\begin{align*}
H_{(1,1,1,1)} & =\left\{ \begin{pmatrix}h_{0}\\
 & h_{1}\\
 &  & h_{2}\\
 &  &  & h_{3}
\end{pmatrix}\right\} , & H_{(2,1,1)} & =\left\{ \begin{pmatrix}h_{0} & h_{1}\\
 & h_{0}\\
 &  & h_{2}\\
 &  &  & h_{3}
\end{pmatrix}\right\} ,\\
H_{(2,2)} & =\left\{ \begin{pmatrix}h_{0} & h_{1}\\
 & h_{0}\\
 &  & h_{2} & h_{3}\\
 &  &  & h_{2}
\end{pmatrix}\right\} , & H_{(3,1)} & =\left\{ \begin{pmatrix}h_{0} & h_{1} & h_{2}\\
 & h_{0} & h_{1}\\
 &  & h_{0}\\
 &  &  & h_{3}
\end{pmatrix}\right\} ,\\
H_{(4)} & =\left\{ \begin{pmatrix}h_{0} & h_{1} & h_{2} & h_{3}\\
 & h_{0} & h_{1} & h_{2}\\
 &  & h_{0} & h_{1}\\
 &  &  & h_{0}
\end{pmatrix}\right\} ,
\end{align*}
 where $h_{k}\in\mat(r)$. A character of the universal covering group
$\tilde{H}_{\lm}$ is given by 
\begin{align*}
\chi_{(1,1,1,1)} & =(\det h_{0})^{\al_{0}}(\det h_{1})^{\al_{1}}(\det h_{2})^{\al_{2}}(\det h_{3})^{\al_{3}},\\
\chi_{(2,1,1)} & =(\det h_{0})^{\al_{0}}\etr(\al_{1}\cdot h_{0}^{-1}h_{1})(\det h_{2})^{\al_{2}}(\det h_{3})^{\al_{3}},\\
\chi_{(2,2)} & =(\det h_{0})^{\al_{0}}\etr(\al_{1}\cdot h_{0}^{-1}h_{1})(\det h_{2})^{\al_{2}}\etr(\al_{3}\cdot h_{2}^{-1}h_{3}),\\
\chi_{(3,1)} & =(\det h_{0})^{\al_{0}}\etr\left(\al_{1}\cdot h_{0}^{-1}h_{1}+\al_{2}\left(h_{0}^{-1}h_{2}-\frac{1}{2}(h_{0}^{-1}h_{1})^{2}\right)\right)(\det h_{3})^{\al_{3}},\\
\chi_{(4)} & =(\det h_{0})^{\al_{0}}\etr\left\{ \al_{1}\cdot h_{0}^{-1}h_{1}+\al_{2}\left(h_{0}^{-1}h_{2}-\frac{1}{2}(h_{0}^{-1}h_{1})^{2}\right)\right.\\
 & \qquad+\left.\al_{3}\left(h_{0}^{-1}h_{3}-(h_{0}^{-1}h_{1})(h_{0}^{-1}h_{2})+\frac{1}{3}(h_{0}^{-1}h_{1})^{3}\right)\right\} .
\end{align*}
By Assumption \ref{assu:Radon-conf-4-1}, the parameter $\al$ satisfies

\begin{align*}
\al_{0}+\al_{1}+\al_{2}+\al_{3}=-2r & \quad\text{for }\lm=(1,1,1,1),\\
\al_{0}+\al_{2}+\al_{3}=-2r,\;\al_{1}\neq0 & \quad\text{for }\lm=(2,1,1),\\
\al_{0}+\al_{2}=-2r,\;\al_{1},\al_{3}\neq0 & \quad\text{for }\lm=(2,2),\\
\al_{0}+\al_{3}=-2r,\;\al_{2}\neq0 & \quad\text{for }\lm=(3,1).\\
\al_{0}=-2r,\;\al_{3}\neq0 & \quad\text{for }\lm=(4).
\end{align*}
The space $Z_{\lm}$ is 
\begin{align*}
Z_{(1,1,1,1)} & =\{(z_{0},z_{1},z_{2},z_{3})\in\mat(2r,4r)\ \mid\ \;\:\det(z_{i},z_{j})\neq0\ (i\neq j)\},\\
Z_{(2,1,1)} & =\left\{ (z_{0},z_{1},z_{2},z_{3})\in\mat(2r,4r)\ \mid\ \begin{aligned} & \det(z_{0},z_{j})\neq0\ (1\leq j\leq3)\\
 & \det(z_{2},z_{3})\neq0
\end{aligned}
\right\} ,\\
Z_{(2,2)} & =\left\{ (z_{0},z_{1},z_{2},z_{3})\in\mat(2r,4r)\ \mid\ \begin{aligned} & \det(z_{0},z_{j})\neq0\ (1\leq j\leq2)\\
 & \det(z_{2},z_{3})\neq0
\end{aligned}
\right\} ,\\
Z_{(3,1)} & =\left\{ (z_{0},z_{1},z_{2},z_{3})\in\mat(2r,4r)\ \mid\ \begin{aligned} & \det(z_{0},z_{1})\neq0\\
 & \det(z_{0},z_{3})\neq0
\end{aligned}
\right\} ,\\
Z_{(4)} & =\left\{ (z_{0},z_{1},z_{2},z_{3})\in\mat(2r,4r)\ \mid\ \;\:\det(z_{0},z_{1})\neq0\right\} ,
\end{align*}
where $z_{k}\in\mat(2r,r)$. It is seen that $Z_{\lm}$ is invariant
by the action of $\GL{2r}\times H_{\lm}$. We will see that these
partitions correspond to the Radon HGF analogues of the Gauss, Kummer,
Bessel, Hermite-Weber and Airy functions, respectively. To see this,
we need to normalize $z\in Z_{\lm}$ by the action of $\GL{2r}\times H_{\lm}$
and to normalize the parameter $\al=(\al_{0},\al_{1},\al_{2},\al_{3})$
for $\chi_{\lm}(\cdot,\al)$ using the continuous part $U$ of the
Weyl group $W_{\lm}$.
\begin{prop}
\label{prop:ex-2}For partitions $\lm$ of $4$, we have the following.

(1) Any element $z\in Z_{\lm}$ is taken to the form $\bx$ by the
action of $\GL{2r}\times H_{\lm}$ as given in the table below. 

(2) The parameter $\al=(\al_{0},\al_{1},\al_{2},\al_{3})$ is taken
to the normal form by the action of continuous part of the Weyl group
$W_{\lm}$ as given in the table below.\bigskip

\begin{tabular}{|c|c|c|c|}
\hline 
$\lm$ & Normal form $\bx$ & Normal form of $\al$ & Condition for $\al$\tabularnewline
\hline 
\hline 
$1,1,1,1$ & $\left(\begin{array}{cccc}
1_{r} & 0 & 1_{r} & 1_{r}\\
0 & 1_{r} & -1_{r} & -x
\end{array}\right)$ & $\al_{0},\al_{1},\al_{2},\al_{3}$ & $\al_{0}+\al_{1}+\al_{2}+\al_{3}=-2r$\tabularnewline
\hline 
$2,1,1$ & $\left(\begin{array}{cccc}
1_{r} & 0 & 0 & 1_{r}\\
0 & x & 1_{r} & -1_{r}
\end{array}\right)$ & $\al_{0},1,\al_{2},\al_{3}$ & $\al_{0}+\al_{2}+\al_{3}=-2r$\tabularnewline
\hline 
$2,2$ & $\left(\begin{array}{cccc}
1_{r} & 0 & 0 & -1_{r}\\
0 & x & 1_{r} & 0
\end{array}\right)$ & $\al_{0},1,\al_{2},1$ & $\al_{0}+\al_{2}=-2r$\tabularnewline
\hline 
$3,1$ & $\left(\begin{array}{cccc}
1_{r} & 0 & 0 & 0\\
0 & 1_{r} & x & 1_{r}
\end{array}\right)$ & $\al_{0},0,1,\al_{3}$ & $\al_{0}+\al_{3}=-2r$\tabularnewline
\hline 
$4$ & $\left(\begin{array}{cccc}
1_{r} & 0 & 0 & 0\\
0 & 1_{r} & 0 & -x
\end{array}\right)$ & $-2r,0,0,1$ & \tabularnewline
\hline 
\end{tabular}
\end{prop}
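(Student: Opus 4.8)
\emph{Proof proposal.}
The statement is the conjunction of two independent claims: (1) every $z\in Z_\lm$ lies in the $\GL{2r}\times H_\lm$-orbit of the matrix $\bx$ in the table, and (2) every admissible parameter $\al$ can be carried by $U=\prod_{1\le i\le s}W_r(n_i)^{p_i}$ (acting through $\rho$) to one of the form shown in the table. I would prove these separately. For the later application to the integrals the two are combined: by Proposition \ref{prop:covariance} the action of $\GL{2r}\times H_\lm$ leaves $\al$ unchanged, so one may first normalise $\al$ by $U$ --- which by Proposition \ref{prop:act-3} and Lemma \ref{lem:act-1} moves $z$ to some $z'\in Z_\lm$ --- and afterwards normalise $z'$; the two steps do not interfere.

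For part (2): by Theorem \ref{thm:weyl-main-1} the group $U$ acts on $\C^n$ blockwise, one factor $W_r(n_i)$ per Jordan cell, so it suffices to treat a single cell, which is exactly Proposition \ref{prop:act-3}: if $\al_{n_i-1}^{(i,k)}\ne0$ (Assumption \ref{assu:Radon-conf-4-1}(ii)) there is $g\in W_r(n_i)$ carrying $(\al_0^{(i,k)},\dots,\al_{n_i-1}^{(i,k)})$ to $(\al_0^{(i,k)},0,\dots,0,1)$. Reading this off cell by cell yields the ``Normal form of $\al$'' column (for $\lm=(1,1,1,1)$, where $U$ is trivial, nothing happens; for $\lm=(4)$ one adds Assumption \ref{assu:Radon-conf-4-1}(iii), which here reads $\al_0=-2r$). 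The one point to check is that $U$ preserves $Z_\lm$; this is immediate from (\ref{eq:weyl-5}), since an element of $W_r(p)$ acts on the blocks $(z_0,\dots,z_{p-1})$ of a cell by a block-upper-triangular substitution with $z_j\mapsto c_1^{\,j}z_j+(\text{lower terms})$ and $c_1\ne0$, so each determinant $\det z_\mu$ ($|\mu|=2$) defining $Z_\lm$ is merely rescaled by a nonzero constant (indeed, by $1$ or by $c_1^{\,r}$).

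For part (1): this is a linear-algebra normalisation carried out separately for each of the five partitions, in the spirit of Lemma \ref{lem:ex-1}. The scheme is uniform. First, from the inequalities defining $Z_\lm$ one records which $r\times r$ blocks of $z$ are invertible. Secondly, one picks the pair of $\mat(2r,r)$-blocks that a defining inequality forces to span $\C^{2r}$ (always a pair containing a ``$z_0$-type'' block) and brings it to the standard frame $\left(\begin{smallmatrix}1_r&0\\0&1_r\end{smallmatrix}\right)$ by a left $\GL{2r}$ transformation together with, if needed, a right $\GL r$ rescaling of those two blocks. Thirdly, one normalises the remaining invertible-scaling (``$h_0$-type'') blocks one at a time: the key point is that the stabiliser of the frame inside $\GL{2r}\times H_\lm$ is in general \emph{not} block-diagonal --- in the confluent cases $\lm\ne(1,1,1,1)$ it acquires Jordan-shaped off-diagonal parts coming from the unipotent subgroups $\jro(n_i)$ via the decomposition $J_r(p)=\GL r\ltimes\jro(p)$ of Lemma \ref{lem:radon-conf-1} --- and it is precisely this extra freedom, used together with the unipotent right parameters $h_i$ ($i\ge1$) which act on the ``tail'' blocks by the additive substitutions $z_j\mapsto z_jh_0+z_{j-1}h_1+\cdots$, that lets one clear the ``top halves'' of $z_1,z_2,\dots$. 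What survives after all this is a single $\mat(r)$-valued block, which is the entry $x$ of the table, well defined up to $\GL r$-conjugacy --- harmless here, and reflecting that the resulting integrals depend only on the eigenvalues of $x$.

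The step I expect to be the main obstacle is the third one: tracking the residual stabilisers correctly. It is tempting but wrong to take them block-diagonal; one must carry the Jordan-shaped off-diagonal parts throughout, and one must also fix the order of operations --- invertible-scaling blocks first, unipotent Jordan parameters last --- since otherwise the procedure stalls against a spurious ``matrix-conjugacy'' obstruction (e.g. for $\lm=(2,1,1)$, if one reduces $z_3$ before $z_1$ one is forced to conjugate two unrelated matrices into each other). Once the right order and the right stabilisers are used, each partition reduces to a short explicit computation, and the extreme case $\lm=(4)$ --- where the single cell $J_r(4)$ ties all four blocks together and everything but one slot for $x$ is normalised away --- is what structurally explains why the Airy analogue carries no free parameter in $\al$.
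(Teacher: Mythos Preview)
Your proposal is correct and, for part (2), takes exactly the same route as the paper: both simply invoke Proposition \ref{prop:act-3} (the paper's entire proof of (2) is the sentence ``assertion (2) is a consequence of Proposition \ref{prop:act-3}''). For part (1) the paper gives no argument at all here --- it cites Lemma 4.3 of \cite{kimura-2} --- whereas you sketch the actual $\GL{2r}\times H_\lm$-reduction that presumably underlies that lemma; your three-step scheme (fix a frame, normalise the remaining $h_0$-type blocks, then use the unipotent $\jro$-freedom to clear the tails) is the natural way to carry out such a computation, and your warning about tracking the non--block-diagonal stabiliser in the confluent cases is exactly the subtlety one has to get right. So there is no genuine difference in approach, only in level of detail: you supply the content the paper outsources to \cite{kimura-2}.
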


\begin{proof}
The assertion (1) was obtained as Lemma 4.3 of \cite{kimura-2}. The
assertion (2) is a consequence of Proposition \ref{prop:act-3}.
\end{proof}
Considering the Radon HGF for the normalized variable $\bx$ and the
parameter $\al$ given in the previous proposition, we have the Radon
HGF which give the Hermitian matrix integral analogue (\ref{eq:exa-7-3})
of the Gauss HGF family as follows.
\begin{prop}
\label{prop:ex-3}Let  $\lm,\bx$  and  $\al$  be as in Proposition
\ref{prop:ex-2}. Then the corresponding Radon HGF $F_{\lm}(z;\al)$
is expressed as  
\begin{align*}
F_{(1,1,1,1)}(\bx;\al) & =\int_{C}(\det u)^{\al_{1}}(\det(1_{r}-u))^{\al_{2}}(\det(1_{r}-ux_{1}))^{\al_{3}}du,\\
F_{(2,1,1)}(\bx;\al) & =\int_{C}\etr(ux)(\det u)^{\al_{2}}(\det(1_{r}-u))^{\al_{3}}du,\\
F_{(2,2)}(\bx;\al) & =\int_{C}\etr(ux-u^{-1})(\det u)^{\al_{2}}du,\\
F_{(3,1)}(\bx;\al) & =\int_{C}\etr\left(ux-\frac{1}{2}u^{2}\right)(\det u)^{\al_{3}}du\\
F_{(4)}(\bx;\al) & =\int_{C}\etr\left(ux-\frac{1}{3}u^{3}\right)du,
\end{align*}
 where $u\in\mat(r)$ and $C$ denotes a domain of integration.
\end{prop}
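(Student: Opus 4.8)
The plan is to specialize the affine-coordinate form of the Radon HGF and to evaluate the character directly on the normal forms of Proposition~\ref{prop:ex-2}. Recall from Section~\ref{subsec:Definition-of-HGF} that, in the chart $U=\{[t]\in T\mid\det t'\neq0\}$ of $T=\gras(r,m)$ with affine coordinate $u=(t')^{-1}t''$, one has
\[
F_{\lm}(z,\al;C)=\int_{C(z)}\chi_{\lm}(\vec{u} z;\al)\,du,\qquad\vec{u}=(1_{r},u).
\]
Since here $m=2r$, the integration variable is $u\in\mat(r)$ and $du=\bigwedge_{i,j}du_{i,j}$ is an $r^{2}$-form. With $z=\bx$ and $\al$ the normalized parameter of Proposition~\ref{prop:ex-2}, the whole assertion then reduces to computing $\chi_{\lm}(\vec{u}\bx;\al)$ for each of the five partitions $(1,1,1,1)$, $(2,1,1)$, $(2,2)$, $(3,1)$, $(4)$ of $4$, using the explicit characters $\chi_{(1,1,1,1)},\dots,\chi_{(4)}$ written out at the beginning of Section~\ref{subsec:exa-gauss} (equivalently, Propositions~\ref{prop:Radon-conf-3-1} and~\ref{prop:char-conf-1}).

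First I would carry out the block computation. In every normal form $z_{0}$ has its first $r$ rows equal to $1_{r}$ and its last $r$ rows zero, so $\vec{u} z_{0}=1_{r}$; hence the factor $(\det h_{0})^{\al_{0}}$ is trivial and $\al_{0}$ drops out. Computing the remaining blocks $\vec{u} z_{k}^{(j)}$ directly, one finds that they are all among $u$, $1_{r}-u$, $ux$, $-ux$, $-1_{r}$ and $0$; for instance, for $\lm=(2,2)$ the two Jordan blocks of $\vec{u}\bx$ are $(1_{r},ux)$ and $(u,-1_{r})$, so in the labelling of $\chi_{(2,2)}$ one has $h_{0}^{-1}h_{1}=ux$ and $h_{2}^{-1}h_{3}=-u^{-1}$, while for $\lm=(4)$ the single Jordan block is $(1_{r},u,0,-ux)$. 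Substituting the normalized values of $\al$ from the table of Proposition~\ref{prop:ex-2} --- namely $\al_{1}=1$ for $(2,1,1)$, $\al_{1}=\al_{3}=1$ for $(2,2)$, $\al_{1}=0,\al_{2}=1$ for $(3,1)$, and $\al_{0}=-2r,\al_{1}=\al_{2}=0,\al_{3}=1$ for $(4)$ --- into $\chi_{\lm}$, and using $\etr(A+B)=\etr(A)\etr(B)$ together with the vanishing of the matrix $(h_{0}^{-1}h_{1})(h_{0}^{-1}h_{2})$ in $\chi_{(4)}$ (since $h_{0}^{-1}h_{2}=\vec{u} z_{2}=0$), gives $\chi_{\lm}(\vec{u}\bx;\al)$ equal to the integrand claimed in each case.

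The only point that needs care is the bookkeeping of signs and orientations, which is routine and conceptually empty. In the case $\lm=(4)$ the computation produces $\chi_{(4)}(\vec{u}\bx;\al)=\etr(-ux+\tfrac{1}{3}u^{3})$, which becomes the asserted $\etr(ux-\tfrac{1}{3}u^{3})$ after the substitution $u\mapsto-u$; this multiplies $du$ by $(-1)^{r^{2}}$ and reverses the orientation of the chain, so it is absorbed into the (unspecified) domain of integration $C$. More generally, any constant factor and any admissible reorientation or reparametrization of the cycle --- including the Jacobian $(\det g)^{-r}$ from the $\GL{2r}$-normalization of $z$ and the factor produced by the $H_{\lm}$-normalization, both already incorporated via Proposition~\ref{prop:covariance} in Proposition~\ref{prop:ex-2} --- is absorbed into $C$. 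The remaining work is simply to present the five block computations cleanly, keeping track of the dictionary between the block-Jordan labelling of $H_{\lm}$ used in this subsection and the $\theta_{k}$-character of Proposition~\ref{prop:char-conf-1}.
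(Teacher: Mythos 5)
Your proposal is correct and is essentially the argument the paper intends (the paper states Proposition \ref{prop:ex-3} as an immediate consequence of Proposition \ref{prop:ex-2} without writing out the block computation): one substitutes $\vec{u}=(1_{r},u)$ into each normal form $\bx$, reads off the blocks $h_{k}^{(j)}=\vec{u}z_{k}^{(j)}$, and evaluates the explicit characters with the normalized $\al$. You also correctly identify and resolve the one genuinely delicate point, namely that for $\lm=(4)$ the raw computation yields $\etr(-ux+\tfrac{1}{3}u^{3})$ and the stated integrand is recovered by the substitution $u\mapsto-u$, absorbed into the unspecified cycle $C$.
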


Let us see what can be obtained as the effect of the finite group
part of Weyl group $W_{\lm}$ when Theorem \ref{thm:main-2} is applied
to the above Radon HGF analogues. 

\subsubsection{Transformation formula for $\protect\lm=(1,1,1,1)$}

To understand the meaning the formula (\ref{eq:action-8}) for the
Radon HGF corresponding to the Gauss HGF, let us recall the formulae
for the classical case called ``Kummer's 24 solutions'' for the
Gauss HGE (\ref{eq:exa-7-1}). We know 
\begin{align*}
\ghyp(a,b,c;x) & =(1-x)^{-a}\,\ghyp\left(c-b,a,c\,;\frac{x}{x-1}\right)\\
 & =(1-x)^{-b}\,\ghyp\left(c-a,b,c\,;\frac{x}{x-1}\right)\\
 & =(1-x)^{c-a-b}\,\ghyp(c-a,c-b,c\,;x),
\end{align*}
 and 
\begin{align*}
 & x^{1-c}\,\ghyp(a+1-c,b+1-c,2-c;x)\\
 & \qquad=x^{1-c}(1-x)^{c-a-1}\,\ghyp\left(1-b,a+1-c,2-c\,;\frac{x}{x-1}\right)\\
 & \qquad=x^{1-c}(1-x)^{c-b-1}\,\ghyp\left(1-a,b+1-c,2-c\,;\frac{x}{x-1}\right)\\
 & \qquad=x^{1-c}(1-x)^{c-a-b}\,\ghyp(1-a,1-b,2-c\,;x).
\end{align*}
The first four expressions represent the solution of the Gauss HGE
at $x=0$ having the characteristic exponent $0$. On the other hand
the second four expressions represent the solution at $x=0$ with
the exponent $1-c$. In total we have $8$ expressions for solutions
of Gauss HGE at $x=0$. We also have $8$ expressions of solutions
at each singular point $x=1,\infty$, and hence 24 expressions for
the solutions of the Gauss HGE in total. 

We shall derive a similar formula for the Radon HGF $F_{(1,1,1,1)}(\bx;\al)$,
which will be simply denoted as $F(\bx,\al)$. We also use the notation
$H:=H_{(1,1,1,1)}$. Put 
\begin{equation}
X=\left\{ \bx=\left(\begin{array}{cccc}
1_{r} & 0 & 1_{r} & 1_{r}\\
0 & 1_{r} & -1_{r} & -x
\end{array}\right)\in Z_{\lm}\right\} \label{eq:exa-10}
\end{equation}
and let us compute the transformations of $X$ which are induced from
the action of the discrete part $\cP$ of the Weyl group $W:=W_{(1,1,1,1)}$.
Note that $\cP\simeq\S_{4}$ and $\sm\in\S_{4}$ is identified with
the permutation matrix $P_{\sm}=(\de_{\sm(i),j}\cdot1_{r})_{0\leq i,j\leq3}\in\GL{4r}$.
Let $K$ be the subgroup of $\S_{4}$ given by 
\[
K=\{\mathrm{id},(0,1)(2,3),(0,2)(1,3),(0,3)(1,2)\}.
\]
Firstly we consider the action of transpositions $\sm$.
\begin{lem}
\label{lem:ex-gauss-1}The action of $P_{\sm}$ for transpositions
$\sm$ on $X$ and on the space of $\al$ is described in the following
table: \medskip

\begin{tabular}{|c|c|c|}
\hline 
$\sm$ & Transformation of $\bx$ & Transformation of $\al$\tabularnewline
\hline 
\hline 
$(0,1)$ & $x\to x^{-1}$ & $\al\to(\al_{1},\al_{0},\al_{2},\al_{3})$\tabularnewline
\hline 
$(0,2)$ & $x\to1_{r}-x$ & $\al\to(\al_{2},\al_{1},\al_{0},\al_{3})$\tabularnewline
\hline 
$(0,3)$ & $x\to x(x-1_{r})^{-1}$ & $\al\to(\al_{3},\al_{1},\al_{2},\al_{0})$\tabularnewline
\hline 
$(1,2)$ & $x\to x(x-1_{r})^{-1}$ & $\al\to(\al_{0},\al_{2},\al_{1},\al_{3})$\tabularnewline
\hline 
$(1,3)$ & $x\to1_{r}-x$ & $\al\to(\al_{0},\al_{3},\al_{2},\al_{1})$\tabularnewline
\hline 
$(2,3)$ & $x\to x^{-1}$ & $\al\to(\al_{0},\al_{1},\al_{3},\al_{2})$\tabularnewline
\hline 
\end{tabular}\bigskip 
\end{lem}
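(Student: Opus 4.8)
The plan is to combine the block-permutation description of $\cP\simeq\S_{4}$ with the normalization procedure of Proposition \ref{prop:ex-2}~(1). Recall that for a transposition $\sm\in\S_{4}$ the matrix $P_{\sm}=(\de_{\sm(i),j}\cdot 1_{r})_{0\le i,j\le 3}$ acts on $z=(z_{0},z_{1},z_{2},z_{3})\in Z_{\lm}$ by right multiplication $z\mapsto zP_{\sm}$, which simply reorders the four column blocks ($(zP_{\sm})_{k}=z_{\sm^{-1}(k)}$). Thus $\bx P_{\sm}$ is obtained from $\bx$ in (\ref{eq:exa-10}) by permuting columns; in general it no longer lies in the slice $X$, so one must bring it back to the normal form by the action of $\GL{2r}\times H$, exactly as in Proposition \ref{prop:ex-2}~(1). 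The value of the parameter $x$ in the resulting normal form, together with $\sm(\al)$, is what the table records.

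Concretely, I would carry out the cases as follows. Fix a transposition $\sm$, write down $z':=\bx P_{\sm}$ explicitly, and solve for $g=\begin{pmatrix}a & b\\ c& d\end{pmatrix}\in\GL{2r}$ (blocks in $\mat(r)$) and $h=\diag(h_{0},h_{1},h_{2},h_{3})\in H$, $h_{j}\in\GL r$, such that $gz'h$ equals the normal form $\bx(x')$. The two columns of $z'$ that should become $\begin{pmatrix}1_{r}\\0\end{pmatrix}$ and $\begin{pmatrix}0\\1_{r}\end{pmatrix}$ force one off-diagonal block of $g$ to vanish and pin down $h_{0},h_{1}$; the column that should become $\begin{pmatrix}1_{r}\\-1_{r}\end{pmatrix}$ determines the remaining block of $g$ and $h_{2}$; and the last column then yields $h_{3}$ together with the new parameter $x'$. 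For $\sm=(0,1)$ this gives $x'=x^{-1}$, for $\sm=(0,2)$ it gives $x'=1_{r}-x$, for $\sm=(0,3)$ it gives $x'=x(x-1_{r})^{-1}$, and the remaining three transpositions reproduce the same three transformations; all the auxiliary matrices are invertible precisely because $\bx\in Z_{\lm}$, so that $x$, $1_{r}-x$ and $x-1_{r}$ are invertible. The transformation of $\al$ is immediate from Theorem \ref{thm:main-2}~(2): since $\chi_{(1,1,1,1)}$ is the product of the four characters attached to $z_{0},\dots,z_{3}$, the induced $\cP\simeq\S_{4}$-action on $\al=(\al_{0},\al_{1},\al_{2},\al_{3})$ is the tautological permutation action, which is exactly the third column of the table.

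A convenient structural shortcut, which I would mention, is that the map $x\mapsto x'$ depends only on the coset $\sm K$, where $K=\{\mathrm{id},(0,1)(2,3),(0,2)(1,3),(0,3)(1,2)\}$: each nontrivial element of $K$, realized as a block permutation of $\bx$, can be absorbed by a suitable $\GL{2r}\times H$ normalization \emph{without} changing $x$, so it suffices to compute three representatives, say $(0,1),(0,2),(0,3)$, and read off the rest. This is the matrix incarnation of the classical fact that the anharmonic group of cross-ratios is $\S_{4}/K\simeq\S_{3}$.

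The only real work is the bookkeeping in these (three, then six) linear-algebra computations, and the point to watch is that the renormalization genuinely uses both the left $\GL{2r}$-action and the right $H$-action — in particular $g$ need not be a block permutation of the two rows but may involve a shear (as already for $\sm=(0,2)$) — so the existence of the required $g$ and $h$ has to be verified in each case; it always succeeds because the relevant $2r\times 2r$ minors of $\bx P_{\sm}$ are nonzero by the definition of $Z_{\lm}$. Once the table is established, the precise functional equation for $F$ (with the factor $(\det g)^{-r}$ and the character factor $\chi_{\lm}(h;\al)$ from Proposition \ref{prop:covariance}) is extracted in the subsequent steps.
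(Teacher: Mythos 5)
Your proposal is correct and follows essentially the same route as the paper: write down $\bx P_{\sm}$, renormalize to the slice $X$ by an explicit $g\in\GL{2r}$ (generally a shear composed with a block-diagonal factor) and $h\in H$, read off $x'$, and observe that the $\al$-action is the tautological permutation. The only organizational difference is in the last three rows: the paper obtains $(1,2),(1,3),(2,3)$ by composing the already-computed transformations via $(1,2)=(0,1)(0,2)(0,1)$ etc., whereas your $K$-coset shortcut would require independently checking that the three nontrivial elements of $K$ act trivially on $X$ (in the paper that triviality is a \emph{corollary} of the lemma, not an input), so either compute those three cases directly or fall back on the composition argument.
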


\begin{proof}
1) \emph{Case $\sm=(0,1)$}. Apply $P_{\sm}$ to $\bx$ in (\ref{eq:exa-10}):
\[
\bx=\left(\begin{array}{cccc}
1_{r} & 0 & 1_{r} & 1_{r}\\
0 & 1_{r} & -1_{r} & -x
\end{array}\right)\mapsto z:=\bx P_{\sm}=\left(\begin{array}{cccc}
0 & 1_{r} & 1_{r} & 1_{r}\\
1_{r} & 0 & -1_{r} & -x
\end{array}\right).
\]
We transform $z$ to an element of $X$ by the action of $\GL{2r}\times H$.
Take
\[
g=\left(\begin{array}{cc}
0 & 1_{r}\\
1_{r} & 0
\end{array}\right)\in\GL{2r},\quad h=\diag(1_{r},1_{r},h_{2},h_{3})\in H
\]
and, noting $g=g^{-1}$, get 
\[
g^{-1}zh=\left(\begin{array}{cccc}
1_{r} & 0 & -h_{2} & -xh_{3}\\
0 & 1_{r} & h_{2} & h_{3}
\end{array}\right).
\]
 So we take $h$ as $h=\diag(1_{r},1_{r},-1_{r},-x^{-1})$ and get
\[
\bx'=g^{-1}zh=\left(\begin{array}{cccc}
1_{r} & 0 & 1_{r} & 1_{r}\\
0 & 1_{r} & -1_{r} & -x^{-1}
\end{array}\right).
\]
Thus we obtain the transformation $X\ni\bx\mapsto\bx'\in X$ which
is equivalent to $\GL r\ni x\mapsto x^{-1}\in\GL r$. 

2) \emph{Case $\sm=(0,2)$}. Consider $\bx\mapsto z:=\bx P_{\sm}$.
Then 
\[
z=\left(\begin{array}{cccc}
1_{r} & 0 & 1_{r} & 1_{r}\\
-1_{r} & 1_{r} & 0 & -x
\end{array}\right).
\]
We transform $z$ to an element of $X$ by the action of $\GL{2r}\times H$.
Take
\[
g_{1}=\left(\begin{array}{cc}
1_{r} & 0\\
-1_{r} & 1_{r}
\end{array}\right)\in\GL{2r},\quad h=\diag(h_{0},1_{r},h_{2},h_{3})
\]
and get 
\[
g_{1}^{-1}zh=\left(\begin{array}{cccc}
h_{0} & 0 & h_{2} & h_{3}\\
0 & 1_{r} & h_{2} & (1_{r}-x)h_{3}
\end{array}\right)
\]
Further we take $g_{2}=\diag(h_{0},1_{r})$ to obtain
\[
\bx'=g_{2}^{-1}g_{1}^{-1}zh=\left(\begin{array}{cccc}
1_{r} & 0 & h_{0}^{-1}h_{2} & h_{0}^{-1}h_{3}\\
0 & 1_{r} & h_{2} & (1_{r}-x)h_{3}
\end{array}\right).
\]
So we determine $h_{0}=h_{2}=h_{3}=-1_{r}$ to obtain 
\[
\bx'=\left(\begin{array}{cccc}
1_{r} & 0 & 1_{r} & 1_{r}\\
0 & 1_{r} & -1_{r} & -(1_{r}-x)
\end{array}\right).
\]
Thus we obtain the transformation $x\mapsto1_{r}-x$ induced by the
action of $P_{\sm}$.

3) \emph{Case $\sm=(0,3)$}. Consider the change $\bx\mapsto z:=\bx P_{\sm}$.
We take $z$ to the normal form.
\[
z=\left(\begin{array}{cccc}
1_{r} & 0 & 1_{r} & 1_{r}\\
-x & 1_{r} & -1_{r} & 0
\end{array}\right)\to g_{1}^{-1}zh=\left(\begin{array}{cccc}
h_{0} & 0 & h_{2} & h_{3}\\
0 & 1_{r} & -(1_{r}-x)h_{2} & xh_{3}
\end{array}\right)
\]
by taking $g_{1}\in\GL{2r}$ and $h\in H$ as 
\[
g_{1}=\left(\begin{array}{cc}
1_{r} & 0\\
-x & 1_{r}
\end{array}\right),\quad h=\diag(h_{0},1_{r},h_{2},h_{3}).
\]
Multiply $g_{2}^{-1}=\diag(h_{0}^{-1},1_{r})$ to $g_{1}^{-1}zh$
from the left to obtain
\[
\bx'=g_{2}^{-1}g_{1}^{-1}zh=\left(\begin{array}{cccc}
1_{r} & 0 & h_{0}^{-1}h_{2} & h_{0}^{-1}h_{3}\\
0 & 1_{r} & -(1_{r}-x)h_{2} & xh_{3}
\end{array}\right).
\]
So we take $h$ so that $h_{0}^{-1}h_{2}=h_{0}^{-1}h_{3}=(1_{r}-x)h_{2}=1_{r}$,
namely $h_{0}=h_{2}=h_{3}=(1_{r}-x)^{-1}$, to obtain 
\[
\bx'=\left(\begin{array}{cccc}
1_{r} & 0 & 1_{r} & 1-r\\
0 & 1_{r} & -1_{r} & -x'
\end{array}\right),\quad x'=x(x-1_{r})^{-1}.
\]
Thus we obtain the transformation $x\mapsto x(x-1_{r})^{-1}$. For
$\sm=(1,2),(1,3),(2,3)$, it is sufficient to note that
\[
(1,2)=(0,1)(0,2)(0,1),\quad(1,3)=(0,1)(0,3)(0,1),\quad(2,3)=(0,2)(0,3)(0,2).
\]
For example, for $\sm=(1,2)$, the transformation can be computed
as 
\[
x\overset{P_{(0,1)}}{\longrightarrow}x^{-1}\overset{P_{(0,2)}}{\longrightarrow}1_{r}-x^{-1}\overset{P_{(0,1)}}{\longrightarrow}x(x-1_{r})^{-1}.
\]
The cases $\sm=(1,3),(2,3)$ can be treated in a similar way.
\end{proof}
From Lemma \ref{lem:ex-gauss-1}, we have the following result.
\begin{cor}
The transformation of $X$ induced by $P_{\sm}\in\cP\;(\sm\in K)$
is the identity and $P_{\sm}$ induces the transformation of parameter
$\al=(\al_{0},\al_{1},\al_{2},\al_{3})$ given as \medskip 

\begin{tabular}{|c|c|}
\hline 
$\sm$ & Transformation of $\al$\tabularnewline
\hline 
\hline 
$\mathrm{id}$ & $\al\to\al$\tabularnewline
\hline 
$(0,1)(2,3)$ & $\al\to(\al_{1},\al_{0},\al_{3},\al_{2})$\tabularnewline
\hline 
$(0,2)(1,3)$ & $\al\to(\al_{2},\al_{3},\al_{0},\al_{3})$\tabularnewline
\hline 
$(0,3)(1,2)$ & $\al\to(\al_{3},\al_{2},\al_{1},\al_{0})$\tabularnewline
\hline 
\end{tabular}.\bigskip 
\end{cor}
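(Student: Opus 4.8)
The plan is to derive the corollary from Lemma \ref{lem:ex-gauss-1} by composition. The group $\cP\simeq\S_{4}$ of block permutation matrices $P_{\sm}$ is a subgroup of $N_{G}(H)$ (with $H=H_{(1,1,1,1)}$), so by Lemma \ref{lem:act-1} it acts on $Z=Z_{(1,1,1,1)}$ via $z\mapsto zP_{\sm}$; since $\cP$ normalizes $H$ and this right action commutes with the left $\GL{2r}$-action, it descends to an action on the quotient $\GL{2r}\backslash Z/H$, which is realized by the normal form $X$, and this descended action is exactly the ``transformation of $X$'' tabulated in Lemma \ref{lem:ex-gauss-1}. Hence, for any $\sm,\tau\in\S_{4}$, the transformations of $X$ and of the parameter $\al$ attached to $P_{\sm}P_{\tau}$ are the composites of those attached to $P_{\sm}$ and $P_{\tau}$ (in the order dictated by the anti-homomorphism $\f$). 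Each nontrivial element of $K$ is a product of two \emph{disjoint}, hence commuting, transpositions, so this composite may be formed in either order and the orientation issue evaporates.

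For the action on $X$: reading off Lemma \ref{lem:ex-gauss-1}, the two transpositions in each of the pairs $\{(0,1),(2,3)\}$, $\{(0,2),(1,3)\}$, $\{(0,3),(1,2)\}$ act on $X$ through one and the same involution of the $x$-variable, namely $x\mapsto x^{-1}$, $x\mapsto 1_{r}-x$, and $x\mapsto x(x-1_{r})^{-1}$ respectively. The first two are plainly involutions of $\GL r$; for the third, putting $y=x(x-1_{r})^{-1}$ gives $y-1_{r}=(x-1_{r})^{-1}$, whence $y(y-1_{r})^{-1}=x$, so it too is an involution. Composing an involution with itself yields the identity, so $P_{\sm}$ induces the identity transformation of $X$ for every $\sm\in K$; this is the first assertion of the corollary.

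For the action on $\al=(\al_{0},\al_{1},\al_{2},\al_{3})$: since the transposition $(a,b)$ interchanges $\al_{a}$ with $\al_{b}$ (again Lemma \ref{lem:ex-gauss-1}), the element $(0,1)(2,3)$ performs the swaps $\al_{0}\leftrightarrow\al_{1}$ and $\al_{2}\leftrightarrow\al_{3}$ simultaneously, $(0,2)(1,3)$ performs $\al_{0}\leftrightarrow\al_{2}$ and $\al_{1}\leftrightarrow\al_{3}$, and $(0,3)(1,2)$ performs $\al_{0}\leftrightarrow\al_{3}$ and $\al_{1}\leftrightarrow\al_{2}$; together with the trivial case $\sm=\mathrm{id}$ this gives exactly the stated table. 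I do not expect a genuine obstacle here: the only delicate point is keeping track of the anti-homomorphism direction of $\f$, and, as observed, that becomes vacuous because every element of $K$ is a product of commuting involutions.
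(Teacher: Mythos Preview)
Your argument is correct and matches the paper's approach: the paper states the corollary as an immediate consequence of Lemma \ref{lem:ex-gauss-1} without further detail, and your proof supplies exactly the intended composition argument, including the clean observation that the two transpositions in each pair of $K$ induce the \emph{same} involution of $x$ (so their composite is the identity on $X$). Note incidentally that your computation for $(0,2)(1,3)$ yields $(\al_{2},\al_{3},\al_{0},\al_{1})$, which corrects an evident misprint in the paper's table (where $\al_{3}$ appears in place of $\al_{1}$ in the last slot).
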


Note that $K$ is a normal subgroup of $\S_{4}$ and the nontrivial
transformations of $X$ comes from the representatives of the quotient
group $\S_{4}\slash K\simeq\S_{3}$. 
\begin{cor}
The representatives of the group $\S_{4}\slash K$, the transformation
of $X$ and of the parameter $\al$ are given by the following table.
\end{cor}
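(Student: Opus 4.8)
The plan is to reduce the computation to the transpositions already handled in Lemma~\ref{lem:ex-gauss-1} by exploiting the group theory. Recall from the preceding corollary that $K=\{\mathrm{id},(0,1)(2,3),(0,2)(1,3),(0,3)(1,2)\}$ is a normal subgroup of $\S_4$ whose elements induce the identity transformation of $X$; hence the action of $\cP\simeq\S_4$ on $X$ (via $\bx\mapsto\bx P_\sm$, followed by the $\GL{2r}\times H$-normalization) factors through the quotient $\S_4/K\simeq\S_3$, and it suffices to fix once and for all a transversal of $K$ in $\S_4$ and record the induced transformation of $\bx$ and of $\al$ for each of its six elements. First I would take as transversal the subgroup $\mathrm{Stab}(0)\subset\S_4$ of permutations fixing the index $0$, namely $\{\mathrm{id},(1,2),(1,3),(2,3),(1,2,3),(1,3,2)\}$. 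This is indeed a complete set of coset representatives: $K$ acts simply transitively on $\{0,1,2,3\}$, so $\S_4=\mathrm{Stab}(0)\cdot K$ and $\mathrm{Stab}(0)\cap K=\{\mathrm{id}\}$, whence the six elements of $\mathrm{Stab}(0)$ lie in pairwise distinct cosets.

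For the three transpositions $(1,2),(1,3),(2,3)$ the required entries are already in Lemma~\ref{lem:ex-gauss-1}: $(1,2)$ gives $x\mapsto x(x-1_r)^{-1}$ and $\al\mapsto(\al_0,\al_2,\al_1,\al_3)$; $(1,3)$ gives $x\mapsto1_r-x$ and $\al\mapsto(\al_0,\al_3,\al_2,\al_1)$; $(2,3)$ gives $x\mapsto x^{-1}$ and $\al\mapsto(\al_0,\al_1,\al_3,\al_2)$. The two $3$-cycles I would obtain by composition: writing $(1,2,3)=(1,2)(2,3)$ and $(1,3,2)=(1,2)(1,3)$, using that $\sm\mapsto P_\sm$ is a homomorphism ($P_{\sm\tau}=P_\sm P_\tau$) and that $\f$ is an anti-homomorphism (Lemma~\ref{lem:act-1}), the transformation attached to a product is the composite of those attached to the factors, taken in the appropriate order. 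Chaining the elementary substitutions then yields $x\mapsto1_r-x^{-1}$, $\al\mapsto(\al_0,\al_2,\al_3,\al_1)$ for $(1,2,3)$ and $x\mapsto(1_r-x)^{-1}$, $\al\mapsto(\al_0,\al_3,\al_1,\al_2)$ for $(1,3,2)$. Together with the identity row this produces all six rows, and one observes that the six substitutions of $x$ are precisely the (noncommutative analogue of the) anharmonic group, the expected avatar of $\S_4/K\simeq\S_3$, which provides a useful consistency check.

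The one delicate point is the bookkeeping behind ``composition of transformations of $X$.'' Because $\f$ is an anti-homomorphism and because $P_\sm$ only sends $\bx$ to $\bx P_\sm$ up to the $\GL{2r}\times H$-normalization (as in the proof of Lemma~\ref{lem:ex-gauss-1} and Proposition~\ref{prop:covariance}), I must check that normalizing $\bx P_\sm$ and then $(\text{that})\,P_\tau$ reproduces the normalization of $\bx P_{\sm\tau}$. This is guaranteed by the uniqueness of the normal form in $X=\GL{2r}\backslash Z_\lm/H_\lm$ (Proposition~\ref{prop:ex-2}(1), i.e.\ Lemma~4.3 of \cite{kimura-2}): any route of bringing a point of $Z_\lm$ into $X$ lands on the same element of $X$, so the induced substitution of the $x$-parameter is well defined and behaves additively along any chosen factorization into transpositions. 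Granting this, the remaining work is just the routine matrix manipulation already illustrated for the transpositions, and the table follows.
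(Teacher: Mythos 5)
Your argument is correct and rests on the same two ingredients as the paper's: the explicit transposition computations of Lemma \ref{lem:ex-gauss-1}, and the fact (from the preceding corollary) that $K$ acts trivially on $X$, so that the induced action on $X$ factors through $\S_{4}/K$ and general elements can be handled by composing transpositions, the composite being well defined by uniqueness of the normal form in $\GL{2r}\backslash Z_{\lm}/H_{\lm}$. The one substantive difference is your choice of transversal: you take $\mathrm{Stab}(0)=\{\mathrm{id},(1,2),(1,3),(2,3),(1,2,3),(1,3,2)\}$ (correctly verified to be a complete set of coset representatives), whereas the paper's table uses $\{\mathrm{id},(0,1),(0,2),(0,3),(0,1,2),(0,2,1)\}$. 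Your six $x$-substitutions agree with the paper's row by row under the coset correspondence ($(2,3)\sim(0,1)$, $(1,3)\sim(0,2)$, $(1,2)\sim(0,3)$, $(1,2,3)\sim(0,2,1)$, $(1,3,2)\sim(0,1,2)$), and both tables realize the anharmonic group; but the $\al$-columns genuinely differ, because only the action on $X$, not the action on $\al$, is $K$-invariant. So to reproduce the paper's literal table you must either run the same composition argument on the paper's representatives (e.g.\ obtain $(0,1,2)$ and $(0,2,1)$ from $(0,1)$ and $(0,2)$) or transport your entries across cosets and recompute $\sm(\al)$ for the new representative. Finally, the phrase ``taken in the appropriate order'' hides the only real trap here: the two possible orders of chaining give different answers for the $3$-cycles (e.g.\ $1_{r}-x^{-1}$ versus $(1_{r}-x)^{-1}$ for $(1,2,3)$), and the correct convention is the left-to-right chaining along the word in transpositions, exactly as in the paper's own computation of $(1,2)=(0,1)(0,2)(0,1)$; your stated results show you used the right one, but this should be made explicit rather than left to ``the appropriate order.''
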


\medskip

\begin{tabular}{|c|c|c|}
\hline 
Representative $\sm$ & Transformation of $\bx$ & Transformation of $\al$\tabularnewline
\hline 
\hline 
$\mathrm{id}$ & $x\to x$ & $\al\to\al$\tabularnewline
\hline 
$(0,1)$ & $x\to x^{-1}$ & $\al\to(\al_{1},\al_{0},\al_{2},\al_{3})$\tabularnewline
\hline 
$(0,2)$ & $x\to1_{r}-x$ & $\al\to(\al_{2},\al_{1},\al_{0},\al_{3})$\tabularnewline
\hline 
$(0,3)$ & $x\to x(x-1_{r})^{-1}$ & $\al\to(\al_{3},\al_{1},\al_{2},\al_{0})$\tabularnewline
\hline 
$(0,1,2)$ & $x\to(1_{r}-x)^{-1}$ & $\al\to(\al_{1},\al_{2},\al_{0},\al_{3})$\tabularnewline
\hline 
$(0,2,1)$ & $x\to(x-1_{r})x^{-1}$ & $\al\to(\al_{2},\al_{0},\al_{1},\al_{3})$\tabularnewline
\hline 
\end{tabular}.\bigskip 

Let us derive some transformation formula for (\ref{eq:exa-8}) applying
Theorem \ref{thm:main-2} (2) for the elements of $\S_{4}$ which
preserve the indices $\{1,2\}$ as a set:
\begin{equation}
Q=\{\mathrm{id},(1,2),(0,3),(0,3)(1,2)\}\simeq\Z_{2}\times\Z_{2}.\label{eq:exa-11}
\end{equation}
Recall that the Hermitian matrix integral analogue (\ref{eq:exa-8})
of the Gauss HGF is related to the Radon HGF of type $\lm=(1,1,1,1)$
as 
\begin{align*}
\,_{2}\cF_{1}(a,b,c;x) & =A(\al)F(\bx,\al;C)\\
 & =A(\al)\int_{C}(\det u)^{\al_{1}}\left(\det(1_{r}-u)\right)^{\al_{2}}\left(\det(1_{r}-ux)\right)^{\al_{3}}du,
\end{align*}
where
\[
\bx=\left(\begin{array}{cccc}
1_{r} & 0 & 1_{r} & 1_{r}\\
0 & 1_{r} & -1_{r} & -x
\end{array}\right),\quad C=\{u\in\herm\mid u>0,1_{r}-u>0\}
\]
 and 
\[
(a,b,c)=(\al_{1}+r,-\al_{3},\al_{1}+\al_{2}+2r),\quad A(\al)=\left(\frac{\sqrt{-1}}{2}\right)^{r(r-1)/2}\frac{\G_{r}(\al_{1}+\al_{2}+2r)}{\G_{r}(\al_{1}+r)\G_{r}(\al_{2}+r)}.
\]

\begin{prop}
\label{prop:ex-gauss-2}Let $Q$ be the group defined by (\ref{eq:exa-11}).
Then the action of $(1,2)\in Q$ on the Radon HGF induces

\[
\,_{2}\cF_{1}(a,b,c;x)=\left(\det(1_{r}-x)\right)^{-b}\,_{2}\cF_{1}(c-a,b,c;x(x-1_{r})^{-1}).
\]
\end{prop}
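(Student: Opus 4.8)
The plan is to apply Theorem~\ref{thm:main-2}~(2) to the transposition $\sm=(1,2)\in\S_4\simeq\cP$ and then to pass from the resulting identity for $F_{(1,1,1,1)}$ to one for $\,_{2}\cF_{1}$ through the dictionary displayed just before the proposition. With $z=\bx$ as in (\ref{eq:exa-10}), Theorem~\ref{thm:main-2}~(2) gives $F(\bx,\al;C)=F(\bx P_{\sm},\sm(\al);C)$ with $\sm(\al)=(\al_{0},\al_{2},\al_{1},\al_{3})$; that $P_{\sm}$ realizes $x\mapsto x(x-1_{r})^{-1}$ on $X$ is already recorded in Lemma~\ref{lem:ex-gauss-1}. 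So what remains is a careful bookkeeping of the normalization of $\bx P_{\sm}$ back to the normal form together with the cycle it carries.

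First I would make that normalization explicit. A direct computation in the spirit of the proof of Lemma~\ref{lem:ex-gauss-1} shows that, with
\[
g=\begin{pmatrix}1_{r} & 1_{r}\\ 0 & -1_{r}\end{pmatrix}\in\GL{2r},\qquad h=\diag\bigl(1_{r},1_{r},1_{r},(1_{r}-x)^{-1}\bigr)\in H,
\]
one has $g^{-1}(\bx P_{\sm})h=\bx'$, where $\bx'$ is the normal form (\ref{eq:exa-10}) with $x$ replaced by $x'=x(x-1_{r})^{-1}$ (note $g=g^{-1}$). Hence $\bx P_{\sm}=g\bx'h^{-1}$, and Proposition~\ref{prop:covariance}~(1), (2) yield
\[
F(\bx,\al;C)=(\det g)^{-r}\,\chi_{(1,1,1,1)}\bigl(h^{-1};\sm(\al)\bigr)\,F(\bx',\sm(\al);\tilde C).
\]
Since $\det g=(-1)^{r}$ we get $(\det g)^{-r}=(-1)^{r^{2}}$, and since $h^{-1}=\diag(1_{r},1_{r},1_{r},1_{r}-x)$ while the last component of $\sm(\al)$ is $\al_{3}$, the character value is $\chi_{(1,1,1,1)}(h^{-1};\sm(\al))=(\det(1_{r}-x))^{\al_{3}}$.

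The step I expect to be the main obstacle is the identification of the transported cycle $\tilde C$ with the cycle $C=\{u\in\herm\mid u>0,\ 1_{r}-u>0\}$ of the integral representation. This is handled exactly as in the proof of the identity $B_{r}(a,b)=B_{r}(b,a)$: the $\GL{2r}$-action of $g$ induces on the affine chart the substitution $u\mapsto 1_{r}-u$, so $\tilde C$ is the image of $C$ under this map; passing to the variables of the eigenvalues of $u$, $C$ corresponds to $(\overrightarrow{0,1})^{r}$ and $\tilde C$ to $(\overrightarrow{1,0})^{r}$, whence $F(\bx',\sm(\al);\tilde C)=(-1)^{r}F(\bx',\sm(\al);C)$. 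The two signs combine into $(-1)^{r^{2}+r}=1$, so $F(\bx,\al;C)=(\det(1_{r}-x))^{\al_{3}}F(\bx',\sm(\al);C)$. Because the homology theory underlying these Hermitian matrix integrals is only partially developed (cf.\ the remark after the proof of $B_{r}(a,b)=B_{r}(b,a)$), this sign bookkeeping has to be argued through the eigenvalue reduction rather than abstractly.

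Finally I would translate through the dictionary. As $A(\al)$ is symmetric under $\al_{1}\leftrightarrow\al_{2}$, one has $A(\al)=A(\sm(\al))$; multiplying the last identity by $A(\al)$ and using $A(\sm(\al))F(\bx',\sm(\al);C)=\,_{2}\cF_{1}(\al_{2}+r,-\al_{3},\al_{1}+\al_{2}+2r;x')$ together with $\al_{2}+r=c-a$, $-\al_{3}=b$ (so that $(\det(1_{r}-x))^{\al_{3}}=(\det(1_{r}-x))^{-b}$) and $\al_{1}+\al_{2}+2r=c$, one obtains
\[
\,_{2}\cF_{1}(a,b,c;x)=(\det(1_{r}-x))^{-b}\,_{2}\cF_{1}\bigl(c-a,b,c;x(x-1_{r})^{-1}\bigr),
\]
which is the assertion.
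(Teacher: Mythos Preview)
Your proof is correct and follows essentially the same route as the paper's: apply Theorem~\ref{thm:main-2}~(2) for $\sm=(1,2)$, normalize $\bx P_{\sm}$ back to the standard form $\bx'$ via the same $g$ and $h$, track the character and determinant factors through Proposition~\ref{prop:covariance}, and identify the transported cycle with $(-1)^{r}C$ via the substitution $u\mapsto 1_{r}-u$. Your sign bookkeeping and your explicit appeal to the eigenvalue-reduction argument used for $B_{r}(a,b)=B_{r}(b,a)$ are a bit more spelled out than in the paper, but the argument is the same.
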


\begin{proof}
For $(1,2)\in Q$, we compute an identity for $\,_{2}\cF_{1}(a,b,c;X)$
derived by virtue of Theorem \ref{thm:main-2}. Put $z:=\bx P_{\sm}$
and normalize it to the normal form by the action of $\GL{2r}\times H$:
\begin{align*}
z & =\left(\begin{array}{cccc}
1_{r} & 1_{r} & 0 & 1_{r}\\
0 & -1_{r} & 1_{r} & -x
\end{array}\right)\mapsto g^{-1}z=\left(\begin{array}{cccc}
1_{r} & 0 & 1_{r} & 1_{r}-x\\
0 & 1_{r} & -1_{r} & x
\end{array}\right)\\
 & \mapsto g^{-1}zh=\left(\begin{array}{cccc}
1_{r} & 0 & 1_{r} & (1_{r}-x)h_{3}\\
0 & 1_{r} & -1_{r} & xh_{3}
\end{array}\right),
\end{align*}
where 
\[
g=\left(\begin{array}{cc}
1_{r} & 1_{r}\\
0 & -1_{r}
\end{array}\right)\in\GL{2r},\quad h=\diag(1_{r},1_{r},1_{r},h_{3})\in H.
\]
 So we take $h_{3}=(1_{r}-x)^{-1}$ and obtain 
\[
\bx':=g^{-1}zh=\left(\begin{array}{cccc}
1_{r} & 0 & 1_{r} & 1_{r}\\
0 & 1_{r} & -1_{r} & -x'
\end{array}\right),\quad x'=x(x-1_{r})^{-1}.
\]
 Note that $A(\al)=A(\sm(\al))$, since $\sm$ exchange $\al_{1}$
and $\al_{2}$ and $A(\al)$ is invariant by this exchange. Then
\begin{align*}
\,_{2}\cF_{1}(a,b,c;x) & =A(\al)F(\bx,\al;C)\\
 & =A(\sm(\al))F(\bx P_{\sm},\sm(\al);C)\\
 & =A(\sm(\al))F(g\bx'h^{-1},\sm(\al);C)\\
 & =A(\sm(\al))(\det g)^{-r}\chi(h,\sm(\al))^{-1}F(\bx',\sm(\al);C').
\end{align*}
where $C'$ is obtained from $C$ as the image of the map $\mat(r)\ni u\mapsto1_{r}-u\in\mat(r)$.
Since $C\subset\herm$, this map is considered from $\herm$ to itself.
Hence $C'=(-1)^{r}C$. Noting $\det g=(-1)^{r}$, we have 
\begin{align*}
\,_{2}\cF_{1}(a,b,c;x) & =A(\sm(\al))\left(\det(1_{r}-x)\right)^{\al_{3}}F(\bx',\sm(\al);C)\\
 & =\left(\det(1_{r}-x)\right)^{\al_{3}}\,_{2}\cF_{1}(\al_{2}+r,-\al_{3},\al_{1}+\al_{2}+2r;x(x-1_{r})^{-1})\\
 & =\left(\det(1_{r}-x)\right)^{-b}\,_{2}\cF_{1}(c-a,b,c;x(x-1_{r})^{-1}).
\end{align*}
\end{proof}
\begin{rem}
Similar result as in Proposition \ref{prop:ex-gauss-2} is given as
Proposition XV.3.4. (i) in \cite{Faraut}.
\end{rem}

\begin{conjecture}
The action of $(0,3),(1,2)(0,3)\in Q$ on the Radon HGF induces the
identities:

\begin{align}
\,_{2}\cF_{1}(a,b,c;X) & =\left(\det(1_{r}-X)\right)^{-a}\,_{2}\cF_{1}(a,c-b,c;X(X-1_{r})^{-1})\label{eq:conj-1}\\
 & =\left(\det(1_{r}-X)\right)^{c-a-b}\,_{2}\cF_{1}(c-a,c-b,c;X).\label{eq:conj-2}
\end{align}
\end{conjecture}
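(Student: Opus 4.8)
The plan is to repeat, for the element $\sm=(0,3)\in Q$, the argument used in the proof of Proposition \ref{prop:ex-gauss-2} for the transposition $(1,2)$, and then to obtain (\ref{eq:conj-2}) by composing (\ref{eq:conj-1}) with the formula of Proposition \ref{prop:ex-gauss-2}. By Lemma \ref{lem:ex-gauss-1}, $P_{(0,3)}$ induces on $X$ the transformation $x\mapsto x(x-1_{r})^{-1}$ and on the parameters the transformation $\al\mapsto\sm(\al)=(\al_{3},\al_{1},\al_{2},\al_{0})$, and the computation in the proof of that lemma exhibits $g\in\GL{2r}$ and $h\in H$,
\[
g=\begin{pmatrix}1_{r} & 0\\ -x & 1_{r}\end{pmatrix}\diag\bigl((1_{r}-x)^{-1},1_{r}\bigr),\qquad h=\diag\bigl((1_{r}-x)^{-1},1_{r},(1_{r}-x)^{-1},(1_{r}-x)^{-1}\bigr),
\]
with $\bx P_{(0,3)}=g\,\bx'\,h^{-1}$, where $\bx'$ is the normal form carrying the parameter $x'=x(x-1_{r})^{-1}$.

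First I would combine Theorem \ref{thm:main-2} (2), which gives $F(\bx P_{(0,3)},\sm(\al);C)=F(\bx,\al;C)$, with Proposition \ref{prop:covariance} applied to $\bx P_{(0,3)}=g\,\bx'\,h^{-1}$, getting
\[
F(\bx,\al;C)=(\det g)^{-r}\,\chi(h;\sm(\al))^{-1}\,F(\bx',\sm(\al);\tilde C),
\]
where $\tilde C$ is the cycle obtained from $C$ through the coordinate change that $g$ induces on $\gras(r,2r)$ (the action of $h$ on the right affects only the $\C^{n}$–directions, and so contributes merely the character factor). Then I would evaluate the scalar prefactor: one has $\det g=\det(1_{r}-x)^{-1}$ and, using the constraint $\al_{0}+\al_{1}+\al_{2}+\al_{3}=-2r$ attached to $\lm=(1,1,1,1)$, $\chi(h;\sm(\al))=\det(1_{r}-x)^{-(\al_{0}+\al_{2}+\al_{3})}=\det(1_{r}-x)^{2r+\al_{1}}$, whence
\[
(\det g)^{-r}\,\chi(h;\sm(\al))^{-1}=\det(1_{r}-x)^{-(\al_{1}+r)}=\det(1_{r}-x)^{-a}.
\]
Since $(0,3)$ fixes $\al_{1}$ and $\al_{2}$, the normalising constant satisfies $A(\sm(\al))=A(\al)$; and under the dictionary $(a,b,c)=(\al_{1}+r,-\al_{3},\al_{1}+\al_{2}+2r)$ the shifted parameter $\sm(\al)=(\al_{3},\al_{1},\al_{2},\al_{0})$ corresponds to $(a,c-b,c)$. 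Multiplying the displayed identity by $A(\al)$ and invoking Proposition \ref{prop:ex-3} would then yield exactly (\ref{eq:conj-1}), \emph{provided} $\tilde C$ equals, up to the orientation sign that cancels as in the proof of Proposition \ref{prop:ex-gauss-2}, the cycle $\{U\in\herm\mid U>0,\ 1_{r}-U>0\}$ that defines $\,_{2}\cF_{1}(a,c-b,c;x')$.

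The hard part will be precisely this last identification of cycles. The coordinate change that $g$ induces on the affine chart of $\gras(r,2r)$ is the matrix M\"obius transformation $u\mapsto u'=(1_{r}-x)(1_{r}-ux)^{-1}u$, which is genuinely projective (it involves a matrix inverse) rather than affine as in the case of $(1,2)$. For $r=1$ this is the classical substitution behind Pfaff's transformation and the argument is elementary, so that (\ref{eq:conj-1}) and (\ref{eq:conj-2}) specialise to the Pfaff and Euler transformations of $\,_{2}F_{1}$; but for $r>1$ one must control the image of the bounded positive domain $\{U>0,\ 1_{r}-U>0\}\subset\herm$ under $u\mapsto u'$ together with its Jacobian and orientation, and, more fundamentally, one needs a homology theory for these Hermitian matrix integrals identifying the relevant cycles and computing the action of such transformations on them. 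As the remark after Proposition \ref{prop:ex-gauss-2} points out, this theory is not yet available, which is why the statement remains conjectural.

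Finally, once (\ref{eq:conj-1}) is in hand, (\ref{eq:conj-2}) follows formally. The map $x\mapsto x(x-1_{r})^{-1}$ is an involution and satisfies $1_{r}-x(x-1_{r})^{-1}=(1_{r}-x)^{-1}$; applying Proposition \ref{prop:ex-gauss-2} and then (\ref{eq:conj-1}) to the resulting term gives
\begin{align*}
\,_{2}\cF_{1}(a,b,c;X) & =\det(1_{r}-X)^{-b}\,_{2}\cF_{1}\bigl(c-a,b,c;X(X-1_{r})^{-1}\bigr)\\
 & =\det(1_{r}-X)^{-b}\det(1_{r}-X)^{c-a}\,_{2}\cF_{1}(c-a,c-b,c;X),
\end{align*}
which is (\ref{eq:conj-2}); equivalently this is Theorem \ref{thm:main-2} (2) applied directly to $P_{(0,3)(1,2)}\in Q$, subject to the same cycle identification.
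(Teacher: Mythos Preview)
Your heuristic argument is essentially the paper's own: the statement is a \emph{conjecture}, and the paper's discussion after it carries out exactly the computation you describe for $\sm=(0,3)$ (same $g$, $h$, same prefactor $\det(1_{r}-x)^{-a}$, same induced M\"obius map $u\mapsto(1_{r}-x)(1_{r}-ux)^{-1}u$) and isolates the identical gap, namely that one does not know $\tilde C\sim C$ in the relevant homology group.

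The one genuine difference is your treatment of (\ref{eq:conj-2}). The paper handles $\sm=(1,2)(0,3)$ directly, with its own $g,h$ and a second, independent cycle identification to justify; you instead deduce (\ref{eq:conj-2}) formally from Proposition~\ref{prop:ex-gauss-2} composed with (\ref{eq:conj-1}), using that $x\mapsto x(x-1_{r})^{-1}$ is an involution. Your route is more economical---it shows that (\ref{eq:conj-2}) follows from (\ref{eq:conj-1}) alone, so only \emph{one} cycle identification is needed rather than two---while the paper's direct computation has the advantage of exhibiting the explicit transformation attached to $(1,2)(0,3)$ and its own cycle map $u\mapsto(x-1_{r})(1_{r}-ux)^{-1}(1_{r}-u)(x-1_{r})^{-1}$. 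A minor point: the remark you cite after Proposition~\ref{prop:ex-gauss-2} concerns Faraut--Kor\'anyi; the caveat about the undeveloped homology theory appears earlier (after the beta-function proposition) and in the paper's own discussion of the conjecture.
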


We explain why this statement stays at the level of a conjecture.
When we try to prove the formulas (\ref{eq:conj-1}) and (\ref{eq:conj-2})
applying similar argument as in the proof of Proposition \ref{prop:ex-gauss-2}
for $\sm=(0,3)$ and $\sm=(1,2)(0,3)$, we encounter the problem of
homology group as follows.

\emph{1) Case $\sm=(0,3)$}. The normalization of $z:=\bx P_{\sm}$
by the action of $\GL{2r}\times H$ is already treated in the proof
of Lemma \ref{lem:ex-gauss-1}. We have 
\[
\bx'=g^{-1}zh=\left(\begin{array}{cccc}
1_{r} & 0 & 1_{r} & 1_{r}\\
0 & 1_{r} & -1_{r} & -x'
\end{array}\right),\quad x'=x(x-1_{r})^{-1}
\]
by 
\[
g=\left(\begin{array}{cc}
(1_{r}-x)^{-1} & 0\\
-x(1_{r}-x)^{-1} & 1_{r}
\end{array}\right),\quad h=\diag((1_{r}-x)^{-1},1_{r},(1_{r}-x)^{-1},(1_{r}-x)^{-1})
\]
Note that $A(\al)=A(\sm(\al))$, since $\al_{1}$ and $\al_{2}$ are
fixed by $\sm$. Then we have 
\begin{align*}
\,_{2}\cF_{1}(a,b,c;x) & =A(\al)F(\bx,\al;C)\\
 & =A(\sm(\al))F(\bx P_{\sm},\sm(\al);C)\\
 & =A(\sm(\al))F(g\bx'h^{-1},\sm(\al);C)\\
 & =A(\sm(\al))(\det g)^{-r}\chi(h,\sm(\al))^{-1}F(\bx',\sm(\al);C').
\end{align*}
where $C'$ is obtained from $C$ as the image of the map
\begin{align*}
u & \mapsto(1_{r},u)g=((1_{r}-ux)(1_{r}-x)^{-1},u)\\
 & \mapsto(1_{r},(1_{r}-x)(1_{r}-ux)^{-1}u)\mapsto(1_{r}-x)(1-ux)^{-1}u.
\end{align*}
 \emph{We may have} $C'\sim C$, \emph{namely $C'$ and $C$ are homologous
in this case since $C'$ can be deformed continuously to $C$ as $x\to0$.}
(This statement is not obvious because of the lack of knowledge on
the homology group). Since $(\det g)^{-r}=\left(\det(1_{r}-x)\right)^{r}$,
we have 
\begin{align*}
\,_{2}\cF_{1}(a,b,c;x) & =A(\sm(\al))\left(\det(1_{r}-x)\right)^{\al_{0}+\al_{2}+\al_{3}+r}F(\bx',\sm(\al);C)\\
 & =\left(\det(1_{r}-x)\right)^{-\al_{1}-r}\,_{2}\cF_{1}(\al_{1}+r,-\al_{0},\al_{1}+\al{}_{2}+2r;x(x-1_{r})^{-1})\\
 & =\left(\det(1_{r}-x)\right)^{-a}\,_{2}\cF_{1}(a,c-b,c;x(x-1_{r})^{-1}).
\end{align*}

\emph{2) Case $\sm=(1,2)(0,3)$}. Put $z:=\bx P_{\sm}$ and normalize
it to the normal form by the action of $\GL{2r}\times H$:
\begin{align*}
z & =\left(\begin{array}{cccc}
1_{r} & 1_{r} & 0 & 1_{r}\\
-x & -1_{r} & 1_{r} & 0
\end{array}\right)\mapsto g^{-1}z=\left(\begin{array}{cccc}
1_{r} & 0 & -(x-1_{r})^{-1} & -(x-1_{r})^{-1}\\
0 & 1_{r} & (x-1_{r})^{-1} & (x-1_{r})^{-1}x
\end{array}\right)\\
 & \mapsto g^{-1}zh=\left(\begin{array}{cccc}
1_{r} & 0 & -(x-1_{r})^{-1}h_{2} & -(x-1_{r})^{-1}h_{3}\\
0 & 1_{r} & (x-1_{r})^{-1}h_{2} & (x-1_{r})^{-1}xh_{3}
\end{array}\right),
\end{align*}
where 
\[
g=\left(\begin{array}{cc}
1_{r} & 1_{r}\\
-x & -1_{r}
\end{array}\right)\in\GL{2r},\quad h=\diag(1_{r},1_{r},h_{2},h_{3})\in H.
\]
So noting $x$ commutes with $(x-1_{r})^{-1}$, we choose $h$ so
that $h_{2}=h_{3}=1_{r}-x$ to obtain 
\[
\bx'=\left(\begin{array}{cccc}
1_{r} & 0 & 1_{r} & 1_{r}\\
0 & 1_{r} & -1_{r} & -x'
\end{array}\right),\quad x'=x.
\]
 Note that $A(\al)=A(\sm(\al))$.
\begin{align*}
\,_{2}\cF_{1}(a,b,c;x) & =A(\al)F(\bx,\al;C)\\
 & =A(\sm(\al))F(\bx P_{\sm},\sm(\al);C)\\
 & =A(\sm(\al))F(g^{-1}\bx'h^{-1},\sm(\al);C)\\
 & =A(\sm(\al))(\det g)^{r}\chi(h,\sm(\al))^{-1}F(\bx',\sm(\al);C'),
\end{align*}
where $C'$ is obtained from $C$ as the image of the map
\begin{align*}
u & \mapsto(1_{r},u)g=(-(1-ux)(x-1_{r})^{-1},-(1_{r}-u)(x-1_{r})^{-1}).\\
 & \mapsto(1_{r},(x-1_{r})(1_{r}-ux)^{-1}(1_{r}-u)(x-1_{r})^{-1})\\
 & \mapsto(x-1_{r})(1_{r}-ux)^{-1}(1_{r}-u)(x-1_{r})^{-1}
\end{align*}
\emph{We see that $C'$ is homologous to $(-1)^{r}C$ since the above
transformation is deformed continuously to $u\mapsto1-u$ when $x$
moves continuously to $0$ }(This statement is not obvious)\emph{.}
Since $(\det g)^{r}=\left(\det(x-1_{r})\right)^{-r}$, we have
\begin{align*}
\,_{2}\cF_{1}(a,b,c;x) & =A(\sm(\al))(-1)^{r}\left(\det(x-1_{r})\right)^{-r}\left(\det(1_{r}-x)\right)^{-\al_{0}-\al_{1}}F(\bx',\sm(\al);C)\\
 & =\left(\det(1_{r}-x)\right)^{-\al_{0}-\al_{1}-r}\,_{2}\cF_{1}(\al_{2}+r,-\al_{0},\al_{1}+\al_{2}+2r;x)\\
 & =\left(\det(1_{r}-x)\right)^{c-a-b}\,_{2}\cF_{1}(c-a,c-b,c;x).
\end{align*}

\subsubsection{Transformation formula for Kummer's analogue}

The following is known as the Kummer's first transformation formula
for the classical Kummer's confluent HGF:
\[
_{1}F_{1}(a,c;x)=e^{x}\cdot{}_{1}F_{1}(c-a,c;-x).
\]
 We shall derive an analogous formula for its Hermitian matrix integral
analogue $\,_{1}\cF_{1}(a,c;x)$ defined by (\ref{eq:exa-9}) applying
Theorem \ref{thm:main-2} to the corresponding Radon HGF for the partition
$\lm=(2,1,1)$: 
\[
F(\bx,\al;C):=F_{(2,1,1)}(\bx,\al;C)=\int_{C}\etr(ux)(\det u)^{\al_{2}}(\det(1_{r}-u))^{\al_{3}}du,
\]
where
\[
\bx=\left(\begin{array}{cccc}
1_{r} & 0 & 0 & 1_{r}\\
0 & x & 1_{r} & -1_{r}
\end{array}\right),\quad\al=(\al_{0},1,\al_{2},\al_{3}).
\]
Recall that $\,_{1}\cF_{1}(a,c;x)$ and $F(\bx,\al;C)$ are related
as

\begin{equation}
\,_{1}\cF_{1}(a,c;x)=A(\al)F(\bx,\al;C)\label{eq:ex-7}
\end{equation}
 with $C=\{u\in\herm\mid u>0,1_{r}-u>0\}$ and 
\[
(a,c)=(\al_{2}+r,\al_{2}+\al_{3}+2r),\quad A(\al)=\frac{\G_{r}(\al_{2}+\al_{3}+2r)}{\G_{r}(\al_{2}+r)\G_{r}(\al_{3}+r)}.
\]
The finite group part $\cP$ of the Weyl group $W_{\lm}$ is isomorphic
to the permutation group $\S_{2}$ and is generated by the permutation
matrix 
\begin{equation}
P_{\sm}=\left(\begin{array}{cccc}
1_{r}\\
 & 1_{r}\\
 &  &  & 1_{r}\\
 &  & 1_{r}
\end{array}\right)\label{eq:ex-8}
\end{equation}
associated with the transposition $\sm=(2,3)$ which acts on $Z_{\lm}$
and on the weights $\al$ by $z\mapsto zP_{\sm}$ and $\al\mapsto\sm(\al)$,
respectively.
\begin{prop}
\label{prop:ex-kummer-1}For the Kummer's analogue (\ref{eq:exa-9}),
we have
\[
\,_{1}\cF_{1}(a,c;x)=\etr(x)\cdot\,_{1}\cF_{1}(c-a,c;-x).
\]
\end{prop}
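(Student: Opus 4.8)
The plan is to mirror the proof of Proposition \ref{prop:ex-gauss-2}, now applying Theorem \ref{thm:main-2}~(2) to the partition $\lm=(2,1,1)$ and to the nontrivial element $\sm=(2,3)\in\cP\simeq\S_{2}$, with permutation matrix $P_{\sm}$ as in (\ref{eq:ex-8}). Observe first that $\sm$ sends the normalized weight $\al=(\al_{0},1,\al_{2},\al_{3})$ to $\sm(\al)=(\al_{0},1,\al_{3},\al_{2})$, so the associated parameter $(a,c)=(\al_{2}+r,\al_{2}+\al_{3}+2r)$ becomes $(\al_{3}+r,\al_{2}+\al_{3}+2r)=(c-a,c)$, which is precisely the parameter shift appearing on the right-hand side; moreover $A(\al)=\G_{r}(\al_{2}+\al_{3}+2r)/(\G_{r}(\al_{2}+r)\G_{r}(\al_{3}+r))$ is symmetric in $\al_{2},\al_{3}$, so $A(\al)=A(\sm(\al))$. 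By Theorem \ref{thm:main-2}~(2), $F(\bx,\al;C)=F(\bx P_{\sm},\sm(\al);C)$, and the task is to transport $z:=\bx P_{\sm}$ back to the normal form $\bx'$ of Proposition \ref{prop:ex-2} by the $\GL{2r}\times H_{(2,1,1)}$-action and to collect the resulting factors via Proposition \ref{prop:covariance}.

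Carrying this out, $z=\bx P_{\sm}$ is obtained from $\bx$ by interchanging its third and fourth $2r\times r$ blocks, namely $z=\left(\begin{smallmatrix}1_{r}&0&1_{r}&0\\0&x&-1_{r}&1_{r}\end{smallmatrix}\right)$. A direct computation then shows that with
\[
g=\begin{pmatrix}1_{r}&1_{r}\\0&-1_{r}\end{pmatrix}\in\GL{2r},\qquad h=\diag\!\left(\begin{pmatrix}1_{r}&-x\\0&1_{r}\end{pmatrix},\,1_{r},\,1_{r}\right)\in H_{(2,1,1)},
\]
one has $g^{-1}zh=\bx'$, where $\bx'$ is the normal form with variable $x'=-x$. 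Since $g\in\GL{2r}$ and $h^{-1}\in H_{(2,1,1)}$, Proposition \ref{prop:covariance} gives
\[
F(z,\sm(\al);C)=F(g\bx'h^{-1},\sm(\al);C)=(\det g)^{-r}\,\chi(h;\sm(\al))^{-1}\,F(\bx',\sm(\al);C'),
\]
with $C'$ the image of $C$ under the affine substitution induced by $g$, which one reads off from $(1_{r},u)g=(1_{r},1_{r}-u)$ to be $u\mapsto1_{r}-u$. The character factor follows from $\chi_{(2,1,1)}(h;\be)=(\det h_{0})^{\be_{0}}\etr(\be_{1}h_{0}^{-1}h_{1})(\det h_{2})^{\be_{2}}(\det h_{3})^{\be_{3}}$: with $h_{0}=h_{2}=h_{3}=1_{r}$, $h_{1}=-x$ and $\be_{1}=1$ (the weight in position $1$ is unchanged by $\sm$) this gives $\chi(h;\sm(\al))=\etr(-x)$, hence $\chi(h;\sm(\al))^{-1}=\etr(x)$, while $\det g=(-1)^{r}$ so $(\det g)^{-r}=(-1)^{r^{2}}$.

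Finally, I would treat the cycle exactly as in the proof of Proposition \ref{prop:ex-gauss-2}: $C=\{u\in\herm\mid u>0,\ 1_{r}-u>0\}$ is carried onto itself as a set by $u\mapsto1_{r}-u$, and passing to the eigenvalue variables, where this map reverses each of the $r$ segments $\overrightarrow{0,1}$, one obtains $C'=(-1)^{r}C$. Combining this with $(\det g)^{-r}=(-1)^{r^{2}}$ leaves the sign $(-1)^{r^{2}+r}=(-1)^{r(r+1)}=1$, so $F(z,\sm(\al);C)=\etr(x)\,F(\bx',\sm(\al);C)$. Since $\bx'$ carries the variable $-x$ and $\sm(\al)$ corresponds to the parameter $(c-a,c)$, Proposition \ref{prop:ex-3} together with (\ref{eq:ex-7}) gives $A(\sm(\al))\,F(\bx',\sm(\al);C)={}_{1}\cF_{1}(c-a,c;-x)$; multiplying the chain of equalities through by $A(\al)=A(\sm(\al))$ then yields ${}_{1}\cF_{1}(a,c;x)=\etr(x)\,{}_{1}\cF_{1}(c-a,c;-x)$. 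The only delicate point is the identification $C'=(-1)^{r}C$, which, as the paper itself acknowledges in the Gauss case, rests on homology bookkeeping that is not completely formalized; however, since the substitution $u\mapsto1_{r}-u$ here is independent of $x$ and preserves the integration domain, this is no worse than in Proposition \ref{prop:ex-gauss-2}, and the real substance of the argument is the linear-algebra normalization of $z$ and the evaluation of $\chi(h;\sm(\al))$.
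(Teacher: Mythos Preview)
Your proof is correct and essentially identical to the paper's own argument: you use the same permutation $\sm=(2,3)$, the same normalizing pair $(g,h)$ with $g=\left(\begin{smallmatrix}1_{r}&1_{r}\\0&-1_{r}\end{smallmatrix}\right)$ and $h_{1}=-x$, the same identification of the induced substitution $u\mapsto1_{r}-u$ giving $C'=(-1)^{r}C$, and the same character evaluation $\chi(h;\sm(\al))^{-1}=\etr(x)$. The only cosmetic difference is that you spell out the sign cancellation $(-1)^{r^{2}+r}=1$ explicitly, whereas the paper absorbs it silently.
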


\begin{proof}
By Theorem \ref{thm:main-2}, we have
\[
F(\bx,\al;C)=F(\bx P_{\sm},\sm(\al);C)
\]
for $P_{\sm}$ given by (\ref{eq:ex-8}) and $\sm=(2,3)$. Put $z:=\bx P_{\sm}$
and normalize it by the action $\GL{2r}\times H_{(2,1,1)}$:
\begin{align*}
z & =\left(\begin{array}{cccc}
1_{r} & 0 & 1_{r} & 0\\
0 & x & -1_{r} & 1_{r}
\end{array}\right)\mapsto g^{-1}z=\left(\begin{array}{cccc}
1_{r} & x & 0 & 1_{r}\\
0 & -x & 1_{r} & -1_{r}
\end{array}\right)\\
 & \mapsto g^{-1}zh=\left(\begin{array}{cccc}
1_{r} & h_{1}+x & 0 & 1_{r}\\
0 & -x & 1_{r} & -1_{r}
\end{array}\right),
\end{align*}
where 
\begin{equation}
g=g^{-1}=\left(\begin{array}{cc}
1_{r} & 1_{r}\\
0 & -1_{r}
\end{array}\right)\in\GL{2r},\quad h=\left(\begin{array}{cc}
1_{r} & h_{1}\\
 & 1_{r}
\end{array}\right)\oplus\diag(1_{r},1_{r})\in H_{(2,1,1)}.\label{eq:ex-9}
\end{equation}
 So we take $h_{1}=-x$ and get
\[
\bx':=g^{-1}zh=\left(\begin{array}{cccc}
1_{r} & 0 & 0 & 1_{r}\\
0 & x' & 1_{r} & -1_{r}
\end{array}\right),\quad x'=-x.
\]
 Noting $A(\al)=A(\sm(\al))$, we see
\begin{align*}
\,_{1}\cF_{1}(a,c;x) & =A(\al)F(\bx,\al;C)\\
 & =A(\sm(\al))F(\bx P_{\sm},\sm(\al);C)\\
 & =A(\sm(\al))F(g\bx'h^{-1},\sm(\al);C)\\
 & =A(\sm(\al))(\det g)^{-r}\chi(h,\sm(\al))^{-1}F(\bx',\sm(\al);C').
\end{align*}
where $C'$ is obtained from $C$ as the image of the map $\mat(r)\ni u\mapsto1_{r}-u\in\mat(r).$
Since $C\subset\herm$, this map is considered from $\herm$ to itself.
Hence $C'=(-1)^{r}C$. Noting $\det g=(-1)^{r}$, we have 
\begin{align*}
\,_{1}\cF_{1}(a,c;x) & =A(\sm(\al))\etr(x)\cdot F(\bx',\sm(\al);C)\\
 & =\etr(x)\cdot\,_{1}\cF_{1}(\al_{3}+r,\al_{2}+\al_{3}+2r;-x)\\
 & =\etr(x)\cdot\,_{1}\cF_{1}(c-a,c;-x).
\end{align*}
\end{proof}

\end{document}